\newtheorem{thm}{Theorem}[section]
\newtheorem{lma}[thm]{Lemma}
\newtheorem{prop}[thm]{Proposition}
\theoremstyle{definition}
\newtheorem{defn}[thm]{Definition}
\newtheorem{rmk}[thm]{Remark}
\newtheorem{ex}[thm]{Example}
\newtheorem{Question}[thm]{Question}
\newcommand{\un}{\underline}
\newcommand{\md}{\underline{d}}
\newcommand{\mdeg}{\underline{\operatorname{deg}}}
\newcommand{\val}{\operatorname{val}}
\newcommand{\cO}{\mathcal{O}}
\newcommand{\Pic}{{\operatorname{{Pic}}}}
\newcommand{\pr}{\mathrm{pr}}
\newcommand{\Img}{\mathrm{Im}}
\newcommand{\indeg}{\mathrm{indeg}}
\newcommand{\GG}{\mathbb{G}}
\newcommand{\ud}{\underline{d}}
\title{On the rank of general linear series on stable curves} 
\author{Karl Christ}
\thanks{2020 \emph{Mathematics Subject Classification}. 14H51, 14H40, 14H20. \\  The author was supported by the Israel Science Foundation (grant No. 821/16) and by the Center for Advanced Studies at BGU}
\address[Christ]{$~^1$Department of Mathematics\\
	Ben-Gurion University of the Negev\\P.O.Box 653 \\Be'er Sheva\\ 84105\\  Israel and $~^2$Institute of Algebraic Geometry\\
	Leibniz University Hannover\\Welfengarten 1 \\30167 Han\-no\-ver\\  Germany }\email{kchrist@math.uni-hannover.de}
\begin{document}
	
\maketitle

\begin{abstract}
    We study the dimension of loci of special line bundles on stable curves and for a fixed semistable multidegree. In case of total degree $d = g - 1$, we characterize when the effective locus gives a Theta divisor. In case of degree $g - 2$ and $g$, we show that the locus is either empty or has the expected dimension. This leads to a new characterization of semistability in these degrees. In the remaining cases, we show that the special locus has codimension at least $2$. If the multidegree in addition is non-negative on each irreducible component of the curve, we show that the special locus contains an irrreducible component of expected dimension.
\end{abstract}

\section{Introduction}
If $X$ is a smooth curve of genus $g$, then a \emph{general} line bundle $L$ of degree $d$ satisfies \[h^0(X,L) = \max \left \{ 0, d - g + 1 \right \}.\] The locus of \emph{special} line bundles in $\Pic^d(X)$, those that have additional global sections, is empty for $d < 0$ and $d > 2g - 2$, and irreducible of dimension $d$ if $0 \leq d \leq g-1$ and irreducible of dimension $2g-2 -d$ if $g - 1 \leq d \leq 2g - 2$.

In this paper, we are interested in how this picture changes if $X$ is no longer assumed to be
a smooth curve, but allowed to have nodal singularities. More precisely, we assume $X$ to be a \emph{stable} curve. Stable curves give a well-understood compactification of the moduli space of smooth curves. In this way they provide important tools for understanding the geometry of the moduli space, as well as studying smooth curves via degeneration techniques.

Two basic properties remain as in the smooth case: First, the Riemann Roch theorem still holds, and hence also the lower bound $h^0(X,L) \geq d - g + 1$. And second, loci of special line bundles (or, more generally, any Brill Noether locus) can be realized as a degeneracy locus of a map between vector bundles. Such a locus is either empty, or each of its irreducible components has at least the expected dimension.

If $X$ is reducible, the similarities with the case of smooth curves do not go much further. More precisely, in this case also the degree $d$ Picard scheme $\Pic^d(X)$ is no longer irreducible. To be able to talk about generic behaviour, one thus needs to restrict to an irreducible component of $\Pic^d(X)$. We denote such an irreducible component by $\Pic^{\md}(X)$, parametrizing line bundles on $X$ of fixed \emph{multidegree} $\md$. A multidegree is a tuple of integers, one for each irreducible component $X_v$ of $X$, that prescribes the degree of the restriction of a line bundle to $X_v$. 

For almost all choices of multidegree $\md$ of fixed total degree $d$ none of the statements for smooth curves in the first paragraph remains true if $X$ is reducible. In fact, for any integer $r$ and fixed total degree $d$, there are only finitely many multidegrees $\md$ of total degree $d$ such that there exists a line bundle $L$ of multidegree $\md$ and with $h^0(X,L) \leq r$ -- whereas there are infinitely many multidegrees of fixed total degree $d$.

To remedy this, we focus in this paper on \emph{semistable multidegrees}, as introduced by Caporaso to construct a universal compactified Jacobian over the moduli space of stable curves \cite{Caporasocompactification}. See Definition~\ref{def:semistable}.  Line bundles with semistable multidegree are those that are slope semistable with respect to the dualizing sheaf $\omega_X$ of $X$ by \cite[\S 1]{Alexeev}.

Our choice of multidegrees is motivated by two previous results. First, as in the case of smooth curves, outside of the range $0 \leq d \leq 2g - 2$ \emph{every} line bundle with semistable multidegree is non-special by \cite[Theorem 2.3]{caporasosemistable}. And second, the case of $d = g-1$ allows for a theory of Theta divisors, as we explain below. Even more, one can characterize semistability for degree $g - 1$ in these terms. We note however, that in other regards there are better-behaved classes of multidegrees, and there seems to be no overall best-behaved choice. See for example \cite{CLM} and \cite{chr2} for the question of an upper bound on $h^0(X,L)$ for \emph{all} line bundles $L$ of fixed multidegree. 

\subsection{Results}
Suppose $\md$ is a multidegree of total degree $d \leq g - 1$. Then the expected dimension of the effective locus \[W_{\md}(X) \coloneqq \left\{[L] \in \Pic^{\md}(X) \mid h^0(X,L) \geq 1\right\}\] is $d$. In particular, if $d = g-1$, $W_{\md}(X)$ is expected to be a divisor in $\Pic^{\md}(X)$, called the \emph{Theta divisor}. 

Since $W_{\md}(X)$ can be realized as a degeneracy locus, there are three options for the actual dimension of $W_{\md}(X)$ in degree $g -1$: $W_{\md}(X)$ is either empty, a divisor in $\Pic^{\md}(X)$, or all of $\Pic^{\md}(X)$. Thus to show that $W_{\md}(X)$ is a divisor, it suffices to show that there exist both an effective and a non-effective line bundle of multidegree $\md$. For the latter, a very pleasing answer was found by Beauville \cite[Lemma 2.1]{Beauville}. Namely, a multidegree $\md$ of total degree $g - 1$ is semistable if and only if there exists a non-effective line bundle of multidegree $\md$.

In this paper, we first settle the remaining part of the question, namely when the effective locus $W_{\md}(X)$ is empty. It turns out, that $W_{\md}(X)$ indeed can be empty, even if $\md$ is semistable of degree $g - 1$. Example~\ref{ex:theta_empty} gives instances for arbitrary $g \geq 2$. Before we can characterize when this happens, we need to recall one particularly convenient combinatorial description of semistable multidegrees of total degree $g - 1$.

We denote by $\GG_X$ the dual graph of $X$. To an orientation $O$ of the edges of $\GG_X$, one can associate a multidegree $\md_O$. Namely, the value of $\md_O$ on an irreducible component $X_v$ of $X$ is given by \[\mathrm{indeg}_O(v) - 1 + g_v.\] Here $v$ is the vertex of $\GG_X$ corresponding to $X_v$, $\mathrm{indeg}_O(v)$ denotes the number of incoming edges at $v$ in the orientation $O$, and $g_v$ denotes the geometric genus of $X_v$. As observed in \cite{Beauville} and \cite{Alexeev}, a multidegree $\md$ is semistable of total degree $g - 1$ if and only if $\md = \md_O$ for some orientation $O$. As a final ingredient, recall that a directed cycle in an orientation $O$ is a cycle in which every vertex is adjacent to one incoming and one outgoing edge of the cycle.

\begin{thm}\label{thm:main1}
Let $X$ be a stable curve and $\md$ a multidegree of total degree $g - 1$. Then the effective locus $W_{\md}(X) \subset \Pic^{\md}(X)$ has codimension $1$ if and only if $\md$ is semistable and either $X$ contains an irreducible component that is not rational, or the orientation giving $\md$ contains a directed cycle. 
\end{thm}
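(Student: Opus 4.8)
The plan is to reduce the statement to the two already-quoted facts: by Beauville's criterion, $\md$ is semistable of total degree $g-1$ if and only if there exists a non-effective line bundle of multidegree $\md$, and since $W_{\md}(X)$ is a degeneracy locus it is a divisor exactly when it is neither empty nor all of $\Pic^{\md}(X)$. The ``only if'' direction of semistability is therefore immediate: if $\md$ is not semistable, then every line bundle of multidegree $\md$ is effective, so $W_{\md}(X) = \Pic^{\md}(X)$ and no Theta divisor exists. Thus I may assume throughout that $\md = \md_O$ for some orientation $O$, and the entire content of the theorem becomes the claim that $W_{\md}(X) \neq \Pic^{\md}(X)$ (automatic from Beauville) together with the nonemptiness criterion: $W_{\md_O}(X)$ is nonempty precisely when $X$ has a non-rational component or $O$ contains a directed cycle.

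First I would reduce to a purely combinatorial and component-wise problem. Write $\md_O(v) = \indeg_O(v) - 1 + g_v$ and recall that $\deg_{X_v} L = \md_O(v)$ for $L \in \Pic^{\md_O}(X)$. The strategy for producing an effective line bundle is to build a global section, which amounts to choosing compatible sections on each component that agree at the nodes. If some component $X_v$ has $g_v \geq 1$, then on $X_v$ one has extra room: a general line bundle of the relevant degree already admits sections, and I expect to glue these to a global section regardless of the orientation, yielding $W_{\md_O}(X) \neq \emptyset$. This handles the ``$X$ contains a non-rational component'' case. When all components are rational ($g_v = 0$ for all $v$), the existence of a global section becomes a delicate compatibility question at the nodes, and I expect this to reduce to a statement about the orientation $O$: a section must be constant (degree considerations on $\PP^1$) in a way that propagates along the edges, and the obstruction to finding a nonzero such section is governed by directed cycles.

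The technical heart, and the main obstacle, is the totally-rational case. Here I would argue that for $X$ a tree-like or cycle-containing configuration of $\PP^1$'s, a nonzero global section of a line bundle of multidegree $\md_O$ exists if and only if $O$ has a directed cycle. The natural approach is to analyze how sections propagate: orient the components by $O$, and observe that the ``incoming'' data at each vertex constrains the section. In the acyclic case, one can order the vertices topologically and show inductively that any global section must vanish, forcing $W_{\md_O}(X) = \emptyset$; in the presence of a directed cycle, I would produce an explicit nonzero section supported on the cycle (for instance, taking the line bundle to be trivial along the cycle and using the cyclic structure to close up a nonvanishing section), giving nonemptiness. Making the inductive vanishing argument precise — correctly tracking the degrees on each $\PP^1$, the gluing conditions at nodes of valence greater than two, and the interaction with any genus-zero components carrying non-trivial restricted degree — is where the care is needed. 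I would likely phrase this via the normalization $\XN$ and the exact sequence relating sections on $X$ to sections on $\XN$ together with the node-compatibility conditions, so that the existence of a section becomes the existence of a nonzero solution to a linear system whose solvability is controlled exactly by the combinatorics of $O$.

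Finally, I would assemble the two cases: a Theta divisor exists iff $\md$ is semistable (Beauville, giving non-effectivity, i.e. $W_{\md}(X) \neq \Pic^{\md}(X)$) and $W_{\md}(X) \neq \emptyset$, and the nonemptiness holds iff some component is non-rational or $O$ has a directed cycle. Conversely, if $X$ is totally rational and $O$ is acyclic, then every line bundle of multidegree $\md_O$ is non-effective, so $W_{\md_O}(X) = \emptyset$ and there is no Theta divisor, completing the equivalence. The cleanest packaging is probably to isolate the combinatorial nonemptiness statement as a separate lemma about acyclic orientations and rational curves, and then let the theorem follow formally from that lemma plus Beauville's result.
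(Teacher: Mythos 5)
Your top-level architecture coincides with the paper's: Beauville's criterion handles $W_{\md}(X) \neq \Pic^{\md}(X)$, the degeneracy-locus trichotomy reduces the theorem to a nonemptiness criterion for $W_{\md_O}(X)$, and your inductive vanishing argument in the acyclic all-rational case (topological order, forced vanishing propagating from sources) is exactly the paper's induction for the emptiness direction. The genuine gap is in the nonemptiness direction, and both of your constructions fail as stated, for the same reason: a global section must vanish identically on every component where $L$ has negative degree, and hence must vanish at \emph{all} nodes joining your proposed support to such components; these forced vanishing conditions can exceed the available degree on the support. Concretely: (a) if the non-rational component $X_v$ is a source of $O$, then $\deg L|_{X_v} = g_v - 1$, and a \emph{general} line bundle of degree $g_v - 1$ on a genus-$g_v$ curve has $h^0 = 0$, so the first step of your gluing already fails, and the gluing itself ignores the cascade of vanishing conditions coming from negative-degree components. (b) For the cycle construction, take $\GG_X$ with weightless vertices $s, v, w, u$ and edges $v \to w$, $w \to v$ (the directed cycle), three edges $s \to v$, two edges $v \to u$, one edge $w \to u$. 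This is stable of genus $5$, $O$ has a directed cycle, and $\md_O = (-1,3,0,2)$. Any section vanishes identically on $X_s$, so a section supported on the cycle $X_v \cup X_w$ must vanish at the three nodes $X_s \cap X_v$ and the two nodes $X_v \cap X_u$: five conditions on a degree-$3$ bundle on $X_v \cong \PP^1$, forcing it to vanish on $X_v$ and then on $X_w$. So no nonzero section supported on the cycle exists, although $W_{\md_O}(X) \neq \emptyset$.

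The missing idea that repairs both cases is to enlarge the support to a subcurve whose boundary cut is directed \emph{inward}: let $Y$ be the union of all components reachable by directed paths in $O$ from the directed cycle (respectively, from the non-rational component). Then every edge between $\GG_Y$ and its complement is directed into $\GG_Y$, so the multidegree of $L|_Y(-Y \cap Y^c)$ equals $\md_{O|_{\GG_Y}}$, and this multidegree is effective on $Y$ (every vertex of $\GG_Y$ either has an incoming edge inside $\GG_Y$ or has $g_v \geq 1$). A generic effective line bundle of this multidegree on $Y$, twisted up by $Y \cap Y^c$ and extended arbitrarily to $X$, then admits a nonzero section vanishing identically on $Y^c$. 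The paper's proof is the mirror image of this: an induction on the number of non-loop edges in which one peels off a weightless source $v$ with $\md_v < 0$, identifies $h^0(X,L)$ with $h^0\left(X_v^c, L|_{X_v^c}(-X_v \cap X_v^c)\right)$, and observes that the restricted multidegree is again orientable with the same directed cycles; the induction terminates either at an effective multidegree (some $g_w \geq 1$ or a surviving cycle, giving nonemptiness) or at the acyclic all-rational situation (giving emptiness). Either way, it is precisely the inward-directed cut and the resulting effectivity of the restricted orientation multidegree that your two constructions are missing.
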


See Theorem~\ref{thm:theta divisor}. One can view this as a small correction to \cite[Proposition 2.2]{Beauville}, in which it is claimed that $\md$ is semistable if and only if a Theta divisor exists. The possibility of an empty effective locus however is not considered. 

The existence of a Theta divisor is a foundational result in the theory of compactified Jacobians and generalized Prym varieties, since a Theta divisor allows to define a canonical polarization on these varieties. See, for example, \cite{Alexeev}, \cite{caporasotheta}, \cite{CV}, and \cite{Beauville}, \cite{ABH},  \cite{MGHL}). In these cases, the multidegrees $\md$ do satisfy the second condition of Theorem~\ref{thm:main1} and no issue arises. See Remark~\ref{rmk:correction old statement} for a detailed discussion.

\medskip

We use Theorem~\ref{thm:main1} and combinatorics of semistable multidegrees to obtain new results for semistable multidegrees of total degrees other than $g - 1$. Recall that a multidegree $\md$ itself is called \emph{effective}, if it is non-negative on each irreducible component of $X$.

\begin{thm} \label{thm:main2}
Let $X$ be a stable curve and $\md$ a semistable multidegree of total degree $d \leq g - 2$. Then each irreducible component of the effective locus $W_{\md}(X)$ has dimension at most $g - 2$.
If $\md$ is in addition effective, then the effective locus  $W_{\md}(X)$ contains an irreducible component of expected dimension, $d$. 
\end{thm}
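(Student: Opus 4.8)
The plan is to treat the two assertions separately, in both cases transporting the problem to the degree $g-1$ Theta-divisor theory by twisting with effective divisors. For the upper bound I would first dispose of the case that $X$ is irreducible: there the Abel map $\operatorname{Sym}^d(X)\to\Pic^{\md}(X)$ dominates $W_{\md}(X)$, so $\dim W_{\md}(X)\le d\le g-2$ immediately. Assume now $X$ reducible and $W_{\md}(X)\neq\emptyset$. The first step is a combinatorial completion lemma: since $\md$ is semistable of total degree $d\le g-2$, there is a semistable multidegree $\md_O$ of total degree $g-1$ with $\md_O\ge\md$ componentwise, so that $\md_O-\md$ is effective of total degree $g-1-d\ge 1$. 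Choosing an effective divisor $D$ of multidegree $\md_O-\md$ supported on the smooth locus, the twist $t_D\colon L\mapsto L(D)$ is an isomorphism $\Pic^{\md}(X)\to\Pic^{\md_O}(X)$ carrying $W_{\md}(X)$ into $W_{\md_O}(X)$, since $H^0(X,L)\hookrightarrow H^0(X,L(D))$. As $W_{\md_O}(X)$ then contains a nonempty image it is nonempty; being a degeneracy locus that is not all of $\Pic^{\md_O}(X)$ by Beauville's lemma, it is a Theta divisor, of dimension $g-1$.

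The extra unit of codimension must come from a \emph{second} Theta-divisor constraint, and producing it is the main obstacle; I would split according to the dichotomy of Theorem~\ref{thm:main1}. If $X$ has a non-rational component $X_{v_0}$ on which $\md_O-\md$ is positive, I would vary $D$ within its multidegree by moving points on $X_{v_0}$, producing $D,D'$ with $T\coloneqq\mathcal{O}(D-D')$ a nonzero element of $\Jac(X_{v_0})\subseteq\Pic^0(X)$; then $t_D(W_{\md}(X))$ lies in $W_{\md_O}(X)\cap\bigl(W_{\md_O}(X)\otimes T\bigr)$. Because $\Pic^{\md_O}(X)$ is a single torsor under $\Pic^0(X)$, a proper nonempty divisor cannot be invariant under all of $\Pic^0(X)$, so its translation stabilizer is a proper closed subgroup, and as $T$ ranges over the positive-dimensional family $\Jac(X_{v_0})$ a general choice escapes it, giving $W_{\md_O}(X)\otimes T\neq W_{\md_O}(X)$; two distinct divisors meet in dimension $\le g-2$. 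If instead every component is rational, I would use the directed-cycle alternative of Theorem~\ref{thm:main1}: either the relevant degree $g-1$ Theta divisors are empty, forcing $W_{\md}(X)=\emptyset$, or reorienting along a directed cycle yields two distinct semistable multidegrees $\md_{O_1}\neq\md_{O_2}$ of degree $g-1$ dominating $\md$, and the associated twists place $t(W_{\md}(X))$ inside the intersection of $W_{\md_{O_1}}(X)$ with a translate of $W_{\md_{O_2}}(X)$. These two divisors are distinct, which I would confirm by exhibiting a line bundle effective for one completion but not the other, built from Beauville's non-effective bundles. Verifying the completion lemma and this distinctness is where the real work lies.

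For the second assertion, assume $\md$ is effective and consider the Abel map $\alpha\colon\prod_v\operatorname{Sym}^{\md(v)}(X_v)\to\Pic^{\md}(X)$, sending a tuple of effective divisors on the smooth loci to $\mathcal{O}_X(D)$; its image lies in $W_{\md}(X)$ and the source has dimension $d$. The crux is the lemma that a general $D$ in the source satisfies $h^0(X,\mathcal{O}_X(D))=1$, which is precisely where semistability enters: for concentrated multidegrees the forced sections on low-degree components make $h^0$ jump, and semistability is what prevents this. By upper semicontinuity it suffices to exhibit a single effective divisor of multidegree $\md$ with $h^0=1$, which I would obtain by induction on $d$, starting from $\mathcal{O}_X$ for $\md=0$ and adjoining general points while using semistability to keep the sections from accumulating. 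Granting the lemma, the fiber of $\alpha$ over a bundle with $h^0=1$ is $\PP H^0(X,\mathcal{O}_X(D))$, a single point, so $\alpha$ is generically injective and the closure of its image is irreducible of dimension exactly $d$ inside $W_{\md}(X)$. Finally, at a general such $[L]$ with $h^0(X,L)=1$ the locus $W_{\md}(X)$ is smooth of dimension $d$ by the usual first-order analysis of the degeneracy locus inside the smooth variety $\Pic^{\md}(X)$ (the analogue of the Riemann singularity theorem), so this image is an entire irreducible component of expected dimension $d$. Here the $h^0=1$ lemma is the main obstacle; the smoothness at such points is routine.
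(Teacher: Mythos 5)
Your reduction to degree $g-1$ via a completion lemma is a correct first step (it is Lemma~\ref{lma: semistable plus effective} of the paper, and does require the work you defer), but the mechanism you propose for the second unit of codimension fails. The theta divisor $W_{\md_O}(X)$ is in general \emph{reducible}, with irreducible components $W_{\md_O,Y}(X)$ indexed by certain subcurves $Y$ (Proposition~\ref{prop:coelhoesteves}), and membership in $W_{\md_O,Y}(X)$ depends only on the restriction of the line bundle to $Y$. Hence $W_{\md_O,Y}(X)\otimes T=W_{\md_O,Y}(X)$ for every $T$ with $T|_Y\cong\mathcal O_Y$, in particular for every $T\in\Jac(X_{v_0})$ with $X_{v_0}\subset Y^c$. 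So even when $W_{\md_O}(X)\otimes T\neq W_{\md_O}(X)$ as sets, the two unions can share irreducible components, and if $t_D(W_{\md}(X))$ lies in such a shared component the intersection has dimension $g-1$ and your argument yields nothing; the stabilizer argument sees the whole divisor, not its components. Your case split also has a hole ($X$ may have non-rational components while $\md_O-\md$ is positive only on rational ones), and the all-rational branch is simply false: reversing the edges of a directed cycle changes no indegree, so $\md_{O_1}=\md_{O_2}$ and no second theta divisor is produced. This containment-in-an-invariant-component problem is exactly what the paper's route through degree $g-2$ solves: for $\md$ semistable of total degree $g-2$ and any vertex $v$, Lemma~\ref{lma: g-2 to g-1} and Remark~\ref{rmk:partial orientation} force $X_v\subset Y$ for every component $W_{\md+v,Y}(X)$ that can contain the image of the twist by a point $p\in X_v$, and then Lemma~\ref{lma:coelhoesteves} (a general $L'$ in that component has $h^0=1$ and $p$ not a base point) shows the image is a \emph{proper} closed subset of an irreducible $(g-1)$-dimensional variety (Proposition~\ref{prop:dimension deg g - 2}); for $d<g-2$ one then twists into degree $g-2$, not $g-1$.

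For the second claim your outline (generic $h^0=1$ on the Abel image, then a Petri/tangent-space bound plus Proposition~\ref{prop:determinantal}) is a genuinely different route from the paper's and could be completed, but the crux lemma is left unproved, and the proposed induction has no mechanism: intermediate multidegrees obtained by adding one point at a time need not be semistable, and semistable multidegrees of different total degrees do not nest (Example~\ref{ex:semistable plus effective}), so ``using semistability at each step'' has no inductive content. The efficient proof of your lemma again goes through degree $g-1$: complete $\md$ to a semistable effective $\md''$ of total degree $g-1$ (Lemma~\ref{lma: semistable plus effective}); then $A_{\md''}(X)$ is an irreducible component of the theta divisor whose general member $\mathcal O_X\bigl(\sum p_i+\sum q_j\bigr)$ has $h^0=1$ (Proposition~\ref{prop:coelhoesteves}, Lemma~\ref{lma:coelhoesteves}), whence $h^0\bigl(X,\mathcal O_X\bigl(\sum p_i\bigr)\bigr)=1$ for a general tuple of multidegree $\md$, by monotonicity of $h^0$ under adding effective divisors. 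The paper itself avoids any first-order analysis: it maps $W\times A_{\un e'}(X)$ into the degree $g-1$ theta divisor, shows $\dim A_{\un e'}(X)=g-1-d$ via Lemma~\ref{lma:dimension Abel map}, and bounds $\dim W\le d$ by an intersection-dimension count inside that divisor. So your second half is salvageable with the missing lemma supplied as above; the first half contains the substantive errors.
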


See Theorem~\ref{thm:general effective loci}. It is not hard to see, that a line bundle $L$ is semistable or special, if and only if its residual $\omega_X \otimes L^{-1}$ is so (cf. Remark~\ref{rmk:residual}). Hence Theorem~\ref{thm:main2} also gives the analogous statements for special loci in case $d \geq g$.

The irreducible component of expected dimension in the last statement of Theorem~\ref{thm:main2} is the image $A_{\md}(X)$ of the rational Abel map $\alpha_{\md}$. This map is defined whenever $\md$ is effective, as follows:
\[
\alpha_{\md}\colon \prod_{v} \left ( X_v^{\mathrm{sm}} \right)^{\md_v} \to \Pic^{\md}(X), \, (p_1, \dots, p_d) \mapsto \mathcal O_X\left(p_1 + \ldots + p_d \right).
\]
Here $X_v^{\mathrm{sm}}$ denotes the intersection of $X_v$ with the smooth locus of $X$ and $\md_v$ the value of the multidegree $\md$ on $X_v$. In general, the dimension of the image $A_{\md}(X)$ can be smaller than $d$ even if $d \leq g$, and $A_{\md}(X)$ need not be an irreducible component of the effective locus. See Remark~\ref{rmk:abel}. The claim of the theorem is, that both these properties however do hold when $\md$ is semistable.

Notice next, that Theorem~\ref{thm:main2} shows that the effective locus is either empty or has the expected dimension, when $\md$ is semistable of total degree $d = g - 2$. As opposed to the  case $d = g - 1$, this no longer characterizes semistability. See Examples~\ref{ex:g-2 1} and \ref{ex:g-2 2}. On the other hand, we obtain a new characterization of semistability in degree $g - 2$ in Lemma~\ref{lma:char deg g-2}. Since it allows for a more elegant formulation, we state here the residual version for total degree $g$, as in Theorem~\ref{thm:char deg g}.

\begin{thm} \label{thm:dominant abel}
Let $X$ be a stable curve and $\md$ a multidegree of total degree $g$. Then $\md$ is semistable if and only if $\md$ is effective and the rational Abel map $\alpha_{\md}$ is dominant.   
\end{thm}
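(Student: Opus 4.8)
The plan is to treat the two implications separately, after two structural reductions that make both tractable: effectivity is forced by semistability, and dominance can be read off from a single cohomological condition. First I would show that a semistable multidegree $\md$ of total degree $g$ is automatically effective, so that $\alpha_{\md}$ is even defined. Applying Caporaso's basic inequality (Definition~\ref{def:semistable}) to the one-term subcurve $X_v$ gives
\[
\md_v \;\geq\; \frac{g\,(2g_v-2+k_v)}{2g-2}-\frac{k_v}{2}\;=\;\frac{g(g_v-1)}{g-1}+\frac{k_v}{2(g-1)},
\]
where $k_v$ is the number of nodes on $X_v$. When $g_v\geq 1$ the right-hand side is positive; when $g_v=0$ stability forces $k_v\geq 3$, and then a short computation shows the right-hand side is $>-1$. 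In either case the integer $\md_v$ is $\geq 0$, so $\md$ is effective.

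Second, I would reformulate dominance cohomologically. Since $\dim\Pic^{\md}(X)=g$ equals the dimension of the source $\prod_v(X_v^{\mathrm{sm}})^{\md_v}$, the map $\alpha_{\md}$ is dominant iff its differential attains rank $g$. For a reduced divisor $D$ supported on the smooth locus the usual computation gives $\operatorname{rank}d\alpha_D=g-h^1(X,\mathcal O_X(D))$, so
\[
\alpha_{\md}\text{ is dominant}\iff \exists\,D\in\textstyle\prod_v(X_v^{\mathrm{sm}})^{\md_v}\text{ with }h^1(X,\mathcal O_X(D))=0,
\]
equivalently, by Riemann--Roch in degree $g$, with $h^0(X,\mathcal O_X(D))=1$. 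Residually this says that $\omega_X\otimes\mathcal O_X(-D)$, whose residual multidegree has total degree $g-2$, has no sections.

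For the forward implication I would combine this with Theorem~\ref{thm:general effective loci}. By Lemma~\ref{lma:semistable residual} the residual multidegree $\md'$ of total degree $g-2$ is again semistable, so Theorem~\ref{thm:general effective loci} bounds every component of $W_{\md'}(X)$ by dimension $g-2$. Transporting this across the residuation isomorphism $\Pic^{\md}(X)\to\Pic^{\md'}(X)$, $L\mapsto\omega_X\otimes L^{-1}$, the locus $\{L:h^0(L)\geq 2\}$ has dimension $\leq g-2$, so a general $L$ of multidegree $\md$ has $h^0(L)=1$. It then remains to realize one such non-special class by an honest effective divisor on the smooth locus. I would do this by adding general points one at a time: passing from $\mathcal O_X(D)$ to $\mathcal O_X(D+p)$ drops $h^1$ by one exactly when some section of the residual system $\omega_X\otimes\mathcal O_X(-D)$ does not vanish identically on the component carrying $p$. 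The content is that semistability of $\md$ lets one always spend the budget $\md_v$ on components outside the base locus of the residual system until $h^1$ reaches $0$; this bookkeeping is the main obstacle, and is where the combinatorics of semistable multidegrees (as in Lemma~\ref{lma:char deg g-2}) does the real work.

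For the converse I would argue by contraposition: assuming $\md$ is effective but not semistable, I would produce, for \emph{every} smooth-supported divisor $D$ of multidegree $\md$, a nonzero section of $\omega_X\otimes\mathcal O_X(-D)$, forcing $h^1(\mathcal O_X(D))>0$ always, so that by the reformulation above $\alpha_{\md}$ misses the dense non-special locus. Failure of the basic inequality yields a connected subcurve $Y$ with $\md_Y$ below its lower threshold; replacing $Y$ by its complement if necessary, the residual degree on $Y$ is correspondingly large, and sections of $\omega_X\otimes\mathcal O_X(-D)$ supported on $Y$ and vanishing at the $k_Y$ attaching nodes inject into $H^0(\omega_X\otimes\mathcal O_X(-D))$; Riemann--Roch on $Y$ then lower-bounds this space. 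The delicate point is that a naive Riemann--Roch count only matches the threshold in the rational case, so in general I expect to need the sharper control furnished by Theorem~\ref{thm:general effective loci} and Lemma~\ref{lma:char deg g-2} rather than a crude estimate. With both implications in hand, the stated equivalence follows.
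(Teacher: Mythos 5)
Your two preliminary reductions are sound and non-circular (effectivity follows from the basic inequality applied to each $X_v$, and your differential criterion for dominance is the paper's Lemma~\ref{lma:dimension Abel map} in another guise), and your use of the degree-$(g-2)$ dimension bound (Proposition~\ref{prop:dimension deg g - 2}, transported by residuation) to show that a \emph{general} $[L]\in\Pic^{\md}(X)$ has $h^0(X,L)=1$ matches the paper. The genuine gap is exactly the step you flag as ``the main obstacle'': density of the non-special locus in $\Pic^{\md}(X)$ does not show that the Abel image $A_{\md}(X)$ meets it. A priori $A_{\md}(X)$ could be a low-dimensional set entirely contained in $W^1_{\md}(X)$, and Remark~\ref{rmk:abel} shows this really happens for effective non-semistable multidegrees; so this step is the heart of the theorem, not bookkeeping. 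Your greedy point-adding scheme is never carried out, and citing Lemma~\ref{lma:char deg g-2} does not close it: that lemma (in residual form) produces an abstract line bundle with $h^0=1$ whose section does not vanish on any component, but the section may still vanish at nodes, so the bundle need not lie in $A_{\md}(X)$. The paper closes the gap by a different, translate argument: pick $v$ with $\md - v$ effective; then $\md-v$ is semistable of degree $g-1$ by Lemma~\ref{lma: g-2 to g-1}, so $A_{\md-v}(X)$ is an irreducible component of the Theta divisor by Proposition~\ref{prop:coelhoesteves}, of dimension $g-1$ by Theorem~\ref{thm:theta divisor}; since $\varphi_{p_v}\left(A_{\md-v}(X)\right)\subset A_{\md}(X)$, the Abel image has dimension at least $g-1 > g-2 \geq \dim W^1_{\md}(X)$, hence escapes the special locus, and Lemma~\ref{lma:dimension Abel map} gives dominance.

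Your converse is also left incomplete, but it is salvageable more easily than you think, and without the ``sharper control'' you invoke (Theorem~\ref{thm:general effective loci} concerns only semistable multidegrees, so it cannot serve in the contrapositive). If $\md$ is effective of degree $g$ but not semistable, there is a connected proper subcurve $Y$ with $\md_Y\leq g(Y)-1$. For any smooth-supported $D$ of multidegree $\md$, sections of $\omega_X\otimes\mathcal O_X(-D)$ vanishing identically on $Y^c$ and at the attaching nodes form $H^0\left(Y,\omega_Y(-D|_Y)\right)$, and Serre duality on $Y$ gives $h^1\left(Y,\omega_Y(-D|_Y)\right)=h^0\left(Y,\mathcal O_Y(D|_Y)\right)\geq 1$ because $D|_Y$ is effective; hence $h^0\left(Y,\omega_Y(-D|_Y)\right)= g(Y)-1-\md_Y+h^0\left(Y,\mathcal O_Y(D|_Y)\right)\geq g(Y)-\md_Y\geq 1$. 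This is the ``extra $+1$'' your naive Riemann--Roch count was missing, and it shows $h^1(X,\mathcal O_X(D))\geq 1$ for every such $D$, so $\alpha_{\md}$ is not dominant. Alternatively, as in the paper: a smooth-supported $D$ with $h^0(X,\mathcal O_X(D))=1$ has a section vanishing on no component, and then Lemma~\ref{lma:RR base point} together with the residual of Lemma~\ref{lma:char deg g-2} yields semistability directly.
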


As mentioned above, it is well-known that semistability in degree $g - 1$ shows many surprising interactions, among others with combinatorics and the existence of Theta divisors. In addition to \cite{CC} and \cite{CPS}, Theorem~\ref{thm:dominant abel} provides further evidence that semistability in degree $g$ exhibits similarly special behaviour.

Finally, for $d < g- 2$ the effective locus can have larger than expected dimension, even if $\md$ is semistable. This can be the case already for $d = g - 3$ as in Example~\ref{ex: g-3}. 

\subsection{Structure of the paper} In Section~\ref{sec: preliminaries}, we fix some notation. In Section~\ref{sec:semistable}, we recall basic properties of semistable and special line bundles. In Section~\ref{sec:theta}, we discuss the case $d = g -1$ and the existence of Theta divisors. To do so, we recall the description of semistability in terms of orientations in Subsection~\ref{subsec:orientations}. After establishing existence of Theta divisors in Subsection~\ref{subsec:theta}, we recall a  description of the irreducible components of Theta divisors due to Coelho and Esteves~\cite{coelhoesteves} in Subsection~\ref{subsec:abel}. In Section~\ref{sec:g-2}, we discuss the case $d = g- 2$ and its residual case $d = g$. We show that in these cases the special loci have expected dimension if $\md$ is semistable in Subsection~\ref{subsec:dimension g} and characterize semistability in Subsection~\ref{subsec:semistability g}. Finally, we discuss in Section~\ref{sec:general} the general case and prove Theorem~\ref{thm:main2}.
\medskip

\noindent 
{\bf Acknowledgements.} Many discussions with Lucia Caporaso and Ilya Tyomkin helped shape this paper, and I am very grateful for the insights and suggestions they provided. In addition, I would like to thank Sam Payne and an anonymous referee, whose comments on an earlier draft led to many improvements in presentation.

\section{Notations and Conventions} \label{sec: preliminaries}

Throughout the paper, we work over an algebraically closed field $k$ of characteristic $0$. We consider a curve $X$ over $k$, which we will always assume to be reduced with nodal singularities. If not specified otherwise, we will assume $X$ to be connected.

We denote the \emph{dual graph} of $X$ by $\GG_X$. That is, $\GG_X$ contains a vertex $v$ for every irreducible component $X_v$ of $X$; an edge between vertices $v$ and $w$ for each node in $X_v \cap X_w$, possibly with $v = w$; and each vertex $v$ is assigned the weight $g_v$ given by the geometric genus of $X_v$. In particular, $\GG_X$ may contain multiple edges between the same two vertices, as well as loop edges. We denote by $V(\GG_X)$ and $E(\GG_X)$ the sets of vertices and edges of $\GG_X$, respectively. 

The \emph{genus} of $\GG_X$ is defined as 
\begin{equation}\label{eq:def genus}
    g(\GG_X) = 1 - \chi(\GG_X) + \sum_{v \in V(\GG_X)} g_v,
\end{equation}
where $\chi(\GG_X)$ is the Euler characteristic of $\GG_X$. That is, $1 - \chi(\GG_X) = 1 - |V(\GG_X)| + |E(\GG_X)|$.  The genus of $\GG_X$ equals the arithmetic genus $g(X)$ of $X$. We write $g \vcentcolon= g(X) = g(\GG_X)$ if $X$ is clear from the context. The \emph{valence} $\val(v)$ of a vertex $v$ is the number of edges adjacent to $v$ with loops counted twice.

For a subcurve $Y \subset X$ we write $Y^c = \overline{X \setminus Y}$
for the closure of the complement in $X$. In particular, $Y \cap Y^c$ is a finite union of nodes. Any subcurve $Y \subset X$ corresponds to an \emph{induced subgraph} $\GG_Y$ of $\GG_X$, that is, a subgraph that contains all edges of $\GG_X$ between vertices contained in $\GG_Y$. We denote by $\GG_Y^c$ the dual graph of $Y^c$.  

We denote by $\omega_X$ the \emph{dualizing sheaf} of $X$. It has total degree $2g - 2$. The restriction of $\omega_X$ to a subcurve $Y$ of $X$ has total degree $2 g(Y) - 2 + |Y \cap Y^c|$. In particular, the restriction of $\omega_X$ to an irreducible component $X_v$ has degree $2g_v - 2 + \val(v)$.
A curve $X$ is \emph{stable} if it is connected, $g(X) \geq 2$ and whenever an irreducible component $X_v$ is smooth and rational, then $|X_v \cap X_v^c| \geq 3$. Equivalently, $X$ is stable if it is connected and $\omega_X$ is ample.  

We write $\md$ for a \emph{multidegree}, that is, a formal linear combination of vertices of $\GG_X$ with integer coefficients. In particular, we can add and subtract multidegrees coefficient-wise. We denote by $\md_v$ the coefficient at a vertex $v$ of $\GG_X$. We write $\md \pm v$ for the multidegree obtained by adding or subtracting $1$ from the coefficient at $v$. Any line bundle $L$ on $X$ has an associated multidegree $\mdeg(L)$, defined by $\mdeg(L)_v = \deg\left(L|_{X_v}\right)$. The \emph{total degree} of $\md$ is $\sum_v \md_v$, which coincides with the total degree $\deg(L)$ of $L$ if $\md$ is the multidegree of $L$. A multidegree is \emph{effective}, if $\md_v \geq 0$ for all vertices $v$ of $\GG_X$. 

\section{Semistable multidegrees and Brill Noether loci}
\label{sec:semistable}

In this section, we continue the preliminaries by introducing the main objects of interest for this paper -- semistable multidegrees in Subsection~\ref{subsec:semistable} and special loci in Subsection~\ref{subsec:special loci}.

\subsection{Semistable multidegrees} \label{subsec:semistable}
We begin with semistable line bundles and their multidegrees in the sense of \cite{Caporasocompactification}.

\begin{defn} \label{def:semistable}
Let $X$ be a stable curve. A line bundle $L$ on $X$ of total degree $d$ is called \emph{semistable} if for any subcurve $Y \subset X$ we have
	\begin{equation} \label{eq: upper bound}
	    	 g(Y) - 1 + (d - g + 1) \frac{2 g(Y) - 2 + |Y \cap Y^c|}{2g - 2} \leq \deg(L|_Y).
	\end{equation}
	The line bundle $L$ is called \emph{stable} if the inequality above is strict for every proper subcurve $Y \subsetneq X$.
\end{defn}

Whether $L$ is semistable or stable depends only on its multidegree $\md$, since \[\deg(L|_Y) = \sum_{X_v \subset Y} \md_v.\] We will say that a multidegree $\md$ is semistable or stable, if the corresponding line bundles are so.

\begin{lma}\label{lma:semistable residual}
A line bundle $L$ is semistable or stable if and only if its residual $\omega_X \otimes L^{-1}$ is so.
\end{lma}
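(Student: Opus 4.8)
The plan is to reduce the statement to a purely numerical comparison of the defining inequality in Definition~\ref{def:semistable} for $L$ and for its residual $M \coloneqq \omega_X \otimes L^{-1}$, exploiting that complementation $Y \mapsto Y^c$ is an involution on the set of subcurves of $X$. First I would record the elementary identities that govern how the relevant quantities behave under passing to the residual and to the complementary subcurve. Writing $\delta_Y \coloneqq \deg(\omega_X|_Y) = 2g(Y) - 2 + |Y \cap Y^c|$ and $d \coloneqq \deg L$, these are: (i) $\deg(M|_Y) = \delta_Y - \deg(L|_Y)$, since $M|_Y = \omega_X|_Y \otimes (L|_Y)^{-1}$; (ii) $\deg(L|_Y) + \deg(L|_{Y^c}) = d$, because the total degree splits over the two subcurves; and (iii) $\delta_Y + \delta_{Y^c} = 2g - 2$, together with the gluing formula $g(Y) + g(Y^c) = g + 1 - |Y \cap Y^c|$ for the arithmetic genus of $X = Y \cup Y^c$. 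I would also note the total-degree identity $\deg M = 2g - 2 - d$, so that the coefficient $(\deg M - g + 1)$ appearing in the inequality for $M$ equals $-(d - g + 1)$.

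The core step is the claim that the semistability inequality \eqref{eq: upper bound} for $M$ on a subcurve $Y$ is equivalent to the semistability inequality for $L$ on the complementary subcurve $Y^c$. Using (i) and $\deg M - g + 1 = -(d-g+1)$, the $M$-inequality on $Y$ rearranges into the upper bound $\deg(L|_Y) \le g(Y) - 1 + |Y \cap Y^c| + (d - g + 1)\tfrac{\delta_Y}{2g-2}$, where I use $\delta_Y - g(Y) + 1 = g(Y) - 1 + |Y \cap Y^c|$. On the other hand, substituting (ii) and (iii) into the $L$-inequality on $Y^c$ and simplifying produces exactly the same bound: the terms in $d$ cancel, and the genus-gluing identity converts $g(Y^c)$ and $\delta_{Y^c}$ into expressions in $g(Y)$, $|Y \cap Y^c|$ and $\delta_Y$. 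Once the two rearrangements are seen to coincide, the equivalence follows, and since $Y \mapsto Y^c$ is a bijection on subcurves, quantifying over all $Y$ shows that $M$ is semistable if and only if $L$ is. The identical bookkeeping with strict inequalities, noting that $Y$ is proper and nonempty exactly when $Y^c$ is, handles the stable case.

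The step I expect to require the most care is verifying that the two rearrangements match on the nose. The potential pitfall is conceptual: the naive translation of the $M$-condition yields an \emph{upper} bound on $\deg(L|_Y)$ for the \emph{same} $Y$, which looks incompatible with the \emph{lower} bound defining semistability of $L$. The resolution is precisely to pass to the complement $Y^c$, and the whole argument hinges on the gluing identity $g(Y) + g(Y^c) = g + 1 - |Y \cap Y^c|$ making the constant terms agree. I would also dispatch the degenerate subcurves $Y = \emptyset$ and $Y = X$ separately, where the inequality is either vacuous or reduces to the total-degree statement, so that they do not interfere with the bijection argument.
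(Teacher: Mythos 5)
Your proposal is correct and follows essentially the same route as the paper: both reduce the statement to checking that the defining inequality for $\omega_X \otimes L^{-1}$ on a subcurve $Y$ is equivalent to the inequality for $L$ on the complement $Y^c$, and then conclude via the involution $Y \mapsto Y^c$ on subcurves (with the same treatment of proper subcurves for the stable case). The only difference is cosmetic: the paper first rewrites Inequality~\eqref{eq: upper bound} in a form where $g(Y)$ enters only through $\deg(\omega_X|_Y)$, so it never needs the gluing identity $g(Y)+g(Y^c)=g+1-|Y\cap Y^c|$ that you invoke, but the bookkeeping is otherwise identical.
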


\begin{proof}
     Rearranging terms turns Inequality~\eqref{eq: upper bound} into 
    \begin{equation}\label{eq: upper bound classic}
	    	 d \frac{2 g(Y) - 2 + |Y \cap Y^c|}{2g - 2} - \frac{|Y \cap Y^c|}{2} \leq \deg(L|_Y).
    \end{equation}
    We want to apply Inequality~\eqref{eq: upper bound classic} to the subcurve $Y^c = \overline{X \setminus Y}$. Before doing so, note that $\deg(L|_{Y^c}) = d - \deg(L|_{Y})$ and recall that the degree of the restriction of the dualizing sheaf $\omega_X$ to $Y$ is $2 g(Y) - 2 + |Y \cap Y^c|$. In particular, $2 g(Y^c) - 2 + |Y \cap Y^c| = 2g - 2 - \left(2 g(Y) - 2 + |Y \cap Y^c|\right)$. We obtain from \eqref{eq: upper bound classic} applied to $Y^c$:
    \begin{eqnarray*}
        & & d \frac{2 g(Y^c) - 2 + |Y \cap Y^c|}{2g - 2} - \frac{|Y \cap Y^c|}{2} \leq \deg(L|_{Y^c}) \\ 
        &\Leftrightarrow& d - d \frac{2 g(Y) - 2 + |Y \cap Y^c|}{2g - 2} - \frac{|Y \cap Y^c|}{2} \leq d - \deg(L|_{Y}) \\
        &\Leftrightarrow& (2g - 2 - d) \frac{2 g(Y) - 2 + |Y \cap Y^c|}{2g - 2} - \left(2 g(Y) - 2 + |Y \cap Y^c|\right) - \frac{|Y \cap Y^c|}{2} \leq - \deg(L|_{Y}) \\
        &\Leftrightarrow& (2g - 2 - d) \frac{2 g(Y) - 2 + |Y \cap Y^c|}{2g - 2} - \frac{|Y \cap Y^c|}{2} \leq 2 g(Y) - 2 + |Y \cap Y^c| - \deg(L|_{Y}) \\
    \end{eqnarray*}
    The last inequality is Inequality~\eqref{eq: upper bound classic} for $\omega_X \otimes L^{-1}$ and hence the claim follows.
\end{proof}

\subsection{Brill Noether loci} \label{subsec:special loci}

Given a multidegree $\md$ on $X$ 
we write $\Pic(X)$ for the Picard scheme of $X$, and $\Pic^{\md}(X)$ for the connected component of $\Pic(X)$ that parametrizes line bundles of multidegree $\md$. For a line bundle $L$, we denote by $[L]$ the corresponding point of $\Pic(X)$.

\begin{defn}
The \emph{Brill Noether locus} $W_{\md}^r(X)$ is the subset of $\Pic^{\md}(X)$ given by
\[
W_{\md}^r(X) \coloneqq \left\{[L] \in \Pic^{\md}(X) \, | \, h^0(X,L) \geq r + 1 \right\}. 
\]
We will write $W_{\md}(X) \vcentcolon= W_{\md}^0(X)$ for the \emph{effective locus}.
\end{defn}

The next proposition is the standard observation that the Brill Noether locus $W_{\md}^r(X)$ can be realized as a degeneracy locus of a map between vector bundles. We sketch the argument for the convenience of the reader, following the presentation in the proof of \cite[Proposition 2.2]{Beauville}.

\begin{prop}\label{prop:determinantal}
Let $X$ be a stable curve and $\md$ a multidegree of total degree $d$. Then $W_{\md}^r(X)$ is a closed subset of $\Pic^{\md}(X)$ with either $W_{\md}^r(X) = \emptyset$, or each irreducible component of $W_{\md}^r(X)$ has dimension at least $\min \left \{g, g - (r + 1)(g - d + r) \right\}$.  
\end{prop}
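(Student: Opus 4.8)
The plan is to realize $W^r_{\md}(X)$ as a degeneracy locus of a map of vector bundles over $\Pic^{\md}(X)$ and invoke the standard lower bound on the codimension of such loci. The key point is that $h^0(X,L) \geq r+1$ can be detected by the rank of a suitable map, and that $\Pic^{\md}(X)$, being a torsor under the Jacobian (which is a semiabelian variety here since $X$ is singular), has dimension exactly $g$.

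First I would fix a line bundle $N$ of multidegree $\md$ to serve as a base point, identifying $\Pic^{\md}(X) \cong \Pic^{0}(X) =: P$, and construct a Poincar\'e bundle $\mathcal{L}$ on $X \times P$ restricting to the parametrized line bundles on the fibers $X \times \{[L]\}$. Next I would choose an effective divisor $D$ supported on the smooth locus of $X$, of sufficiently large degree $m$, so that for every $[L] \in P$ the twist $L(D)$ is non-special, i.e. $h^1(X, L(D)) = 0$; this is possible by semicontinuity and a compactness/Serre-vanishing argument, since $P$ is proper. Pushing forward the exact sequence
\[
0 \to \mathcal{L} \to \mathcal{L}(D) \to \mathcal{L}(D)|_{D \times P} \to 0
\]
along the projection $\pi\colon X \times P \to P$ yields a map of vector bundles
\[
\phi\colon E \coloneqq \pi_*\bigl(\mathcal{L}(D)\bigr) \longrightarrow F \coloneqq \pi_*\bigl(\mathcal{L}(D)|_{D \times P}\bigr),
\]
where $F$ has rank $m = \deg D$ and, by the vanishing of $h^1(X,L(D))$ together with Riemann-Roch, $E$ has rank $m + d - g + 1$. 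The crucial observation is that $h^0(X,L) = \dim \ker\bigl(\phi_{[L]}\bigr)$, so that $W^r_{\md}(X)$ is precisely the locus where $\phi$ has kernel of dimension at least $r+1$, equivalently where its rank drops by at least $(r+1)$ relative to $\operatorname{rk} E$.

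Then I would apply the general theorem on degeneracy loci (e.g. the Giambelli--Thom--Porteous setup as in the smooth Brill--Noether case): the locus where a map $\phi\colon E \to F$ between bundles of ranks $e$ and $f$ has rank at most $k$ is either empty or every irreducible component has codimension at most $(e-k)(f-k)$ in the base. Here I want the kernel to have dimension $\geq r+1$, so $k = e - (r+1) = m + d - g - r$, giving $e - k = r+1$ and $f - k = m - (m + d - g - r) = g - d + r$. Hence each component of $W^r_{\md}(X)$ has codimension at most $(r+1)(g-d+r)$ in $\Pic^{\md}(X)$. Since $\dim \Pic^{\md}(X) = g$, each nonempty component has dimension at least $g - (r+1)(g-d+r)$; combined with the trivial bound that no component can exceed the ambient dimension $g$, this yields the stated bound $\min\{g,\, g - (r+1)(g-d+r)\}$.

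The main obstacle, and the only genuinely nontrivial input beyond bookkeeping, is ensuring the Poincar\'e bundle and the vanishing of $h^1$ are available uniformly over $P$ so that $E$ and $F$ are honest vector bundles (not just coherent sheaves with jumping rank); this is where properness of $\Pic^{\md}(X)$ is used to extract a single divisor $D$ working for all fibers. A secondary care point is confirming that $\dim\Pic^{\md}(X)=g$ even though $X$ is singular---this holds because the connected component is a torsor under the generalized Jacobian, whose dimension is $g$ for a curve of arithmetic genus $g$. I would simply cite the degeneracy-locus dimension bound rather than reprove it, as the paper explicitly flags this as a standard observation and only sketches the argument.
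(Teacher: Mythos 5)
Your overall strategy is the same as the paper's: choose a Poincar\'e bundle, twist by a divisor $D$ of smooth points so that the pushforward gives a two-term complex of vector bundles computing cohomology fiberwise, identify $W^r_{\md}(X)$ as the locus where the resulting map of bundles drops rank by $r+1$, and cite the standard dimension bound for degeneracy loci. (The paper twists down by $-D$ and works with $E_1 \to E_2 = R^1$-pushforwards, whereas you twist up by $+D$ and use the evaluation map to $\mathcal{O}_D$; these are dual formulations of the same construction and your numerology matches the paper's, giving the same Brill--Noether number $g-(r+1)(g-d+r)$.)

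However, there is a genuine error at the step you yourself single out as the crux. You justify the existence of a single divisor $D$ with $h^1(X,L(D))=0$ for all $[L]\in\Pic^{\md}(X)$ by appealing to properness of $P=\Pic^{\md}(X)$. For a stable curve this is false in general: as you note elsewhere in your own write-up, $\Pic^0(X)$ is semiabelian, an extension of a product of Jacobians of the normalized components by a torus whose dimension is the first Betti number of the dual graph $\GG_X$; it is proper only when $\GG_X$ is a tree. So ``since $P$ is proper'' cannot be invoked, and your proposal is internally inconsistent on this point. The gap is repairable without properness: semicontinuity gives, for each $[L]$, a divisor $D_{[L]}$ and an open neighborhood of $[L]$ on which $h^1$ of the twist vanishes; since $\Pic^{\md}(X)$ is noetherian, hence quasi-compact, finitely many such opens cover it, and one takes $D$ to be the sum of the corresponding divisors, using that $h^1(X,L(D))=0$ implies $h^1(X,L(D+D'))=0$ for any effective $D'$ supported on smooth points (from the long exact sequence of $0\to L(D)\to L(D+D')\to \mathcal{O}_{D'}\to 0$). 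Alternatively, cite the uniform statements the paper relies on, namely \cite[Lemma 2.1]{CF} or \cite[Lemma 2.5]{caporasosemistable}. A smaller quibble: your formulation of the degeneracy-locus bound (``each component has codimension at most $(r+1)(g-d+r)$'') breaks down when $d>g+r$, where this quantity is negative; the correct statement must allow ``or the whole space'' as an outcome (as the paper's does), which is precisely how the $\min\{g,\,g-(r+1)(g-d+r)\}$ in the proposition arises.
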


Note, in particular, that the effective locus $W_{\md}(X)$ is either empty, or each irreducible component has dimension at least $\min\{d,g\}$.  

\begin{proof}
    Let $\mathcal P \to \Pic^{\md}(X) \times X$ be a Poincar\'e bundle, that is, a line bundle that restricts to $L$ over $[L] \times X$. Denote by $\pr_1$ and $\pr_2$ the projections from $\Pic^{\md}(X) \times X$ to $\Pic^{\md}(X)$ and $X$, respectively. 
    
    Recall that we denote by $\omega_X$ the dualizing sheaf of $X$. We choose $D = \sum_{i = 1}^k p_i$ where the $p_i \in X$ are $k$ smooth points, such that \[h^0\left(X, \omega_X \otimes L^{-1} \otimes \mathcal O_X(D)\right) = \deg\left(\omega_X \otimes L^{-1} \otimes \mathcal O_X(D)\right) - g + 1 =  g - 1 - d + k\] for all $[L] \in \Pic^{\md}(X)$. This is always possible, see \cite[Lemma 2.1]{CF} or \cite[Lemma 2.5]{caporasosemistable}.
    We get a short exact sequence
    \begin{equation*}\label{eq: ses points}
        0 \to \mathcal P \otimes \pr_2^* \mathcal O_X\left(- D\right) \to \mathcal P \to \mathcal P \otimes \pr_2^* \mathcal O_D \to 0. 
    \end{equation*}
    We apply $(\pr_1)_*$ to this sequence and set $E_1 = (\pr_1)_* (\mathcal P \otimes \pr_2^* \mathcal O_D)$, which is locally free of rank $k$, and $E_2 = R^1  (\pr_1)_* \left(\mathcal P \otimes \pr_2^* \mathcal O_X\left(- D\right)\right)$, which is locally free of rank $g - 1 - d + k$ by the choice of $D$. Since $R^1(\pr_1)_* \left(\mathcal P \otimes \pr_2^* \mathcal O_D\right) = 0$, the higher direct image sequence induces an exact sequence 
    \begin{equation*}
        E_1 \xrightarrow[]{u} E_2 \to R^1  (\pr_1)_* \mathcal P \to 0.
    \end{equation*}
    Over $[L] \in \Pic^{\md}(X)$ it restricts to   
     \begin{equation*}
        (E_1)_{[L]} \xrightarrow[]{u_{[L]}} (E_2)_{[L]} \to H^1(X, L) \to 0.
    \end{equation*}
    By the Riemann Roch theorem, $h^0(X,L) \geq r + 1$ if and only if $h^1(X, L) \geq g - d + r$. On the other hand, the exact sequence gives $h^1(X, L) = g - 1 - d + k - \dim\left(\Img(u_{[L]})\right)$. Thus $W_{\md}^r(X)$ is the locus where $u$ has rank at most $k - 1 - r$. The expected dimension of this locus is the Brill Noether number
    \[
    \rho = g - (k - k + 1 + r)(g - 1 - d + k - k + 1 + r) = g - (r + 1)(g - d  + r).
    \]
    The claim now follows by the observation that a degeneracy locus is either empty, the whole space, or each of its irreducible components has dimension at least the expected dimension (see, e.g., \cite[Section 2.4]{curvesv1}).
\end{proof}

We will be interested in loci of special line bundles, which are the first of the Brill Noether loci defined above:
We call a line bundle $L$ of total degree $d$ \emph{special}, if $h^0(X,L) > \max\{0, d - g + 1\}$. The term `special' is of course taken from the case of irreducible curves, where non-special line bundles are dense in $\Pic^d(X)$. 

\begin{lma}\label{lma:special residual}
Let $X$ be a stable curve with dualizing sheaf $\omega_X$. Then a line bundle $L$ on $X$ is special, if and only if its residual $\omega_X \otimes L^{-1}$ is.  
\end{lma}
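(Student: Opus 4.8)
The plan is to reduce the notion of specialness to a manifestly residual-symmetric condition, using Serre duality together with Riemann--Roch. Recall that a nodal curve is Gorenstein, so $\omega_X$ is a line bundle and Serre duality holds: for any line bundle $L$ on $X$ we have $h^1(X,L) = h^0(X, \omega_X \otimes L^{-1})$. Writing $M = \omega_X \otimes L^{-1}$ for the residual, this reads $h^1(X,L) = h^0(X,M)$, and applying the same identity to $M$ (using that $\omega_X \otimes M^{-1} = L$) gives $h^1(X,M) = h^0(X,L)$. Thus passing to the residual exchanges $h^0$ and $h^1$.

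The key step is to reformulate specialness in a symmetric form: I claim that $L$ of total degree $d$ is special if and only if both $h^0(X,L) > 0$ and $h^1(X,L) > 0$. This follows from Riemann--Roch, $h^0(X,L) - h^1(X,L) = d - g + 1$, by a short case distinction on the sign of $a \coloneqq d - g + 1$. If $a \geq 0$, then $\max\{0, d - g + 1\} = a$, and $h^0(X,L) > a$ is equivalent to $h^1(X,L) = h^0(X,L) - a > 0$, while $h^0(X,L) > a \geq 0$ forces $h^0(X,L) > 0$; conversely $h^0(X,L), h^1(X,L) > 0$ give $h^0(X,L) = a + h^1(X,L) > a$. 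If $a < 0$, then $\max\{0, d - g + 1\} = 0$, so $L$ is special iff $h^0(X,L) > 0$, in which case $h^1(X,L) = h^0(X,L) - a > 0$ holds automatically. In all cases $L$ is special precisely when $h^0(X,L) > 0$ and $h^1(X,L) > 0$.

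With this reformulation, the lemma is immediate: $L$ is special iff $h^0(X,L) > 0$ and $h^1(X,L) > 0$, which by the Serre duality identities above is the same as $h^1(X,M) > 0$ and $h^0(X,M) > 0$, i.e.\ $M = \omega_X \otimes L^{-1}$ is special. Since $\omega_X \otimes M^{-1} = L$, the condition is symmetric in $L$ and $M$, so the equivalence holds in both directions.

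I do not expect a genuine obstacle here: the only nontrivial input is that Serre duality is available on a nodal (hence Gorenstein) curve with its dualizing sheaf $\omega_X$, which is standard. The one point requiring minor care is the maximum in the definition of special, which is precisely why I first pass to the symmetric characterization via the sign of $d - g + 1$, rather than comparing $h^0(X,L)$ with $\max\{0, d-g+1\}$ directly.
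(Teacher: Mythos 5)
Your proof is correct and takes essentially the same route as the paper: both arguments rest on Riemann--Roch combined with Serre duality, i.e.\ $h^1(X,L) = h^0(X,\omega_X \otimes L^{-1})$. The only difference is cosmetic --- the paper splits into the cases $d \leq g-1$ and $d \geq g-1$ and compares $h^0$'s directly, whereas you first repackage specialness as the manifestly symmetric condition $h^0(X,L) > 0$ and $h^1(X,L) > 0$, which disposes of both directions at once.
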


\begin{proof}
    Suppose $d = \deg(L) \leq g -1$, the case $d \geq g - 1$ is analogous. By the Riemann Roch theorem we have $h^0(X,L) = d - g + 1 + h^0(X, \omega_X \otimes L^{-1})$. Thus \[h^0(X,L) > 0 \Leftrightarrow h^0(X, \omega_X \otimes L^{-1}) > g - d - 1 = (2g -2 - d) - g + 1,\]
    and the claim follows, since $\deg\left(\omega_X \otimes L^{-1}\right) = 2g - 2 - d$.
\end{proof}

\begin{rmk}\label{rmk:residual}
By Lemmas~\ref{lma:semistable residual} and \ref{lma:special residual}, it suffices to restrict to semistable multidegrees of total degree $d \leq g - 1$ and to consider effective loci $W_{\md}(X)$ to obtain the analogous information about special loci also for  $d > g -1$ by passing to residuals. More precisely, if $\md$ is semistable of total degree $d$, then $\md' = \mdeg(\omega_X) - \md$ is semistable of degree $2g - 2 - d$ by Lemma~\ref{lma:semistable residual} and the isomorphism $\Pic^{\md}(X) \to \Pic^{\md'}(X), [L] \mapsto \left[\omega_X \otimes L^{-1}\right]$ preserves special loci by Lemma~\ref{lma:special residual}.
\end{rmk}

\section{Theta divisors} \label{sec:theta}

In this section, we consider the case $d = g - 1$. Semistable multidegrees admit a very convenient description in this case, using orientations on the dual graph. We collect the necessary notions in Subsection~\ref{subsec:orientations}. In Subsection~\ref{subsec:theta}, we discuss existence of Theta divisors, with the main results in Theorem~\ref{thm:theta divisor}. In Subsection~\ref{subsec:abel}, we recall a description of the irreducible components of Theta divisors due to Coelho and Esteves~\cite{coelhoesteves}, that will be useful in the next section.

\subsection{Graph orientations and their associated multidegrees}\label{subsec:orientations}

We begin by fixing some standard notions from graph theory.
An \emph{orientation} $O$ on the graph $\GG_X$ is the assignment of a direction to each edge of $\GG_X$. The multidegree $\md_O$ associated to an orientation $O$ has by definition value \[ \indeg_O(v) -1 + g_v\] on a vertex $v \in V(\GG_X)$, where $\indeg_O(v)$ is the number of edges adjacent to $v$ that are oriented towards $v$ in $O$. A multidegree $\md$ is called \emph{orientable}, if there is an orientation $O$ of the edges of $\GG_X$ such that $\md = \md_O$. Note that an orientation $O$ giving $\md$ need not be unique. An orientable multidegree has total degree \[\sum_{v \in \GG_X} \md_v = \sum_{v \in V(\GG_X)} \left( g_v - 1 + \indeg_O(v)\right) = |E(\GG_X)| - |V(\GG_X)| +   \sum_{v \in V(\GG_X)}  g_v = g - 1.\]

\tikzset{every picture/.style={line width=0.75pt}}
    
    \begin{figure}[ht]
    \begin{tikzpicture}[x=0.7pt,y=0.7pt,yscale=-0.8,xscale=0.8]
    \import{./}{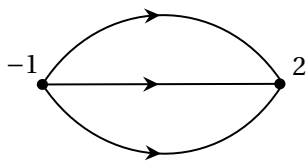}
    \end{tikzpicture}	
    \caption{
       An orientation and its associated multidegree. Both vertices are weightless and the three edges form a directed cut.
    } \label{figure1}
\end{figure}

\begin{lma} \label{lma:orientation_residual}
    Let $L$ be a line bundle on $X$ of multidegree $\md$. Assume $\md = \md_O$ and let $O'$ be the orientation obtained from $O$ by reversing the direction of each edge. Then \[\mdeg(\omega_X \otimes L^{-1}) = \md_{O'}.\]
\end{lma}

\begin{proof}
    For any vertex $v \in V(\GG_X)$, we have by definition $\md_v = g_v - 1 + \indeg_O(v)$. On the other hand, \[\mdeg(\omega_X \otimes L^{-1})_v = 2g_v - 2 + \val(v) - \md_v = g_v - 1 + \val(v) - \indeg_O(v).\]
    Any loop based at $v$ contributes $2$ to $\val(v)$ and $1$ to $\indeg_O(v)$. Any other edge adjacent to $v$ is either oriented towards $v$ or away from it. Thus $\val(v) - \indeg_O(v)$ equals the number of loop edges based at $v$ plus the number of edges adjacent to $v$ and oriented away from $v$ in $O$. This is by construction $\indeg_{O'}(v)$ and hence the claim follows.
\end{proof}

Recall that any subcurve $Y \subset X$ defines an induced subgraph $\GG_Y \subset \GG_X$.  The subset of edges of $\GG_X$ that are adjacent to a vertex in $\GG_Y$ and a vertex not in $\GG_Y$ is called the \emph{cut} defined by $\GG_Y$. They correspond to the nodes in $Y \cap Y^c$. If $O$ is an orientation on $\GG_X$, a cut is called a \emph{directed cut} if every edge in the cut is directed towards $\GG_Y$.

A \emph{cycle} in $\GG_X$ is a connected subgraph, in which each vertex has valence $2$. 
A \emph{directed cycle} in an orientation $O$ is a cycle in $\GG_X$, such that every vertex has exactly one edge directed towards it in the restriction of $O$ to the cycle; in particular, a loop edge gives a directed cycle in any orientation.

\begin{lma} \label{lma:cut cycle}
For every orientation $O$ on $\GG_X$, any edge $e$ of $\GG_X$ is contained in a directed cut or a directed cycle, but not both. 
\end{lma}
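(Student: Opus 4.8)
The plan is to fix the edge $e$, oriented $u \to w$ in $O$, and to prove the two halves of the statement separately: that $e$ lies in at least one of a directed cut or a directed cycle, and that it cannot lie in both. For the existence half, I would argue by reachability in the oriented graph. Let $\GG_Y$ be the induced subgraph on the set of vertices reachable from $w$ by a directed path in $O$, where $w$ counts as reachable from itself. By construction $\GG_Y$ is closed under outgoing edges: if $a$ is a vertex of $\GG_Y$ and $O$ orients an edge from $a$ to $b$, then $b$ is reachable from $w$ as well, hence $b$ is a vertex of $\GG_Y$. Now distinguish two cases according to whether $u$ is a vertex of $\GG_Y$. If $u$ is not a vertex of $\GG_Y$, then since $w$ is, the edge $e$ belongs to the cut defined by $\GG_Y$; moreover every edge of this cut joins a vertex of $\GG_Y$ to a vertex of its complement, and by the closure property each such edge must be oriented towards $\GG_Y$, so the cut is directed and contains $e$. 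If instead $u$ is a vertex of $\GG_Y$, there is a directed path $P$ from $w$ to $u$; choosing $P$ of minimal length makes it simple, and appending $e$ closes it into a directed cycle through $e$. (When $e$ is a loop at $v = u = w$, the second case applies trivially, with $e$ itself the directed cycle.)

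For the exclusivity half, suppose $e$ lay simultaneously in a directed cut defined by some $\GG_Y$ and in a directed cycle $C$. Since $e$ lies in the cut it is oriented towards $\GG_Y$, so $w$ is a vertex of $\GG_Y$ and $u$ is not. Traverse $C$ starting from $w$ along its consistent orientation; it eventually returns to $u$ just before reusing $e$. As the traversal begins at a vertex of $\GG_Y$ and ends at $u$, which is not a vertex of $\GG_Y$, some edge $f$ of $C$ passes, in the direction of traversal, from a vertex of $\GG_Y$ to a vertex of its complement. Then $f$ is a cut edge oriented away from $\GG_Y$, contradicting the assumption that every edge of the cut points towards $\GG_Y$. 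Hence $e$ cannot lie in both, and combined with the existence half this proves the lemma.

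The routine parts are the reachability construction together with its closure property and the extraction of a simple cycle from a shortest directed path. I expect the only real subtlety to be in the exclusivity argument, namely justifying cleanly that a consistently oriented closed cycle which enters $\GG_Y$ across a cut must also leave it across the same cut, so that the single cut edge $e$ already forces a second cut edge oriented the opposite way; phrasing this via the starting/ending membership of the traversal, rather than a full parity count of crossings, keeps the argument short and avoids case analysis on how often $C$ crosses the cut.
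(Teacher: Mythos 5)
Your proof is correct and follows essentially the same route as the paper's: the existence half uses the same reachability set from the head of $e$ with the same two-case split, and your traversal argument for exclusivity just spells out the paper's terse observation that a directed cut cannot restrict to a non-empty directed cut on a directed cycle. No gaps; your version is merely more detailed.
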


\begin{proof}
    This is well-known and usually attributed to \cite{Minty66}. We sketch a proof for the convenience of the reader. 
    Let $e$ be directed in $O$ from $v$ to $w$. 
    
    Suppose $e$ is contained in a directed cut and a directed cycle. Then the directed cut would restrict to a non-empty directed cut on the directed cycle, which is not possible.
    Thus $e$ is not contained in a directed cut and a directed cycle.
    
    To see that $e$ is contained in a directed cut or a directed cycle, let $V \subset V(\GG_X)$ denote the set of vertices $v'$ for which there exists a directed path from $w$ to $v'$ in $O$. If $v \in V$, $e$ completes the directed path from $w$ to $v$ to a directed cycle that contains $e$. If $v \not \in V$, the induced subgraph with vertices $V$ defines a directed cut containing $e$.
\end{proof}

\medskip

An orientation $O$ is called \emph{acyclic} if it contains no directed cycles, and hence by Lemma~\ref{lma:cut cycle} every edge is contained in a directed cut. 
It is called \emph{totally cyclic} if every edge is contained in a directed cycle, and hence by Lemma~\ref{lma:cut cycle} it contains no directed cuts.

A \emph{source} of an orientation $O$ is a vertex $v$ such that all edges adjacent to $v$ are oriented away from $v$. The following lemma is another well-known fact from graph theory.

\begin{lma}\label{lma:acyclic source}
Let $\GG_X$ be a graph and $O$ an orientation on $\GG_X$. 
\begin{enumerate}
\item The multidegree $\md_O$ is not effective, if and only if $O$ contains a source $v$ with $g_v = 0$.
\item If $O$ is an acyclic orientation, then it contains a source $v$.
\end{enumerate}   
\end{lma}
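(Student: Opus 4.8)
The plan is to handle the two parts separately, both by unwinding the defining formula $(\md_O)_v = g_v - 1 + \indeg_O(v)$ and then invoking elementary facts about orientations.

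For part (1), I would ask directly when $(\md_O)_v$ can be negative. Since $g_v$ and $\indeg_O(v)$ are both non-negative integers, the inequality $g_v - 1 + \indeg_O(v) < 0$ is equivalent to $g_v + \indeg_O(v) \leq 0$, which forces $g_v = 0$ and $\indeg_O(v) = 0$ simultaneously. The condition $\indeg_O(v) = 0$ says precisely that no edge is directed towards $v$, i.e. that $v$ is a source. Hence $\md_O$ fails to be effective exactly when some vertex $v$ is a source with $g_v = 0$, which gives both directions of the equivalence at once. One should note in passing that a loop at $v$ contributes to $\indeg_O(v)$ -- consistent with the convention that a loop always forms a directed cycle -- so a vertex carrying a loop can never be a source and never violates effectivity; this is the only place where the loop convention needs a moment's care.

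For part (2), I would argue by contradiction using finiteness of $\GG_X$. Suppose $O$ has no source; then every vertex has at least one incoming edge. Starting from an arbitrary vertex $v_0$, pick an incoming edge to expose a predecessor $v_1$ with $v_1 \to v_0$, and repeat to build a sequence $v_0, v_1, v_2, \dots$ with edges $v_{i+1} \to v_i$. Since $V(\GG_X)$ is finite, some vertex must repeat, say $v_j = v_i$ with $j > i$, and then $v_j \to v_{j-1} \to \dots \to v_i = v_j$ is a closed directed walk, from which one extracts a directed cycle. This contradicts the acyclicity of $O$, so a source must exist. Equivalently, one could take a maximal directed path and check that its initial vertex is a source, the same acyclicity argument ruling out any backward extension.

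Neither part presents a serious obstacle: part (1) is bookkeeping with the indegree formula together with the non-negativity of $g_v$, and part (2) is the standard fact that a finite acyclic orientation has a source. The only points requiring genuine care are the treatment of loop edges (handled by the loop-is-a-directed-cycle convention) and ensuring that the walk constructed in part (2) yields an honest directed cycle rather than merely a closed walk, which is what actually contradicts acyclicity.
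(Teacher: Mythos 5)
Your proof is correct, and part (1) is essentially what the paper does: the paper dismisses it as following "immediately from the definition," and your unwinding of $(\md_O)_v = g_v - 1 + \indeg_O(v)$ (with the observation that $g_v, \indeg_O(v) \geq 0$ forces $g_v = 0$ and $\indeg_O(v) = 0$ simultaneously, and that a loop contributes to indegree so a looped vertex is never a source) is exactly the intended bookkeeping. For part (2), though, you take a genuinely different route. You argue by contradiction: if there is no source, every vertex has an incoming edge, so one builds a backward walk $v_0, v_1, v_2, \dots$ with $v_{i+1} \to v_i$, and finiteness of $V(\GG_X)$ forces a repeated vertex, yielding a closed directed walk from which a directed cycle is extracted — contradicting acyclicity. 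The paper instead invokes Lemma~\ref{lma:cut cycle} (the cut/cycle dichotomy): since $O$ is acyclic, some edge lies in a directed cut oriented towards an induced subgraph $\GG_Y$; the restriction of $O$ to a connected component of $\GG_Y^c$ is again acyclic, and iterating this shrinks the complement down to a single vertex, which is then a source by construction. Your argument is more elementary and self-contained, needing nothing beyond finiteness, whereas the paper's argument reuses the directed-cut machinery that recurs throughout its treatment of semistable multidegrees, keeping everything inside one combinatorial framework. Both are complete; your closing caveat — that one must extract an honest directed cycle from the closed walk, not merely exhibit a closed walk — is precisely the step requiring care in your version.
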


\begin{proof}
   The first claim follows immediately from the definition of $\md_O$.
   
   For the second claim observe that if $O$ is acyclic, then Lemma~\ref{lma:cut cycle} implies that $\GG_X$ contains a directed cut oriented towards an induced subgraph $\GG_Y$. The restriction of an acyclic orientation remains acyclic, and hence the restriction of $O$ to a connected component of $\GG_Y^c$ is acyclic. We can repeat this argument, until $\GG_Y^c$, the complement of $\GG_Y$, contains a single vertex, which by construction will be a source. 
\end{proof}

The following proposition motivates our interest in orientations in the context of semistable multidegrees.

\begin{prop}{\cite[Proposition 3.6]{Alexeev}} \label{prop:orientable}
Let $X$ be a stable curve and $\md$ a multidegree of total degree $g - 1$. Then $\md$ is semistable, if and only if $\md$ is orientable. It is stable, if and only if it can be given by a totally cyclic orientation.
\end{prop}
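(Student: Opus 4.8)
The plan is to reduce the proposition to the classical theorem characterizing which degree sequences arise as indegree sequences of orientations. First I would exploit the hypothesis $d = g-1$: the coefficient $d - g + 1$ in Inequality~\eqref{eq: upper bound} vanishes, so semistability becomes the clean condition $\deg(L|_Y) \geq g(Y) - 1$ for every subcurve $Y$ (with equality forced for $Y = X$), and stability the corresponding strict inequality for proper nonempty $Y$. Writing $a_v := \md_v - g_v + 1$ and using the genus formula $g(Y) - 1 = |E(\GG_Y)| - |V(\GG_Y)| + \sum_{v \in V(\GG_Y)} g_v$, one gets
\[
\deg(L|_Y) - \bigl(g(Y) - 1\bigr) = \sum_{v \in V(\GG_Y)} a_v - |E(\GG_Y)|.
\]
Thus $\md$ is semistable if and only if $\sum_{v \in V(\GG_Y)} a_v \geq |E(\GG_Y)|$ for every subcurve $Y$ (equivalently, every induced subgraph), while the total degree being $g-1$ is exactly $\sum_v a_v = |E(\GG_X)|$.

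Now observe that, by definition of $\md_O$, an orientation $O$ satisfies $\md = \md_O$ precisely when $\indeg_O(v) = a_v$ for all $v$. Hence $\md$ is orientable if and only if there exists an orientation of $\GG_X$ with prescribed indegrees $a_v$, and the equivalence ``semistable $\iff$ orientable'' is exactly the statement that such an orientation exists iff $\sum_v a_v = |E(\GG_X)|$ and $\sum_{v \in V(\GG_Y)} a_v \geq |E(\GG_Y)|$ for every induced subgraph. One direction is immediate: summing $\indeg_O(v) = a_v$ over $v \in V(\GG_Y)$ counts every edge of $\GG_Y$ once plus the cut edges directed into $Y$, so $\sum_{v \in V(\GG_Y)} a_v \geq |E(\GG_Y)|$. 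For the converse I would argue by Hall's marriage theorem (or max-flow--min-cut): form the bipartite incidence between the edge set $E(\GG_X)$ and a pool of ``slots'', $a_v$ of them at each vertex $v$, where an edge is adjacent to the slots of its endpoints; since $\sum_v a_v = |E(\GG_X)|$, an orientation with the required indegrees is the same as a perfect matching saturating $E(\GG_X)$, and Hall's condition for such a matching reduces, after taking the vertex set incident to a given family of edges, precisely to the inequalities $\sum_{v \in V(\GG_Y)} a_v \geq |E(\GG_Y)|$.

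For the stability statement I would fix any orientation $O$ with $\md = \md_O$ (which exists once $\md$ is semistable) and reread the displayed identity: its left-hand side, the slack of the semistability inequality at $Y$, depends only on $\md$, while the computation above shows the right-hand side equals the number of cut edges of $O$ directed into $Y$. Hence the slack vanishes at a proper nonempty $Y$ exactly when the cut defined by $\GG_Y$ is directed towards $\GG_{Y^c}$, i.e.\ is a directed cut. Running this over all proper nonempty $Y$ (and their complements) shows that $\md$ is stable if and only if $O$ has no directed cut, which by Lemma~\ref{lma:cut cycle} means every edge lies on a directed cycle, i.e.\ $O$ is totally cyclic. Since the slack is intrinsic to $\md$, this simultaneously shows that a stable $\md$ forces every orientation realizing it to be totally cyclic, and that a totally cyclic orientation yields a stable multidegree, which is the asserted equivalence.

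The genuinely nontrivial ingredient is the existence half of the orientation theorem in the second paragraph; everything else is the vanishing of the $d - g + 1$ term and bookkeeping with the genus formula and cuts. I expect the main care to be needed there, in checking that Hall's condition over arbitrary edge families collapses to the induced-subgraph inequalities (using $a_v \geq 0$, which itself follows from the inequality at the one-vertex subcurves, where $|E(\GG_Y)|$ counts the loops at $v$), and in handling loops and multiple edges, though these cause no real difficulty.
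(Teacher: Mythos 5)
Your proof is correct, but note that the paper itself contains no proof of this proposition: it is imported from \cite[Proposition 3.6]{Alexeev}, and the only in-text justification is Remark~\ref{rmk:Hakimi}, which observes that for $d = g-1$ the coefficient $d-g+1$ vanishes, so semistability collapses to the condition that the total degree of $\md$ on every induced subgraph $\GG_Y$ is at least $g(\GG_Y)-1$, making the statement ``a version of Hakimi's Theorem.'' Your argument carries out exactly this reduction (your identity $\deg(L|_Y) - (g(Y)-1) = \sum_{v \in V(\GG_Y)} a_v - |E(\GG_Y)|$ is the content of that remark) and then, instead of citing Hakimi, proves the orientation-existence theorem from scratch: the slot/matching bipartite model is the standard proof of Hakimi's theorem via Hall, and your key verification---that Hall's condition for an arbitrary edge family $S$ follows from the induced-subgraph inequality applied to the induced subgraph on $V(S)$, with $a_v \geq 0$ supplied by the one-vertex subcurves---is sound, including for loops and multiple edges. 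Your handling of the stability clause is also correct and is not spelled out anywhere in the paper: identifying the slack at a proper subcurve $Y$ with the number of cut edges directed into $\GG_Y$ shows that stability of $\md_O$ is equivalent to $O$ having no directed cut, which by Lemma~\ref{lma:cut cycle} is equivalent to $O$ being totally cyclic, and since the slack depends only on $\md$, both directions of the equivalence follow at once. What your route buys is self-containedness: the paper's chain of citations (Alexeev, Hakimi) is replaced by a complete combinatorial argument whose only external input is Hall's marriage theorem.
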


 \begin{rmk} \label{rmk:Hakimi}
 Inserting $d =  g-1$ in Definition~\ref{def:semistable}, a multidegree $\md$ of total degree $g - 1$ is semistable, if and only if for every induced subgraph $\GG_Y \subset \GG_X$, the total degree of $\md$ on $\GG_Y$ is at least $g(\GG_Y) - 1$. In this formulation, Proposition~\ref{prop:orientable} is a version of Hakimi's Theorem \cite{Ha65}. 
 \end{rmk}
 
\subsection{Existence of Theta divisors}\label{subsec:theta}

Apart from the combinatorial characterization of semistability in terms of orientations, there is the following characterization in terms of the existence of non-special line bundles.

\begin{prop}{\cite[Lemma 2.1]{Beauville}} \label{prop:beauville hzero}
Let $X$ be a stable curve and $\md$ a multidegree of total degree $g - 1$. Then $\md$ is semistable, if and only if there is a line bundle $L$ of multidegree $\md$ with $h^0(X, L) = 0$.
\end{prop}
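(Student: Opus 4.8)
The statement is an equivalence, and I would prove its two implications by entirely different methods. For the contrapositive of the ``only if'' direction, \emph{non-semistable implies every line bundle is effective}, I would use a direct Riemann--Roch estimate. Suppose $\md$ is not semistable. By Remark~\ref{rmk:Hakimi} there is an induced subgraph $\GG_Y$ with $\deg(\md|_Y) \leq g(Y) - 2$, and passing to a connected component I may assume $Y$ is connected, so that $Y^c \neq \emptyset$. For any $L$ of multidegree $\md$, the sections of $L$ vanishing identically on $Y$ are exactly $H^0(Y^c, L|_{Y^c}(-\Sigma))$, where $\Sigma = Y \cap Y^c$, and this space injects into $H^0(X,L)$. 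Using the sequence $0 \to \mathcal O_{Y^c}(-\Sigma) \to \mathcal O_X \to \mathcal O_Y \to 0$ I get $\chi(\mathcal O_{Y^c}) = g(Y) - g + |\Sigma|$, and combined with $\deg(L|_{Y^c}) = (g-1) - \deg(L|_Y) \geq g - g(Y) + 1$ this yields $\chi(L|_{Y^c}) \geq |\Sigma| + 1$. Hence $h^0(Y^c, L|_{Y^c}(-\Sigma)) \geq h^0(Y^c, L|_{Y^c}) - |\Sigma| \geq 1$, so $h^0(X,L) > 0$ for every such $L$. This step is routine; the only care needed is to work with $\chi$ rather than with genera, so that a possibly disconnected $Y^c$ causes no trouble.

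The substantive direction is \emph{semistable implies a non-effective line bundle exists}. By Proposition~\ref{prop:orientable} I may write $\md = \md_O$ for an orientation $O$, and I would prove the stronger claim that a \emph{general} $L \in \Pic^{\md}(X)$ has $h^0(X,L) = 0$, by induction on $|V(\GG_X)| + |E(\GG_X)|$ over all connected nodal curves (applying the claim componentwise whenever a disconnected curve arises). The base case $|E(\GG_X)| = 0$, $|V(\GG_X)| = 1$ is the classical fact that a general line bundle of degree $g - 1$ on an integral smooth curve is non-effective. For the inductive step I would split according to Lemma~\ref{lma:acyclic source} and Lemma~\ref{lma:cut cycle}.

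If $O$ is acyclic, it has a source $v$; then $X_v$ is smooth of genus $g_v$ with $\md_v = g_v - 1$, so a general $L$ restricts to a non-effective bundle on $X_v$, and the sequence $0 \to L|_{X_v^c}(-\Sigma) \to L \to L|_{X_v} \to 0$ identifies $H^0(X,L)$ with $H^0(X_v^c, L|_{X_v^c}(-\Sigma))$. A short computation shows the twisted-down bundle has multidegree exactly $\md_{O|_{X_v^c}}$, which is orientable, so by induction this space vanishes for general $L$. If instead $O$ contains a directed cycle, I would normalize $X$ at a node $n$ on an edge of that cycle; since the edge is non-separating, the normalization $\nu \colon X' \to X$ is connected of genus $g - 1$, and $\nu^* L(-q)$, with $q$ one branch over $n$, has an orientable multidegree of total degree $g - 2$. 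Induction gives $h^0(X', \nu^* L(-q)) = 0$ for general $L$, which forces $h^0(X', \nu^* L) = 1$; letting the generator $s_0$ and the gluing parameter $\lambda \in \GG_m$ at $n$ vary, I would show that for general $\lambda$ the descent condition identifying the two branch values of $s_0$ fails, so $h^0(X,L) = 0$.

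The main obstacle is precisely this cyclic case. Peeling off a source is clean, but a totally cyclic orientation (that is, a \emph{stable} $\md$) has neither source nor sink, and the reduction must then pass through a single-node normalization. The delicate point is that normalizing a cycle edge lowers the arithmetic genus by one while keeping the curve connected, so the pulled-back bundle sits in degree equal to the genus rather than one less; controlling the resulting single extra section — in particular verifying that the generic section does not vanish at both branches of the node, so that varying the gluing parameter genuinely removes it — is where the argument requires the most care.
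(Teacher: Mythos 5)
Your proof is correct, but there is nothing in the paper to compare it against line by line: the paper states this proposition purely as a quotation of \cite[Lemma 2.1]{Beauville} and never proves it. The natural in-paper comparison is the proof of Theorem~\ref{thm:theta divisor}(2), which runs on exactly the inductive skeleton of your acyclic case: find a source $v$ (Lemma~\ref{lma:acyclic source}), note that all global sections vanish on $X_v$, identify $H^0(X,L)$ with $H^0\left(X_v^c, L|_{X_v^c}(-X_v \cap X_v^c)\right)$, and check by indegree bookkeeping that the twisted restriction has multidegree $\md_{O|_{X_v^c}}$. The paper can stop there because Theorem~\ref{thm:theta divisor} concerns existence of \emph{effective} bundles; your statement requires producing \emph{non-effective} ones, so you must also treat orientations containing directed cycles (in particular stable multidegrees, which have no source), and your normalization at a cycle edge is the right extra ingredient: the edge is non-separating, so $X'$ is connected of genus $g-1$; choosing $q$ on the branch at the head of the directed edge makes $\mdeg\left(\nu^*L(-q)\right)$ equal to the multidegree of the restricted orientation, so induction applies; and $h^0\left(X',\nu^*L(-q)\right)=0$ says precisely that the unique section $s_0$ of $\nu^*L$ does not vanish at $q$, so all but at most one gluing parameter $\lambda \in \GG_m$ kill it — which resolves the difficulty you flagged, since the bad case $s_0(q_1)=s_0(q_2)=0$ is excluded by construction. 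Your Riemann--Roch direction is also correct, and working with $\chi$ so that a disconnected $Y^c$ causes no harm is indeed the point needing care. The only blemish is a labeling swap: ``non-semistable $\Rightarrow$ every $L$ effective'' is the contrapositive of the \emph{if} direction, while ``semistable $\Rightarrow$ some $L$ with $h^0(X,L)=0$'' is the \emph{only if} direction; since you prove both implications, this is cosmetic, and your argument amounts to a complete, self-contained reconstruction of Beauville's lemma.
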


Note that for $d= g-1$, the expected dimension of the effective locus $W_{\md}(X)$ is $g - 1$. Thus combining with Proposition~\ref{prop:determinantal}, it follows that if $\md$ is semistable, $W_{\md}(X)$ is either empty or a divisor in $\Pic^{\md}(X)$, the so called \emph{Theta divisor}. 
The next example shows, that the effective locus can indeed be empty.
\begin{ex}\label{ex:theta_empty}
Consider a curve $X$ with dual graph $\GG_X$ that has two vertices $v, w$ of weight $0$ and $k$ edges between $v$ and $w$. Let the multidegree $\md$ be given as $\md_v = -1$ and $\md_w = k - 1$ with total degree $g - 1 = k - 2$. Then $\md = \md_O$, where $O$ is given by orienting all edges from $v$ to $w$, and hence $\md$ is semistable by Proposition~\ref{prop:orientable}. See Figure~\ref{figure1} for the case $k = 3$. Let $L$ be any line bundle of multidegree $\md$ on $X$. Since $L$ has negative degree on $X_v$, any global section $s$ of $L$ vanishes along all of $X_v$. In particular, the restriction of $s$ to $X_w$ also vanishes at the $k$ points in $X_v \cap X_w$. Since $L|_{X_w}$ has degree $k - 1$, this is only possible for the zero section. Thus $h^0(X, L) = 0$ for all line bundles of multidegree $\md$ and $W_{\md}(X) = \emptyset$.   
\end{ex}

Recall that a multidegree $\md$ is called effective, if $\md_v \geq 0$ for all vertices $v$ of $\GG_X$.

\begin{thm} \label{thm:theta divisor}
Let $X$ be a stable curve and $\md$ a multidegree of total degree $g - 1$. Then we have for the effective locus $W_{\md}(X) \subset \Pic^{\md}(X):$
\begin{enumerate}
    \item $W_{\md}(X)$ is empty or has pure dimension $g - 1$ if and only if $\md$ is semistable. 
    \item $W_{\md}(X)$ is empty if and only if all irreducible components of $X$ are rational and $\md = \md_O$ with $O$ acyclic. If $\md$ is effective, $W_{\md}(X)$ is not empty.
\end{enumerate}
\end{thm}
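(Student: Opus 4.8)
The plan is to deduce both parts from the determinantal description in Proposition~\ref{prop:determinantal}, Beauville's criterion in Proposition~\ref{prop:beauville hzero}, and the orientation calculus of Subsection~\ref{subsec:orientations}, all organized around a single ``source-peeling'' reduction. For part (1), recall that $\Pic^{\md}(X)$ is irreducible of dimension $g$, and that since $d = g-1$ the expected dimension of $W_{\md}(X)$ is $g-1$. By Proposition~\ref{prop:determinantal}, $W_{\md}(X)$ is therefore either empty, of pure dimension $g-1$, or all of $\Pic^{\md}(X)$. The last case occurs exactly when every line bundle of multidegree $\md$ is effective, which by Proposition~\ref{prop:beauville hzero} happens if and only if $\md$ is not semistable. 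Hence $W_{\md}(X)$ is empty or a divisor precisely when $\md$ is semistable, proving part (1).

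For part (2), I would first isolate a reduction step. Suppose $\md = \md_O$ and $v$ is a source of $O$ with $g_v = 0$, so that $\md_v = -1$. Write $Y^c = \overline{X \setminus X_v}$ and let $O'$ be the restriction of $O$ to $\GG_{Y^c}$. Because every edge at $v$ is oriented away from $v$, the nodes in $X_v \cap Y^c$ all contribute to indegrees in $Y^c$, and a short computation shows that twisting down the restriction $L|_{Y^c}$ by the divisor of these nodes carries multidegree $\md_O$ to $\md_{O'}$. Since $\deg(L|_{X_v}) = -1 < 0$, every global section of $L$ vanishes on $X_v$, hence restricts to a section of $L|_{Y^c}$ vanishing along $X_v \cap Y^c$; conversely any such section extends by zero across $X_v$. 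This yields an isomorphism $H^0(X,L) \cong H^0(Y^c, M)$, where $M$ is the twisted restriction, and as $L$ ranges over $\Pic^{\md_O}(X)$ the bundle $M$ ranges over all of $\Pic^{\md_{O'}}(Y^c)$. Consequently $W_{\md_O}(X) = \emptyset$ if and only if $W_{\md_{O'}}(Y^c) = \emptyset$.

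The proof of part (2) then iterates this step. If $\md$ is not semistable, $W_{\md}(X) = \Pic^{\md}(X)$ is nonempty by part (1), so by Proposition~\ref{prop:orientable} I may assume $\md = \md_O$ and repeatedly remove genus-$0$ sources, obtaining at each stage a possibly disconnected subcurve with a restricted orientation and preserving emptiness of the effective locus. If the process removes every component, the order of removal is a topological order of $O$, so $O$ is acyclic and all components are rational; conversely, when $O$ is acyclic with all $g_v = 0$, Lemma~\ref{lma:acyclic source} supplies a (genus-$0$) source at each stage, restrictions of acyclic orientations remain acyclic, and the process does terminate in the empty curve with $W_{\md}(X) = \emptyset$. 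This settles the ``if'' direction and one half of the equivalence.

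If instead the process halts at a nonempty subcurve $X'$ with orientation $O'$ having no genus-$0$ source, then every vertex $w$ of $\GG_{X'}$ satisfies $\indeg_{O'}(w) \geq 1$ or $g_w \geq 1$, and in either case $\md_{O',w} = g_w - 1 + \indeg_{O'}(w) \geq 0$, so $\md_{O'}$ is effective. An effective multidegree is realized by an effective divisor supported on the smooth locus, whose associated line bundle has a nonzero section, so $W_{\md_{O'}}(X') \neq \emptyset$; tracing back through the reduction gives $W_{\md}(X) \neq \emptyset$. The same effective-divisor argument proves the final assertion, that $W_{\md}(X) \neq \emptyset$ whenever $\md$ itself is effective. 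I expect the main obstacle to be making the reduction step fully rigorous: verifying that the twist-restriction map is surjective onto $\Pic^{\md_{O'}}(Y^c)$, so that emptiness is genuinely equivalent in both directions, and checking that the bookkeeping $\md_O \mapsto \md_{O'}$ together with the section isomorphism remains valid when $Y^c$ is disconnected or fails to be stable.
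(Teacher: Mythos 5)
Your proposal is correct and takes essentially the same route as the paper: part (1) from Proposition~\ref{prop:determinantal} together with Beauville's criterion (Proposition~\ref{prop:beauville hzero}), and part (2) by the same genus-$0$ source-peeling reduction $h^0(X,L) = h^0\left(Y^c, L|_{Y^c}(-X_v \cap Y^c)\right)$ with surjectivity of the twisted restriction on Picard schemes, which the paper merely organizes as an induction on the number of non-loop edges rather than as an iterated process with termination analysis. The issues you flag at the end (disconnected or unstable $Y^c$, surjectivity onto $\Pic^{\md_{O'}}(Y^c)$) are exactly what the paper addresses by proving the statement in the generality of orientable multidegrees on arbitrary nodal curves, and they cause no difficulty.
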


\begin{proof}
    As mentioned above, the first claim follows by Propositions~\ref{prop:determinantal} and \ref{prop:beauville hzero}.
    For the second claim, we show more generally that for an orientable multidegree $\md = \md_O$ on a nodal curve $X$, which is not necessarily stable or connected, $W_{\md}(X) = \emptyset$ if and only if $g_v = 0$ for all $v \in V(\GG_X)$ and $O$ is acyclic. Note first, that to prove this stronger claim, we still may assume that $X$ is connected, since the condition can be checked on each connected component.
    
    We first prove the last claim: if $\md$ is effective, then $W_{\md}(X) \neq \emptyset$. Indeed, in this case let $p_i$ be a collection of smooth points of $X$, $\md_v$ of them contained in $X_v$. Then $\cO_{X_v}(\sum_{i = 1}^{d} p_i)$ is effective of multidegree $\md$. Furthermore, if $\md$ is effective and orientable, either $g_v \neq 0$ for some $v$ or $O$ is not acyclic by Lemma~\ref{lma:acyclic source}. Thus we can assume from now on that $\md$ is not effective.
    
    We show the claim by induction on the number of non-loop edges of $\GG_X$. The base case is if $\GG_X$ contains no non-loop edges, that is, $X$ is irreducible and $\GG_X$ contains a single vertex $v$. Since $\md$ is orientable and not effective, we need to have $g_v = 0$ and $\GG_X$ contains no edges, in which case $\md_v = -1$ and $W_{\md}(X) = \emptyset$ is immediate.
    
    For the induction step, let $v$ be a vertex such that $\md_v < 0$. 
    By Lemma~\ref{lma:acyclic source} (1), we need to have $g_v = 0$ and $v$ is a source in $O$. Let $X_v^c$ be the closure of the complement of $X_v$ in $X$. For a line bundle $L$ on $X$ of multidegree $\md$, let $\md'$ denote the multidegree of $L|_{X_v^c}(-X_v \cap X_v^c)$. Since all vertices are oriented away from $v$ in $O$, $\md'$ is orientable. Namely, it is given by the restriction of $O$ to the dual graph of $X_v^c$. Since any global section of $L$ vanishes along all of $X_v$, we have \begin{equation}\label{eq:theta1}
        h^0(X,L) = h^0\left(X_v^c, L|_{X_v^c}(-X_v \cap X_v^c)\right).
    \end{equation}
    Indeed, $H^0\left(X_v^c, L|_{X_v^c}(-X_v \cap X_v^c)\right)$ is naturally identified with the kernel of the evaluation map $H^0(X,L) \to H^0(X_v, L|_{X_v})$. If all global sections of $L$ vanish along $X_v$, then $H^0(X,L)$ coincides with this kernel and \eqref{eq:theta1} follows. 
    
    
    Now suppose, that there is a line bundle $L$ on $X$ with multidegree $\md$ and $h^0(X,L) \geq 1$. Then also 
    $h^0\left(X_v^c, L|_{X_v^c}(-X_v \cap X_v^c)\right) \geq 0$ by \eqref{eq:theta1}. By induction it follows that there is either $w \in V(\GG_X) \setminus \{v\}$ with $g_w \neq 0$ or $\md' = \md_{O'}$ with $O'$ not acyclic. In the first case, $X_w$ is a non-rational component also of $X$. In the second case, we may extend $O'$ to an orientation $O$ on $\GG_X$ by orienting the edges adjacent to $v$ away from $v$. By construction, we have $\md = \md_O$. If $C$ is an oriented cycle of $O'$, then it is also an oriented cycle of $O$ and hence $O$ is not acyclic.

    Conversely, suppose that $W_{\md}(X)$ is empty. Then also $W_{\md'}(X_v^c)$ is empty by \eqref{eq:theta1}. So by induction, $g_w = 0$ for all $w \in V(\GG_X) \setminus \{v\}$ and $\md' = \md_{O'}$ with $O'$ acyclic. By assumption, we have also $g_v = 0$, and hence all irreducible components of $X$ are rational. We can extend $O'$ to an orientation $O$ with $\md = \md_O$ as above by orienting the edges adjacent to $v$ away from $v$. We are finished if we can show that $O$ is acyclic. To this end, suppose to the contrary that $C$ is an oriented cycle of $O$. By Lemma~\ref{lma:cut cycle}, $C$ contains none of the edges adjacent to $v$ since they are all contained in a directed cut. Thus $C$ would restrict to an oriented cycle of $O'$, which is not possible.
\end{proof}

\begin{rmk} \label{rmk:correction old statement}
Example~\ref{ex:theta_empty} contradicts \cite[Proposition 2.2]{Beauville}, in which it is claimed that for total degree $d = g - 1$, the effective locus is a divisor in $\Pic^{\md}(X)$ if and only if $\md$ is semistable. The possibility of an empty effective locus is not accounted for in the proof of \emph{loc. cit.}, and Theorem~\ref{thm:theta divisor} provides the necessary correction.

In \cite{Beauville} Theta divisors are used to define a polarization on generalized Prym varieties. The multidegree $\md$ giving the Theta divisor is that of a Theta characteristic $L_0$, that is, a line bundle such that $L_0^{\otimes 2}$ is isomorphic to the dualizing sheaf of $X$. Thus the multidegree $\md$ of a Theta characteristic has value $\md_v = g(X_v) - 1 + \frac{|X_v \cap X_v^c|}{2}$ on $X_v$. In particular, $\md$ is effective whenever $X$ is not a disjoint union of smooth rational curves. The subcurves used in \cite[Proposition 5.2 and Theorem 5.4]{Beauville} to define the generalized Prym varieties are not the disjoint union of smooth rational curves. Thus no issue arises, since a Theta divisor exists if $\md$ is effective and semistable by Theorem~\ref{thm:theta divisor}. 

The second main application of Theta divisors concerns the theory of compactified Jacobians. Recall that the compactified Jacobian  admits a stratification with strata $P^{(\md, S)}$, parametrizing line bundles that have stable multidegree $\md$ on the partial normalization of $X$ at a subset of nodes $S$. By Proposition~\ref{prop:orientable}, a stable multidegree $\md$ in degree $g  - 1$ is given by a totally cyclic orientation on the dual graph of this partial normalization, i.e., a subgraph of $\GG_X$. Since a totally cyclic orientation is acyclic only if the graph has no edges, Theorem~\ref{thm:theta divisor} gives that the only stratum in which $W_{\md}(X)$ is not a divisor is the one in which $S$ is the set of all nodes and then also only if $g_v = 0$ for all irreducible components $X_v$ of $X$. But in this case $P^{(\md, S)}$ has dimension $0$ so no issue arises. In particular, the use of \cite[Proposition 2.2]{Beauville} as \cite[Lemma 3.8]{Alexeev} and \cite[Proposition 1.3.7]{caporasotheta} to extend the Theta divisor to compactified Jacobians causes no issues.
\end{rmk}

\subsection{Rational Abel maps and components of Theta divisors}\label{subsec:abel}

We next recall a description of the irreducible components of a Theta divisor $W_{\md}(X)$ given in \cite{coelhoesteves}.

For an irreducible component $X_v$ of $X$, denote by $X_v^{\mathrm{sm}}$ the locus of points in $X_v$ that are smooth points of $X$. Following \cite[Section 1.2.7]{caporasotheta}, we define the \emph{rational Abel map} associated to an effective multidegree $\md$ of total degree $d$ as the map
\[
\alpha_{\md}\colon \prod_{v \in V(\GG_X)} \left ( X_v^{\mathrm{sm}} \right)^{\md_v} \to \Pic^{\md}(X),
\]
given by sending a $d$-tuple of points $(p_1, \dots, p_d)$ to $\mathcal O_X\left(\sum_{i = 1}^d p_i\right)$. We denote the image of $\alpha_{\md}$ by $A_{\md}(X)$; clearly, it is irreducible of dimension at most $\min\{d, g\}$. 

\begin{lma} \label{lma:dimension Abel map}
Let $\md$ be an effective multidegree of total degree $0 \leq d \leq g$. Then the dimension of $A_{\md}(X)$ is $d$ if and only if a general $[L] \in A_{\md}(X)$ satisfies $h^0(X, L) = 1$.  
\end{lma}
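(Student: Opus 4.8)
The plan is to compute $\dim A_{\md}(X)$ via the fiber-dimension theorem applied to the morphism $\alpha_{\md}\colon Y \to A_{\md}(X)$, where $Y = \prod_{v \in V(\GG_X)} (X_v^{\mathrm{sm}})^{\md_v}$. Since each $X_v^{\mathrm{sm}}$ is a nonempty open subset of an irreducible curve, $Y$ is irreducible of dimension $\sum_v \md_v = d$, and $\alpha_{\md}$ is dominant onto $A_{\md}(X)$ by definition; hence for a general $[L] \in A_{\md}(X)$ one has $\dim A_{\md}(X) = d - \dim \alpha_{\md}^{-1}([L])$. The entire statement then reduces to a single claim: the general fiber of $\alpha_{\md}$ has dimension $h^0(X,L) - 1$. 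Granting this, $\dim A_{\md}(X) = d - (h^0(X,L) - 1)$, and since a general $[L] \in A_{\md}(X)$ is effective (so $h^0(X,L) \geq 1$), this equals $d$ precisely when $h^0(X,L) = 1$.

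To establish the claim, I would identify the fiber $\alpha_{\md}^{-1}([L])$ with a space of effective divisors. Forgetting the ordering of a tuple $(p_1,\dots,p_d)$ is finite-to-one (each divisor admits at most $d!$ orderings), so $\dim \alpha_{\md}^{-1}([L])$ equals the dimension of the set $U$ of effective divisors $D$ of multidegree $\md$, supported on the smooth locus, with $\mathcal{O}_X(D) \cong L$. Under the standard correspondence $D \mapsto [s_D]$ between effective divisors $D$ satisfying $\mathcal{O}_X(D) \cong L$ and the projective space $\PP H^0(X,L)$, I would show that $U$ is a dense open subset, whence $\dim U = h^0(X,L) - 1$.

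The heart of the argument, and the main obstacle, is verifying that $U$ is a nonempty Zariski-open subset of $\PP H^0(X,L)$. Nonemptiness is exactly the hypothesis $[L] \in A_{\md}(X)$: such an $L$ is by definition $\mathcal{O}_X(\sum p_i)$ for smooth points with the prescribed multidegree, and the canonical section of the corresponding divisor lies in $U$. For openness, I would describe the complement of $U$ as a finite union of linear conditions: the sections vanishing at a fixed node cut out a hyperplane, and the sections vanishing identically on a component $X_v$ form a linear subspace, so $U$ is the complement of these finitely many subspaces. The last point to check is that on $U$ the multidegree is automatically $\md$: if $s|_{X_v} \not\equiv 0$ and the zero divisor of $s$ avoids the nodes, then $\deg(\operatorname{div}(s) \cap X_v) = \deg(L|_{X_v}) = \md_v$, so imposing the correct multidegree adds no further constraint and does not lower the dimension.

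Finally, I would assemble the pieces: the finite-to-one reduction gives $\dim \alpha_{\md}^{-1}([L]) = \dim U = h^0(X,L) - 1$ for general $[L]$, and combining with the fiber-dimension identity yields the stated equivalence. A small remark would handle the degenerate case $d = 0$ (where $Y$ is a point, $A_{\md}(X) = \{[\mathcal{O}_X]\}$, and $h^0(X,\mathcal{O}_X) = 1$ as $X$ is connected) uniformly, since it fits the same formula.
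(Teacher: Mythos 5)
Your proposal is correct and follows essentially the same route as the paper's proof: both reduce the lemma to the dimension of the general fiber of $\alpha_{\md}$ and identify points of that fiber with sections of $L$ whose divisors avoid the nodes and whole irreducible components, using the canonical section of $\sum p_i$ to see that this locus is dense in $\PP H^0(X,L)$. The only difference is one of packaging: the paper argues the generically-finite-or-not dichotomy (producing a pencil to get a positive-dimensional fiber when $h^0(X,L) \geq 2$), while you compute the exact general fiber dimension $h^0(X,L) - 1$, which yields the same equivalence.
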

\begin{proof}
    The domain of $\alpha_{\md}$ is irreducible of dimension $d$, thus $A_{\md}(X)$ has dimension $d$ if and only if $\alpha_{\md}$ is generically finite. 
    
    The fiber of $\alpha_{\md}$ over a point $[L] = \left [\mathcal O_X\left(\sum_{i = 1}^d p_i\right)\right]$ in the image of $\alpha_{\md}$ consists of tuples $(p_1', \dots, p_d')$ such that $\mathcal O_X\left(\sum_{i = 1}^d p_i \right) \simeq \mathcal O_X\left(\sum_{i = 1}^d p_i' \right)$. In particular, if $h^0(X, L) = 1$ for a general $L$ in $A_{\md}(X)$, then $\alpha_{\md}$ is generically finite. 
    
    Conversely, suppose a line bundle of the form $L = \mathcal O_X\left(\sum_{i = 1}^d p_i \right)$ admits two linearly independent global sections $s_1, s_2$, the first of them vanishing only at the $p_i$. Then the subspace of $H^0(X,L)$ spanned by the $s_i$ consists of sections of the form $a s_1 + b s_2$ for $a,b \in k$. For general choices of $a,b$, these global sections do not vanish along nodes or entire irreducible components of $X$, since this is true for $s_1$. Hence their associated divisor is of the form $p_1' + \dots + p_d'$ for smooth points $p_i'$ and $\alpha_{\md}$ has fibers of dimension at least $1$ over such points $[L]$. Thus if the general $[L] \in A_{\md}$ satisfies $h^0(X,L) \geq 2$, then $\alpha_{\md}$ is not generically finite, as claimed.   \end{proof}

\begin{rmk}\label{rmk:abel}
The assumption of Lemma~\ref{lma:dimension Abel map}, that a general $[L] \in A_{\md}(X)$ satisfies $h^0(X, L) = 1$, is not automatically satisfied, even if $d \leq g$. For example, suppose $X$ contains a smooth rational component $X_v$, and $\md$ is the multidegree with value $\md_v = |X_v \cap X_v^c| + 1$ on $v$ and $0$ on all other irreducible components of $X$. Then every line bundle $L$ in $A_{\md}(X)$ satisfies $h^0(X,L) \geq 2$ even though $\md$ is effective. In this case $\dim\left(A_{\md}(X)\right) < d$ and it follows from Proposition~\ref{prop:determinantal}, that $A_{\md}(X)$ is not an irreducible component of the effective locus $W_{\md}(X)$.
\end{rmk}

Let $L$ be a line bundle of multidegree $\md$ and $Y \subset X$ a subcurve. Let $\nu \colon X^\nu \to X$ be the partial normalization of $X$ at nodes in $Y \cap Y^c$. We obtain the pull-back map \[\nu^* \colon \Pic^{\md}(X) \to \Pic^{\md_Y}(Y) \times \Pic^{\md_{Y^c}}(Y^c),\] where $\md_{Y}$ and $\md_{Y^c}$ are the multidegrees of $L|_Y$ and $L|_{Y^c}$, respectively. Suppose now, that the multidegree $\md'$ of $L|_{Y}(-Y \cap Y^c)$ is effective on $Y$. Then we can define
\[
V \coloneqq \left\{\left[L'(Y \cap Y^c)\right] \mid \, [L'] \in A_{\md'}(Y)\right \} \subset \Pic^{\md_Y}(Y)
\]
and
\[
W_{\md, Y}(X) \coloneqq (\nu^*)^{-1}\left(\Pic^{\md_{Y^c}}(Y^c) \times V\right) \subset \Pic^{\md}(X).
\]
Thus, roughly speaking, $W_{\md, Y}(X)$ is the locus of line bundles on $X$ of multidegree $\md$, that are in the image of a rational Abel map on $Y$ shifted by $Y \cap Y^c$, and arbitrary away from $Y$. 

Recall that by Proposition~\ref{prop:orientable} a multidegree $\md$ of total degree $g - 1$ is semistable if and only if it is orientable, i.e., $\md = \md_O$ for some orientation $O$ on $\GG_X$. 
\begin{prop}{\cite[Theorem 3.6]{coelhoesteves}} \label{prop:coelhoesteves}
Let $X$ be a stable curve and $\md = \md_O$ a semistable multidegree of total degree $g - 1$. Then 
\[
W_{\md}(X) = \bigcup_Y W_{\md, Y}(X),
\]
and the $W_{\md, Y}(X)$ are irreducible divisors in $\Pic^{\md}(X)$. The union is over connected subcurves $Y \subset X$, such that the multidegree associated to the restriction $O|_{\GG_Y}$ of $O$ is effective on the dual graph $\GG_Y \subset \GG_X$, and $\GG_Y$ defines a directed cut with edges oriented towards $\GG_Y$ in $O$.
\end{prop}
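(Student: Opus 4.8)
The plan is to prove the two inclusions of the asserted equality separately, and to read off irreducibility and the divisor property from a fiber--dimension analysis of the pullback map $\nu^{*}$. Call a connected subcurve $Y$ \emph{admissible} if the cut $Y\cap Y^{c}$ is directed towards $\GG_{Y}$ in $O$ and $\md_{O|_{\GG_{Y}}}$ is effective. The inclusion $\bigcup_{Y}W_{\md,Y}(X)\subseteq W_{\md}(X)$ is the easy one: if $[L]\in W_{\md,Y}(X)$ then by construction $L|_{Y}(-Y\cap Y^{c})$ is effective, and extending any of its nonzero sections by zero across the nodes $Y\cap Y^{c}$ produces a nonzero global section of $L$ on $X$, so $[L]\in W_{\md}(X)$.

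For the reverse inclusion I would use that $W_{\md}(X)$ is empty or of pure dimension $g-1$ by Proposition~\ref{prop:orientable} and Theorem~\ref{thm:theta divisor}. Let $W'$ be an irreducible component and $[L]\in W'$ general, so that $h^{0}(X,L)=1$ with a section $s$ unique up to scalar. Let $Y^{c}$ be the union of the irreducible components of $X$ on which $s$ vanishes identically and $Y$ the closure of its complement; then $s$ restricts to a section of $L|_{Y}(-Y\cap Y^{c})$ that is nonzero on every component of $Y$, so $\md'\coloneqq\mdeg\bigl(L|_{Y}(-Y\cap Y^{c})\bigr)$ is effective and $[L]\in W_{\md,Y}(X)$. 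Moreover $Y$ is connected for general $[L]$, since otherwise one could scale $s$ independently on the connected pieces of $Y$ and obtain $h^{0}(X,L)\geq 2$.

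The heart of the argument is to show that this $Y$ is admissible; I would extract this from a dimension count rather than a direct combinatorial analysis of $s$. Writing $c$ for the number of connected components of $Y^{c}$, the kernel of $\nu^{*}$ is a torus of dimension $|Y\cap Y^{c}|-c$, the factor $\Pic^{\md_{Y^{c}}}(Y^{c})$ has dimension $g(\GG_{Y^{c}})$, and $\dim A_{\md'}(Y)\le \deg\md'$. Combining these with the identities $\deg\md_{O}|_{Y}=g(\GG_{Y})-1+c_{\mathrm{in}}(Y)$ (so that $\deg\md'=g(\GG_{Y})-1-c_{\mathrm{out}}(Y)$) and $\chi(\GG_{Y})+\chi(\GG_{Y^{c}})=\chi(\GG_{X})+|Y\cap Y^{c}|$, everything telescopes to
\[
\dim W_{\md,Y}(X)=(g-1)-c_{\mathrm{out}}(Y)-\bigl(\deg\md'-\dim A_{\md'}(Y)\bigr),
\]
where $c_{\mathrm{in}}(Y)$ and $c_{\mathrm{out}}(Y)$ count the cut edges oriented into and out of $\GG_{Y}$. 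Since $W'\subseteq W_{\md,Y}(X)$ has dimension $g-1$, both nonnegative defects must vanish: the cut is directed into $\GG_{Y}$ (whence $\md'=\md_{O|_{\GG_{Y}}}$ is effective and $Y$ is admissible) and $\alpha_{\md'}$ is generically finite. Getting this bookkeeping right --- in particular pinning down the three dimension contributions and the genus identities --- is where I expect the main difficulty to lie.

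It remains to see that every admissible $W_{\md,Y}(X)$ is an irreducible divisor, which then forces it to be an irreducible component of the pure $(g-1)$-dimensional $W_{\md}(X)$; together with the previous paragraph (every component arises as some admissible $Y$) this yields the stated equality. Irreducibility is formal: $A_{\md'}(Y)$ is irreducible as the image of the irreducible domain of $\alpha_{\md'}$, hence so is its translate $V$, and $\Pic^{\md_{Y^{c}}}(Y^{c})$ is irreducible, so the preimage $W_{\md,Y}(X)=(\nu^{*})^{-1}\bigl(\Pic^{\md_{Y^{c}}}(Y^{c})\times V\bigr)$ is irreducible because $\nu^{*}$ is surjective with irreducible (torus) fibers. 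For the dimension, admissibility gives that $\md'=\md_{O|_{\GG_{Y}}}$ is effective and orientable of total degree $g(\GG_{Y})-1$ on the connected nodal curve $Y$, so $W_{\md'}(Y)$ is nonempty of pure dimension $g(\GG_{Y})-1$ by Theorem~\ref{thm:theta divisor}; its general point has $h^{0}=1$ and lies in $A_{\md'}(Y)$, so $\dim A_{\md'}(Y)=g(\GG_{Y})-1=\deg\md'$ by Lemma~\ref{lma:dimension Abel map}. Plugging $c_{\mathrm{out}}(Y)=0$ and this equality into the displayed formula gives $\dim W_{\md,Y}(X)=g-1$, completing the proof.
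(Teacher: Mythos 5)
Note first that the paper does not prove this proposition at all: it is imported verbatim from Coelho--Esteves \cite[Theorem 3.6]{coelhoesteves}, so your attempt has to stand on its own rather than be compared with an internal argument. Much of your scaffolding is sound: the inclusion $\bigcup_Y W_{\md,Y}(X)\subseteq W_{\md}(X)$ by zero-extension of sections is correct; Theorem~\ref{thm:theta divisor}(1) is legitimately available (its proof uses only Propositions~\ref{prop:determinantal}, \ref{prop:orientable} and \ref{prop:beauville hzero}, not this proposition); and your displayed dimension formula $\dim W_{\md,Y}(X)=(g-1)-c_{\mathrm{out}}(Y)-(\deg\md'-\dim A_{\md'}(Y))$ does telescope out correctly --- although, writing $c$ for the number of connected components of $Y^c$, the factor $\Pic^{\md_{Y^c}}(Y^c)$ has dimension equal to the sum of the genera of those components, which is $g(\GG_{Y^c})+c-1$ with the paper's definition \eqref{eq:def genus}, not $g(\GG_{Y^c})$; your bookkeeping slips happen to cancel.

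The genuine gap sits exactly where the real difficulty of the theorem lies: the step ``let $[L]\in W'$ be general, so that $h^0(X,L)=1$''. Nothing you have justifies this. Proposition~\ref{prop:determinantal} bounds dimensions of Brill--Noether loci only from below, so it does not exclude that an entire $(g-1)$-dimensional component $W'$ is contained in $W^1_{\md}(X)$; and Remark~\ref{rmk:abel} shows that loci on which $h^0\geq 2$ holds identically do occur on reducible curves. The statement that a general point of each component of $W_{\md}(X)$ has a unique section is precisely \cite[Proposition 3.5]{coelhoesteves} --- the very result the paper imports as (the first half of) Lemma~\ref{lma:coelhoesteves} --- so invoking it is circular, and proving it is the hard content of Coelho--Esteves' work. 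The same unestablished claim is used a second time at the end, for $W_{\md'}(Y)$, where two further problems accumulate: membership of $L|_Y(-Y\cap Y^c)$ in $A_{\md'}(Y)$ requires the divisor of its section to be supported on smooth points of $Y$, whereas your argument only gives effectivity (sections vanishing at internal nodes of $Y$ must be excluded or handled separately); and you apply Theorem~\ref{thm:theta divisor} to the subcurve $Y$, which need not be stable, while that theorem and Proposition~\ref{prop:beauville hzero} behind it are stated only for stable curves. Finally, even granting all of this, a ``general point plus closure'' argument yields only $W_{\md}(X)=\bigcup_Y\overline{W_{\md,Y}(X)}$; the proposition asserts equality with the sets $W_{\md,Y}(X)$ themselves and that they are divisors, hence closed. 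You would therefore still need to show that the Abel images $A_{\md'}(Y)$ are closed in this degree, and that every special point of $W_{\md}(X)$ --- those with $h^0\geq 2$, or whose sections pass through nodes --- lies in some admissible $W_{\md,Y}(X)$, which no genericity argument can reach.
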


\begin{rmk}
In case that $\md = \md_O$ is a stable multidegree, $O$ contains by Proposition ~\ref{prop:orientable} no directed cut except for the empty one given by $Y = X$. In this case, Proposition~\ref{prop:coelhoesteves} implies that $W_{\md}(X)$ is an irreducible divisor, as was established in \cite[Theorem 3.1.2]{caporasotheta}. Notice also, that Theorem~\ref{thm:theta divisor} is consistent with Proposition~\ref{prop:coelhoesteves} concerning emptiness of the effective locus: one can argue similar as in the proof of Theorem~\ref{thm:theta divisor}, to show that there exists no connected subcurve $Y$ of $X$ as required in Proposition~\ref{prop:coelhoesteves} if and only if $X$ has only rational components and $O$ is acyclic.
\end{rmk}

\begin{lma} \label{lma:coelhoesteves}
Let $W_{\md, Y}(X)$ be an irreducible component of $W_{\md}(X)$ as in Proposition~\ref{prop:coelhoesteves}. Let $p \in X_v$ be a smooth point with $X_v \subset Y$. Then a general line bundle $[L] \in W_{\md, Y}(X)$ satisfies $h^0(X,L) = 1$ and $p$ is not a base point of $L$.  
\end{lma}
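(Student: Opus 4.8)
The plan is to show that for general $[L]\in W_{\md,Y}(X)$ every global section of $L$ is supported on $Y$, which reduces the computation of $h^0(X,L)$ to the rational Abel map on $Y$. Write $\md'=\md_{O|_{\GG_Y}}$ for the effective multidegree of total degree $g(Y)-1$ attached to the restricted orientation; since the cut $Y\cap Y^c$ is directed towards $Y$, one checks $\mdeg\bigl(L|_Y(-Y\cap Y^c)\bigr)=\md'$, so by the definition of $W_{\md,Y}(X)$ we may write $L|_Y\cong L'(Y\cap Y^c)$ with $[L']\in A_{\md'}(Y)$. The cut edges point away from $Y^c$, so they contribute nothing to the in-degrees of the vertices of $\GG_Y^c$; hence $L|_{Y^c}$ has the orientable multidegree $\md_{Y^c}=\md_{O|_{\GG_Y^c}}$ of total degree $g(Y^c)-1$. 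Because $W_{\md,Y}(X)=(\nu^*)^{-1}\bigl(\Pic^{\md_{Y^c}}(Y^c)\times V\bigr)$, for general $[L]$ the restriction $L|_{Y^c}$ is a general element of $\Pic^{\md_{Y^c}}(Y^c)$ and $[L']$ is a general element of $A_{\md'}(Y)$. The first key point is that a general line bundle of multidegree $\md_{Y^c}$ on $Y^c$ is non-effective; granting this, $H^0(Y^c,L|_{Y^c})=0$, so every $s\in H^0(X,L)$ vanishes identically on $Y^c$, hence also at the nodes $Y\cap Y^c$. Conversely, a section of $L'=L|_Y(-Y\cap Y^c)$ extends by zero across $Y^c$, so restriction to $Y$ gives an isomorphism $H^0(X,L)\cong H^0(Y,L')$.

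Next I would pin down $h^0(Y,L')$ by a dimension count. By Proposition~\ref{prop:coelhoesteves}, $W_{\md,Y}(X)$ is an irreducible divisor, so $\dim W_{\md,Y}(X)=g-1$. On the other hand the map $[L]\mapsto[L']$ fibers $W_{\md,Y}(X)$ over $A_{\md'}(Y)$, with fibers of dimension $\dim\Pic^{\md_{Y^c}}(Y^c)+\dim\ker\nu^*$, coming from the free choice of $L|_{Y^c}$ and of the gluing data along the $|Y\cap Y^c|$ nodes. Combined with the standard relation between the arithmetic genera of $X$, $Y$, $Y^c$ and the number of gluing nodes, this forces $\dim A_{\md'}(Y)=g(Y)-1$, which equals the total degree of $\md'$. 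By Lemma~\ref{lma:dimension Abel map} a general $[L']\in A_{\md'}(Y)$ then satisfies $h^0(Y,L')=1$, and with the isomorphism above we conclude $h^0(X,L)=1$ for general $[L]\in W_{\md,Y}(X)$.

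For the base point statement, note that since $h^0(X,L)=1$ the base locus of $L$ is exactly the zero locus of its unique section $s$. We have $s|_{Y^c}\equiv 0$, while $s|_Y$ is the section of $L|_Y\cong L'(Y\cap Y^c)$ induced by the unique section of $L'=\cO_Y(q_1+\cdots+q_{g(Y)-1})$, where the $q_i$ are the general smooth points defining the general point $[L']\in A_{\md'}(Y)$. As $h^0(Y,L')=1$, its divisor on $Y$ is exactly $q_1+\cdots+q_{g(Y)-1}$ together with the nodes $Y\cap Y^c$; in particular $s$ vanishes along no component of $Y$ (on any $X_w\subset Y$ with $\md'_w=0$ it is nowhere zero). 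The given $p\in X_v^{\mathrm{sm}}$ is a smooth point of $X$, hence not one of the nodes, and for a general choice of the $q_i$ it is distinct from all of them. Thus $s(p)\neq 0$, and $p$ is not a base point of $L$.

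The main obstacle is the first step: showing that a general line bundle of the orientable multidegree $\md_{Y^c}$ on $Y^c$ is non-effective. This is the analogue of Beauville's Proposition~\ref{prop:beauville hzero}, but $Y^c$ need be neither connected nor stable, so that statement does not apply verbatim. I would first reduce to the connected case, since both $h^0$ and the effectivity of the restricted multidegree are additive over connected components, and the restriction of $\md_{Y^c}$ to each component is again orientable of total degree one less than the genus of that component. A connected but possibly non-stable component I would then handle either by contracting its destabilizing rational tails and bridges (which does not change $h^0$ of bundles of the relevant multidegree) so as to apply Proposition~\ref{prop:beauville hzero} to the resulting stable curve, or by directly reproving the existence of a non-effective representative in the inductive source-removal style used in the proof of Theorem~\ref{thm:theta divisor}.
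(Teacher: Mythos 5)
Your proof takes a genuinely different route from the paper's, and its architecture is sound. The paper disposes of the first claim by simply citing \cite[Proposition 3.5]{coelhoesteves}; you instead re-derive it from the divisor statement in Proposition~\ref{prop:coelhoesteves}: since $W_{\md,Y}(X)$ fibers over $A_{\md'}(Y)$ with fibers given by $\Pic^{\md_{Y^c}}(Y^c)$ together with the gluing torus, the genus relation among $X$, $Y$, $Y^c$ and $|Y\cap Y^c|$ turns $\dim W_{\md,Y}(X)=g-1$ into $\dim A_{\md'}(Y)=g(Y)-1$, which is the total degree of $\md'$; Lemma~\ref{lma:dimension Abel map} (whose proof does not use stability, so it may legitimately be applied to the possibly non-stable curve $Y$) then gives $h^0(Y,L')=1$ generically, and the identification $H^0(X,L)\cong H^0(Y,L')$ finishes the count. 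What this buys is independence from \cite[Proposition 3.5]{coelhoesteves}, at the price of one extra input. Your base-point argument is essentially the paper's, only routed through $h^0(X,L)=1$, which the paper does not even need: it merely exhibits the extension-by-zero section, whose divisor is $\sum_i p_i$ plus $Y\cap Y^c$ plus $Y^c$, and which is nonzero at $p$ for general $[L]$.

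The one genuine soft spot is exactly the step you flag: that a general line bundle of the orientable multidegree $\md_{Y^c}$ on $Y^c$ is non-effective. This is true, but neither Proposition~\ref{prop:beauville hzero} nor Theorem~\ref{thm:theta divisor} covers it verbatim, since $Y^c$ can be disconnected and its components non-stable. Of your two proposed fixes, the contraction one fails as stated: the restricted orientation need not give degree $0$ on destabilizing components (a rational bridge with both adjacent edges oriented inward carries degree $1$, and a rational tail with its edge oriented outward carries degree $-1$), so $L|_{Y^c}$ does not descend to a stable model and $h^0$ is not visibly preserved; moreover, connected components of genus $0$ or $1$ admit no stable model at all. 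The inductive fix does work: for a connected nodal curve $Z$ with orientation $O$, stratify $W_{\md_O}(Z)$ by the subcurve $W$ on which a section vanishes identically; the stratum with $W=\emptyset$ lies in the image of the space of effective Cartier divisors of degree $g(Z)-1$ and so has dimension at most $g(Z)-1<g(Z)$, while the stratum of a nonempty $W$ maps, with fibers of the expected dimension, to the effective locus on $W^c$ of a multidegree of the form (orientable) minus (effective), which is proper in its Picard component by induction on the number of components after twisting up by general smooth points. So the auxiliary lemma you need is correct and fillable along your second line, but as written your proposal leaves it open, and only one of the two suggested completions is viable.
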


\begin{proof}
 The first claim, that $h^0(X,L) = 1$ for $L$ general in $W_{\md, Y}(X)$, is part of \cite[Proposition 3.5]{coelhoesteves}. The second claim follows, since global sections of $L$ vanishing on $Y^c$ can by construction of $W_{\md, Y}(X)$ be identified with those of a sheaf of the form $\mathcal O_Y\left(\sum_{i = 1}^{k} p_i \right)$ for some smooth points $p_i$. Thus for $L$ such that $p \neq p_i$ for all $i$, $p$ is not a base point of $L$.
\end{proof}

\section{The cases \texorpdfstring{$d = g - 2$}{d = g - 2} and \texorpdfstring{$d = g$}{d = g}}\label{sec:g-2}

In this section, we study effective loci for semistable multidegrees of total degree $d = g - 2$, and its residual case $d = g$. The main results are Proposition~\ref{prop:dimension deg g - 2} and Theorem~\ref{thm:char deg g}.

\subsection{Dimension of special loci} \label{subsec:dimension g}

In \cite{CC}, an analogue in degree $g$ of the description in terms of orientations in degree $g - 1$ of Proposition~\ref{prop:orientable} was established.
Recall that for an orientation $O$ an induced subgraph $\GG_Y \subset \GG_X$ induces a directed cut, if all edges of $\GG_X$ adjacent to $\GG_Y$ but not contained in $\GG_Y$ are directed towards $\GG_Y$ in $O$. 

\begin{lma}{\cite[Lemmas 1.7.4 and 3.3.2]{CC}}\label{lma:orientable g}
A multidegree $\md$  of total degree $g$ is semistable
if and only if there is $v \in V(\GG_X)$ and an orientation $O$ on $\GG_X$, such that $\md = \md_O + v$ and there is no directed cut oriented towards an induced subgraph $\GG_Y$ with $v \in V(\GG_Y)$. If $\md$ is semistable, such an orientation exists for every choice of vertex $v \in V(\GG_X)$.
\end{lma}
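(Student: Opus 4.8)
The plan is to first convert semistability in total degree $g$ into a transparent combinatorial inequality, and then to read off the orientation description from it, using Proposition~\ref{prop:orientable} as the only nontrivial input.

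\textbf{Step 1 (combinatorial reformulation).} Inserting $d = g$ into Definition~\ref{def:semistable}, the semistability of $\md$ reads, for every subcurve $Y \subset X$,
\[
g(Y) - 1 + \frac{2g(Y) - 2 + |Y \cap Y^c|}{2g-2} \le \deg(\md|_Y).
\]
Writing $w_Y = \deg(\omega_X|_Y) = 2g(Y) - 2 + |Y \cap Y^c|$, stability makes $\omega_X$ ample, so $w_Y > 0$, and since $w_Y + w_{Y^c} = 2g-2$ one also gets $w_Y < 2g-2$ for proper nonempty $Y$. Hence $0 < w_Y/(2g-2) < 1$, and as $\deg(\md|_Y)$ is an integer the displayed inequality rounds up cleanly to $\deg(\md|_Y) \ge g(Y)$; for $Y = X$ it reads $g \ge g$ and is automatic. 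I would record this as: $\md$ of total degree $g$ is semistable if and only if $\deg(\md|_Y) \ge g(Y)$ for every proper subcurve $Y \subsetneq X$, the degree-$g$ analogue of Remark~\ref{rmk:Hakimi}.

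\textbf{Step 2 (orientation bookkeeping and the ``if'' direction).} For any orientation $O$, a direct count from the definition of $\md_O$ together with the genus formula \eqref{eq:def genus} yields
\[
\deg(\md_O|_Y) = g(Y) - 1 + c^+(Y),
\]
where $c^+(Y)$ is the number of edges of the cut $Y \cap Y^c$ directed towards $\GG_Y$ in $O$. Writing $\md = \md_O + v$, we get $\deg(\md|_Y) = g(Y) - 1 + c^+(Y) + [v \in Y]$. Testing the criterion of Step 1 on proper $Y$: when $v \in Y$ it reads $c^+(Y) \ge 0$ and always holds; when $v \notin Y$ it reads $c^+(Y) \ge 1$. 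Thus $\md_O + v$ fails semistability precisely when some proper $Y$ has $v \notin Y$ and $c^+(Y) = 0$. Since $X$ is connected such a $Y$ has nonempty boundary, so $c^+(Y) = 0$ means the nonempty cut is entirely directed towards $Z \coloneqq Y^c$ with $v \in V(\GG_Z)$ — exactly a directed cut oriented towards an induced subgraph containing $v$. Contrapositively, $\md_O + v$ is semistable if and only if $O$ admits no directed cut oriented towards an induced subgraph $\GG_Y$ with $v \in V(\GG_Y)$; this is the ``if'' direction for every pair $(O,v)$.

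\textbf{Step 3 (existence for every $v$).} For the converse, let $\md$ be semistable of total degree $g$ and fix any $v$. Using Step 1 I would verify that $\md - v$ is semistable of total degree $g-1$: for $v \in Y$ we have $\deg((\md-v)|_Y) = \deg(\md|_Y) - 1 \ge g(Y) - 1$, and for $v \notin Y$ we have $\deg((\md-v)|_Y) = \deg(\md|_Y) \ge g(Y) \ge g(Y) - 1$, which is the condition of Remark~\ref{rmk:Hakimi}. By Proposition~\ref{prop:orientable} the multidegree $\md - v$ is orientable, so $\md - v = \md_O$, i.e. $\md = \md_O + v$; and since $\md$ is semistable, the equivalence of Step 2 forces this $O$ to have no directed cut oriented towards a subgraph containing $v$. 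As $v$ was arbitrary, this simultaneously gives the ``only if'' direction and the final assertion that a suitable orientation exists for every choice of $v$.

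The technically delicate point is the rounding in Step 1: one must invoke stability to secure the strict bounds $0 < w_Y/(2g-2) < 1$ (so the noninteger threshold rounds up to exactly $g(Y)$) and separately dispatch the boundary case $Y = X$. Everything else is bookkeeping, with the genuine structural input — that a semistable degree-$(g-1)$ multidegree is orientable — supplied by Proposition~\ref{prop:orientable}.
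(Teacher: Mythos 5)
Your proof is correct, but the comparison here is of an unusual kind: the paper does not prove this lemma at all --- it is imported by citation from \cite{CC}, and no argument for it appears anywhere in the text. What you have produced is a self-contained derivation from ingredients that are present in the paper: the reformulation of degree-$g$ semistability as $\deg(\md|_Y) \geq g(Y)$ for proper subcurves (which the paper records without proof at the start of Section~\ref{sec:general}, and which your rounding argument via ampleness of $\omega_X$ justifies correctly), the Hakimi-type criterion of Remark~\ref{rmk:Hakimi}, and orientability of semistable multidegrees of total degree $g-1$ (Proposition~\ref{prop:orientable}). Your bookkeeping identity $\deg(\md_O|_Y) = g(Y) - 1 + c^+(Y)$ is sound under the paper's convention that every edge, loops included, contributes exactly once to the total in-degree (the same convention that makes $\md_O$ have total degree $g-1$), and the case split $v \in Y$ versus $v \notin Y$ correctly translates failure of semistability into a directed cut oriented towards an induced subgraph containing $v$. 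Your Step 3, showing $\md - v$ is semistable of degree $g-1$ and hence orientable, is precisely the mechanism that the paper's Lemma~\ref{lma: g-2 to g-1} runs in the opposite direction: there, the paper \emph{uses} the present lemma to deduce that $\md - v$ is semistable for every $v$, whereas you derive that fact directly from Definition~\ref{def:semistable} and then feed it into Proposition~\ref{prop:orientable}. What your route buys is independence from the external reference, turning a black-box citation into a consequence of results proved in the paper; it also makes transparent that the final assertion (``for every choice of $v$'') comes for free from the same argument. One point you handle implicitly but correctly: the directed cuts in the statement must be read as nonempty, i.e.\ defined by proper induced subgraphs, since the empty cut defined by $\GG_Y = \GG_X$ would otherwise render the condition vacuously unsatisfiable.
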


\begin{rmk}\label{rmk:partial orientation}
Recall that if $\mdeg(L) = \md_O$ is an orientable multidegree, then $\mdeg(\omega_X \otimes L^{-1}) = \md_{O'}$, where $O'$ is the orientation obtained from $O$ by reversing the direction of all edges (see Lemma~\ref{lma:orientation_residual}). Since semistability is preserved in passing to the residual by Lemma~\ref{lma:semistable residual},  Lemma~\ref{lma:orientable g} thus gives the following characterization in total degree $d = g-2$. A multidegree $\md$ of total degree $d = g- 2$ is semistable if and only if for every vertex $v \in V(\GG_X)$, we have $\md = \md_O - v$ with $O$ an orientation on $\GG_X$ such that there is no directed cut oriented away from an induced subgraph $\GG_Y$ with $v \in V(\GG_Y)$. If $\md$ is semistable, such an orientation exists for every choice of vertex $v \in V(\GG_X)$.
\end{rmk}

The characterizations of semistability in Lemma~\ref{lma:orientable g} and Remark~\ref{rmk:partial orientation} can be encoded conveniently by biorienting one edge, respectively leaving one edge unoriented in the orientation. The condition then is that all directed cuts are oriented away from the bioriented edge, respectively towards the unoriented edge. See \cite{CC} for examples in degree $g$ and Figure~\ref{figure2} for an illustration in degree $g-2$.

\tikzset{every picture/.style={line width=0.75pt}}
    
    \begin{figure}[ht]
    \begin{tikzpicture}[x=0.7pt,y=0.7pt,yscale=-0.8,xscale=0.8]
    \import{./}{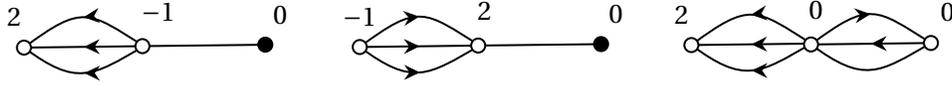}
    \end{tikzpicture}	
    \caption{
     Multidegrees given by partial orientations as in Remark~\ref{rmk:partial orientation}, where bold vertices have weight $1$ and circled ones weight $0$. Only the multidegree in the middle picture is semistable.
    } \label{figure2}
\end{figure}

\begin{lma} \label{lma: g-2 to g-1}
Let $X$ be a stable curve and $\md$ a multidegree of total degree $d$.
\begin{enumerate}
    \item If $d = g - 2$, $\md$ is semistable if and only if $\md + v$ is semistable for all vertices $v \in V(\GG_X)$.
    \item If $d = g$, $\md$ is semistable if and only if $\md - v$ is semistable for all vertices $v \in V(\GG_X)$.
\end{enumerate}
\end{lma}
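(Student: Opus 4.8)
The plan is to prove Lemma~\ref{lma: g-2 to g-1} by reducing the two statements to each other via residuals and then to the combinatorial characterization of semistability. First I would observe that the two claims are equivalent to one another: if $\md$ has total degree $g-2$, then its residual $\md' = \mdeg(\omega_X) - \md$ has total degree $g$, and by Lemma~\ref{lma:semistable residual} $\md$ is semistable iff $\md'$ is. Moreover $\md + v$ corresponds under residuation to $\md' - v$, since $\mdeg(\omega_X) - (\md + v) = \md' - v$. Hence statement~(1) for $\md$ is literally statement~(2) for $\md'$, and it suffices to prove one of them; I would prove~(2), say, and deduce~(1) by passing to residuals.

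For the forward direction of~(2), suppose $\md$ is semistable of total degree $g$. Fix a vertex $v$; I want to show $\md - v$ is semistable of total degree $g-1$, which by Proposition~\ref{prop:orientable} means showing it is orientable. The natural route is Lemma~\ref{lma:orientable g}: since $\md$ is semistable, for the chosen vertex $v$ there is an orientation $O$ with $\md = \md_O + v$ such that no directed cut is oriented towards an induced subgraph $\GG_Y$ containing $v$. Then $\md - v = \md_O$ is visibly orientable, hence semistable of total degree $g-1$ by Proposition~\ref{prop:orientable}. This direction is essentially immediate from the stated characterization.

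The converse is where the work lies. Assume $\md - v$ is semistable (equivalently orientable) of total degree $g-1$ for \emph{every} vertex $v \in V(\GG_X)$; I must show $\md$ itself is semistable of total degree $g$. The plan is again to invoke Lemma~\ref{lma:orientable g}: it suffices to produce, for some vertex $v$, an orientation $O$ with $\md = \md_O + v$ and no directed cut oriented towards an induced subgraph containing $v$. Fixing any $v$, orientability of $\md - v$ gives an orientation $O$ with $\md - v = \md_O$, so $\md = \md_O + v$. The remaining task is to arrange, possibly by modifying $O$, that no directed cut points towards a subgraph containing $v$. If such a bad directed cut existed, towards $\GG_Y \ni v$, I would reverse all edges of that cut; by Lemma~\ref{lma:cut cycle} this keeps the orientation acyclic-versus-cyclic structure controlled and changes $\md_O$ by adding $1$ on the sources of the cut, so I expect to reduce to the semistability hypothesis for a different vertex, namely a vertex in $\GG_Y^c$ adjacent to the cut. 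The main obstacle will be showing this edge-reversal procedure terminates and produces a legitimate orientation witnessing the condition of Lemma~\ref{lma:orientable g}; equivalently, translating "$\md - v$ is orientable for all $v$" into the precise existence of a cut-free orientation. I expect to handle this by arguing directly with Inequality~\eqref{eq: upper bound} of Definition~\ref{def:semistable}: unwinding semistability of $\md$ in degree $g$ amounts to the inequalities $\deg(\md|_Y) \geq g(\GG_Y)$ for subcurves $Y$ not containing some distinguished vertex and $\geq g(\GG_Y) - 1$ otherwise (as in Remark~\ref{rmk:Hakimi}), and these should follow from the degree-$(g-1)$ semistability inequalities for the various $\md - v$ by choosing $v \in V(\GG_Y)$ appropriately. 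This subcurve-inequality comparison, rather than the orientation bookkeeping, is the cleanest path and is where I would concentrate the argument.
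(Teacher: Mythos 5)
Your overall architecture coincides with the paper's: both prove the degree-$g$ statement and deduce the degree-$(g-2)$ statement by passing to residuals via Lemma~\ref{lma:semistable residual}, and your forward direction (semistability of $\md$ gives, for each $v$, an orientation $O$ with $\md = \md_O + v$ by Lemma~\ref{lma:orientable g}, so $\md - v = \md_O$ is orientable and hence semistable by Proposition~\ref{prop:orientable}) is exactly the paper's. For the converse the paper stays inside the orientation picture: it fixes one $v$ with $\md - v = \md_O$, supposes a directed cut oriented towards some $\GG_Y$ containing $v$, and observes that then the restriction of $\md$ to $\GG_Y^c$ is induced by the restriction of $O$, so it has total degree exactly $g(\GG_Y^c) - 1$; subtracting a vertex $w' \in V(\GG_Y^c)$ then violates the inequality of Remark~\ref{rmk:Hakimi} on $\GG_Y^c$, contradicting semistability of $\md - w'$, and Lemma~\ref{lma:orientable g} concludes. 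Your final route --- the direct subcurve-inequality comparison, abandoning the edge-reversal idea (a good call: the paper's cut argument is the clean one-step version of that instinct) --- is more elementary and bypasses Lemma~\ref{lma:orientable g} entirely in the converse: for every proper subcurve $Y$, choosing $v \in V(\GG_Y)$ and applying Remark~\ref{rmk:Hakimi} to the semistable multidegree $\md - v$ gives $\deg(\md|_Y) - 1 \geq g(Y) - 1$, i.e.\ $\deg(\md|_Y) \geq g(Y)$.

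There is, however, one concrete error you must repair: your stated ``unwinding'' of degree-$g$ semistability is false. Semistability in degree $g$ is \emph{not} equivalent to ``$\deg(\md|_Y) \geq g(Y)$ for subcurves $Y$ avoiding some distinguished vertex and $\deg(\md|_Y) \geq g(Y) - 1$ otherwise''; that condition is strictly weaker. For example, take two vertices of weight $1$ joined by a single edge (so $g = 2$, and the curve is stable) and $\md = (2,0)$: with the degree-$0$ vertex as distinguished vertex your inequalities all hold, yet $\md$ is not semistable, since Definition~\ref{def:semistable} applied to the degree-$0$ component requires $1/2 \leq 0$. The correct unwinding, recorded in the paper at the start of Section~\ref{sec:general}, is that for $d = g$ and $X$ stable, semistability is equivalent to $\deg(\md|_Y) \geq g(Y)$ for \emph{every} proper subcurve $Y$; this follows from Definition~\ref{def:semistable} because $0 < \bigl(2g(Y) - 2 + |Y \cap Y^c|\bigr)/(2g-2) < 1$ for proper $Y$ and degrees are integers. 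As written, the condition you reduce to does not imply the conclusion, so the proof has a gap; fortunately the gap closes at no cost, because your ``choose $v \in V(\GG_Y)$'' derivation establishes the inequality $\deg(\md|_Y) \geq g(Y)$ for every proper subcurve $Y$ (each such $Y$ contains a vertex), which is the full, correct criterion.
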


\begin{proof}
    We show the second claim and assume $d = g$. The first statement for $d = g - 2$ follows by Lemma~\ref{lma:semistable residual} from the second statement via passing to residuals. 
    So suppose first, that $\md$ is semistable of total degree $g$. Then Lemma~\ref{lma:orientable g} implies that $\md - v$ is orientable for every  vertex $v \in V(\GG_X)$, and hence  $\md - v$ is semistable by Proposition~\ref{prop:orientable}.
    
    Conversely, suppose $\md - v$ is semistable for every vertex $v \in V(\GG_X)$. Again by Proposition~\ref{prop:orientable}, it follows that $\md_O = \md - v$ for some orientation $O$. To conclude that $\md$ is semistable, it suffices by Lemma~\ref{lma:orientable g} to show, that for every directed cut that is oriented towards an induced subgraph $\GG_Y$ we have $v \not \in V(\GG_Y)$. 
    
    Suppose to the contrary, that $v \in V(\GG_Y)$ and $\GG_Y$ induces a directed cut oriented towards $\GG_Y$. Let $\GG_Y^c$ be the dual graph of $Y^c = \overline{X \setminus Y}$. Since by assumption all edges that are directed towards a vertex in $\GG_Y^c$ are already contained in $\GG_Y^c$, the restriction of $\md$ to $\GG_Y^c$ is induced by the restriction of $O$ to $\GG_Y^c$. In particular, we have $\sum_{X_w \subset Y^c} \md_w = g(\GG_Y^c) - 1$. Now fix a $w' \in V(\GG_Y^c)$ and consider  $\md - w'$. Then \[\sum_{X_w \subset Y^c} (\md- w')_w = g(\GG_Y^c) - 2 < g(\GG_Y^c) - 1\] and thus $\md - w'$ is not semistable by Remark~\ref{rmk:Hakimi}. This contradicts the assumption.
\end{proof}

\begin{prop}\label{prop:dimension deg g - 2}
Let $X$ be a stable curve and $\md$ a semistable multidegree of total degree $d$. 
\begin{enumerate}
    \item If $d = g-2$, the effective locus $W_{\md}(X)$ is either empty, or of pure dimension $g - 2$.
    \item If $d = g$, the special locus $W^1_{\md}(X)$ is either empty, or of pure dimension $g - 2$.
\end{enumerate}
\end{prop}

\begin{proof}
     Suppose first $d = g -2$.
     If $W_{\md}(X) = \emptyset$, there is nothing to show. Otherwise, let $W \subset W_{\md}(X)$ be an irreducible component. Let $v \in V(\GG_X)$ be a vertex of the dual graph. By Proposition~\ref{prop:orientable} and Lemma~\ref{lma: g-2 to g-1}, we have $\md + v = \md_O$ for some orientation $O$ of $\GG_X$. Choose a smooth point $p \in X_v$ and consider the isomorphism 
    \[
    \varphi_p \colon \Pic^{\md}(X) \to \Pic^{\md_O}(X), [L] \mapsto [L(p)].
    \]
    If $L$ is effective, then so is $L(p)$. Thus the image $\varphi_p(W)$ is contained in an irreducible component of the effective locus in $\Pic^{\md_O}(X)$. Recall that Proposition~\ref{prop:coelhoesteves} gives a description of such irreducible components $W_{\md_O, Y}(X)$. Here $Y$ is a subcurve such that $\GG_Y$ induces a directed cut  in $O$, oriented towards $\GG_Y$. So we may assume $\varphi_p(W) \subset W_{\md_O, Y}(X)$. 
    
    By Remark~\ref{rmk:partial orientation}, we need to have $X_v \subset Y$ since the cut induced by $\GG_Y$ is directed towards $\GG_Y$ in $O$ and $\md$ is semistable. Thus we can apply Lemma~\ref{lma:coelhoesteves}, and obtain that
    for a general line bundle $[L'] \in W_{\md_O, Y}(X)$, we have $h^0(X, L') = 1$ and $p$ is not a base point of $L'$. This in turn implies that
    $\varphi_p(W) \neq W_{\md_O, Y}(X)$ since either $p$ is a base point of $L(p)$ or $h^0(X,L(p)) = h^0(X,L) + 1 \geq 2$. By Theorem~\ref{thm:theta divisor}, we have $\dim\left(W_{\md_O, Y}(X)\right) = g - 1$ since $\md_O$ is semistable. Since $\varphi_p(W)$ is a closed subset of  $W_{\md_O, Y}(X)$, this implies $\dim(W) \leq g - 2$. The claim now follows from Proposition~\ref{prop:determinantal}, since the expected dimension of $W$ is $d = g - 2$.

    For $d = g$ the claim follows from the case of $d = g -2$ by passing to residuals (see Remark~\ref{rmk:residual}).
\end{proof}

\subsection{Characterizing semistability} \label{subsec:semistability g}

Recall that Theorem~\ref{thm:theta divisor} (1) states that in degree $g - 1$ the effective locus is empty or of expected dimension if and only if $\md$ is semistable. Thus Proposition~\ref{prop:dimension deg g - 2} generalizes the `if' part of this statement to degree $g - 2$. The `only if' part no longer holds, as the following examples show.  

\begin{ex}\label{ex:g-2 1}
Let $\GG_X$ be the graph with three vertices $v_1, v_2, v_3$ of weight $(0, 0, 1)$, a triple edge between $v_1$ and $v_2$, and a single edge between $v_2$ and $v_3$. Consider the multidegree $\md = (2, -1, 0)$. Its total degree is $1 = g - 2$ and one checks that $\md$ is not semistable. See the left picture in Figure~\ref{figure2} for an illustration. However, the effective locus $W_{\md}(X)$ is empty in this case. Notice also, that the multidegree $(-1, 2, 0)$ on the same graph, as in the middle picture in Figure~\ref{figure2}, is semistable with empty effective locus. 
\end{ex}

\begin{ex}\label{ex:g-2 2}
Let $\GG_X$ be the graph with vertices $v_1, v_2, v_3$, all of weight $0$, and three edges between each of the pairs $v_1, v_2$ and $v_2, v_3$. Denote by $X_i$ the irreducible component of $X$ corresponding to $v_i$. Consider the multidegree $\md = (2, 0, 0)$. Its total degree is $2 = g - 2$ and one checks that $\md$ is not semistable. See the right picture in Figure~\ref{figure2} for an illustration. Effective line bundles $[L] \in \Pic^{\md}(X)$ are exactly those with $L|_{X_2 \cup X_3} = \mathcal O_{X_2 \cup X_3}$. Thus $W_{\md}(X)$ is irreducible and $2$-dimensional, the two parameters being given by the gluing data along the three nodes in $X_1 \cap X_2$. 
\end{ex}

Note that for the dual graph in Example~\ref{ex:g-2 1}, the effective locus for a semistable multidegree of total degree $g -1$ is never empty by Theorem~\ref{thm:theta divisor} (2) since there is $v$ with $g_v \geq 1$, whereas it is empty for the semistable multidegree $(-1, 2, 0)$ of total degree $g - 2$. In general, we do not know, how to generalize the second part of Theorem~\ref{thm:theta divisor} to degree $g-2$, that is: 

\begin{Question}
When is the effective locus for a semistable multidegree of total degree $g - 2$ empty? 
\end{Question}

Examples~\ref{ex:g-2 1} and \ref{ex:g-2 2} show in particular, that semistability in degree $g-2$ cannot be characterized by the existence of non-special line bundles as in Proposition~\ref{prop:beauville hzero} for degree $g-1$. We next give a refinement that does allow for a similar characterization.

\begin{lma}\label{lma:char deg g-2}
Let $X$ be a stable curve and $\md$ a multidegree of total degree $g - 2$. Then $\md$ is semistable if and only if
there is a line bundle $L$ of multidegree $\md$ such that $h^0(X,L) = 0$ and for every irreducible component $X_v$ of $X$ and a general point $p_v \in X_v$, we still have $h^0\left(X,L(p_v)\right) = 0$.
\end{lma}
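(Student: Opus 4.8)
The plan is to reduce everything to the degree $g-1$ situation via the two results already available: Proposition~\ref{prop:beauville hzero}, which says a multidegree of total degree $g-1$ is semistable exactly when it admits a non-effective line bundle, and Lemma~\ref{lma: g-2 to g-1}(1), which says $\md$ is semistable exactly when $\md + v$ is semistable for every $v \in V(\GG_X)$. The bridge between the two degrees is that for a smooth point $p_v \in X_v^{\mathrm{sm}}$ the twist $L(p_v)$ has multidegree $\md + v$.

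I would dispatch the `if' direction first, since it is immediate. Assuming $L$ as in the statement, for each vertex $v$ the hypothesis yields a general point $p_v \in X_v^{\mathrm{sm}}$ with $h^0(X, L(p_v)) = 0$; as $L(p_v)$ has multidegree $\md + v$, Proposition~\ref{prop:beauville hzero} makes $\md + v$ semistable, and then Lemma~\ref{lma: g-2 to g-1}(1) makes $\md$ semistable. Only the condition on the twists is used here, the condition $h^0(X,L) = 0$ being automatic from the inclusion $L \hookrightarrow L(p_v)$, which gives $h^0(X,L) \le h^0(X, L(p_v))$.

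For the `only if' direction I must build a single $L$ of multidegree $\md$ that works simultaneously for all $v$, which is where the real content lies. Starting from $\md$ semistable, Lemma~\ref{lma: g-2 to g-1}(1) gives that each $\md + v$ is semistable of total degree $g-1$, so by Theorem~\ref{thm:theta divisor}(1) the locus $W_{\md+v}(X)$ is empty or a divisor. For each $v$ I would consider the incidence set
\[
Z_v = \left\{ (L, p) \in \Pic^{\md}(X) \times X_v^{\mathrm{sm}} \mid h^0(X, L(p)) > 0 \right\},
\]
which is closed, being the preimage of $W_{\md+v}(X)$ under the morphism $(L, p) \mapsto [L(p)]$. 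Projecting $Z_v$ to $X_v^{\mathrm{sm}}$ exhibits every fiber as an isomorphic translate of $W_{\md+v}(X)$, so $\dim Z_v \le g$. Projecting instead to $\Pic^{\md}(X)$, let $B_v$ be the locus of $[L]$ whose fiber is all of $X_v^{\mathrm{sm}}$ — equivalently, where no general twist $L(p)$ is non-effective. Since $X_v^{\mathrm{sm}}$ is an irreducible curve, $B_v$ is the closed locus where the fiber has dimension $1$, and then $\dim B_v + 1 \le \dim Z_v = g$ forces $B_v$ to be a proper closed subset.

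The proof then concludes by genericity: $W_{\md}(X)$ is itself proper and closed, being empty or of dimension $g-2$ by Proposition~\ref{prop:dimension deg g - 2}, so in the irreducible variety $\Pic^{\md}(X)$ the complement of $W_{\md}(X) \cup \bigcup_v B_v$ is nonempty, and any $[L]$ in it has $h^0(X,L) = 0$ together with a general non-effective twist on each component. The step I expect to be the main obstacle is exactly the bound $\dim Z_v \le g$ and the resulting properness of $B_v$: one has to rule out the degenerate possibility that the translates $L(p)$ run through $W_{\md+v}(X)$ for a whole $g$-dimensional family of $L$, and it is precisely the fact that $W_{\md+v}(X)$ is only a divisor — that is, Theorem~\ref{thm:theta divisor}(1) — which makes this impossible.
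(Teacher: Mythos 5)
Your proposal is correct and takes essentially the same route as the paper: both directions rest on the same key inputs (Proposition~\ref{prop:beauville hzero} and Lemma~\ref{lma: g-2 to g-1} for the ``if'' direction, Theorem~\ref{thm:theta divisor} plus a genericity argument in $\Pic^{\md}(X)$ for the converse). The only difference is organizational: the paper fixes one smooth point $p_v$ per component and intersects the dense open sets $\varphi_{p_v}^{-1}\left(\Pic^{\md + v}(X) \setminus W_{\md+v}(X)\right)$, leaving the ``general point'' claim to semicontinuity, whereas you package the same genericity into an incidence variety $Z_v$ over all points at once -- your dimension bound $\dim Z_v \leq g$ is precisely the statement that each such pullback is a proper closed subset.
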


\begin{proof}
    If there is a line bundle $L$ of multidegree $\md$ as described in the lemma, pick a smooth point $p_v \in X_v$ for every irreducible component $X_v$ such that $h^0(X, L(p_v)) = 0$. Thus $\md + v$ is semistable for every $v \in V(\GG_X)$ by Proposition~\ref{prop:beauville hzero} and $\md$ is semistable by Lemma~\ref{lma: g-2 to g-1}. 
    
    Conversely, suppose $\md$ is semistable. Then $\md + v$ is semistable for all vertices $v \in V(\GG_X)$ by Lemma~\ref{lma: g-2 to g-1}. Choose a smooth point $p_v \in X_v$ on each irreducible component $X_v$. This gives the isomorphism \[\varphi_{p_v} \colon \Pic^{\md}(X) \to \Pic^{\md + v}(X), [L] \mapsto [L(p_v)].\] Under this identification, there is a dense open set  $U_v \subset \Pic^{\md}(X)$ such that for all line bundles $[L] \in U_v$ we have $h^0(X,L(p_v)) = 0$ by Theorem~\ref{thm:theta divisor}, since $\md + v$ is semistable. Then an $[L]$ in the intersection of the finitely many $U_v$ satisfies the conditions in the lemma by construction.
\end{proof}

The residual statement of Lemma~\ref{lma:char deg g-2} gives the statement of Theorem~\ref{thm:dominant abel}. To prove it, we will need the following easy consequence of the Riemann Roch theorem:
\begin{lma}\label{lma:RR base point}
Let $X$ be a stable curve, $p \in X$ a smooth point and $L$ a line bundle on $X$. Then $p$ is a base point of $L$ if and only if it is not a base point of $\left(\omega_X \otimes L^{-1}\right)(p)$. 
\end{lma}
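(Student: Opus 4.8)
The plan is to translate both base-point conditions into comparisons of dimensions of spaces of global sections, and then to link them via Serre duality and the Riemann--Roch Theorem. Throughout I write $M = \left(\omega_X \otimes L^{-1}\right)(p)$, so that $M(-p) = \omega_X \otimes L^{-1}$ and $M = \omega_X \otimes \left(L(-p)\right)^{-1}$; note that twisting by $\pm p$ is well defined precisely because $p$ is a smooth point of $X$.

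First I would record the elementary observation that, for any line bundle $N$ on $X$ and a smooth point $p$, the point $p$ is a base point of $N$ if and only if $h^0(X, N) = h^0(X, N(-p))$. Indeed, the short exact sequence $0 \to N(-p) \to N \to N|_p \to 0$, with $N|_p$ the one-dimensional skyscraper supported at the smooth point $p$, identifies $H^0(X, N(-p))$ with the subspace of sections of $N$ vanishing at $p$; thus $p$ is not a base point exactly when this inclusion is proper, that is, when $h^0(X, N) = h^0(X, N(-p)) + 1$.

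Applying this to $N = L$ and to $N = M$, set
\[
a = h^0(X, L) - h^0(X, L(-p)), \qquad b = h^0(X, M) - h^0(X, M(-p)).
\]
Each of $a, b$ lies in $\{0, 1\}$, and the assertion of the lemma is precisely that $a = 0$ if and only if $b = 1$; hence it suffices to prove $a + b = 1$. Using Serre duality on the Gorenstein curve $X$, together with the identities $M = \omega_X \otimes \left(L(-p)\right)^{-1}$ and $M(-p) = \omega_X \otimes L^{-1}$, I would rewrite $h^0(X, M) = h^1(X, L(-p))$ and $h^0(X, M(-p)) = h^1(X, L)$, so that $b = h^1(X, L(-p)) - h^1(X, L)$. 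Then
\[
a + b = \bigl(h^0(X,L) - h^1(X,L)\bigr) - \bigl(h^0(X, L(-p)) - h^1(X, L(-p))\bigr),
\]
which by the Riemann--Roch Theorem equals $\deg(L) - \deg\left(L(-p)\right) = 1$. This gives $a + b = 1$ and hence the lemma.

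There is essentially no serious obstacle: the only points requiring care are that $p$ being smooth makes the skyscraper quotient $N|_p$ have length one and the twists $L(\pm p)$, $M(\pm p)$ well defined, and that Serre duality is available in the form $h^1(X, N) = h^0\left(X, \omega_X \otimes N^{-1}\right)$ because $X$ is nodal and hence Gorenstein with dualizing sheaf $\omega_X$. Once these are in place, the identity $a + b = 1$ follows immediately from additivity of the Euler characteristic, and the biconditional in the statement is just its restatement.
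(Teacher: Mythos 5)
Your proof is correct and takes essentially the same route as the paper: both apply the Riemann--Roch Theorem (with Serre duality built in) to $L$ and to $L(-p)$, exploit the identity $\left(\omega_X \otimes L^{-1}\right)(p) = \omega_X \otimes \left(L(-p)\right)^{-1}$, and subtract the two resulting equations. Your explicit bookkeeping with $a, b \in \{0,1\}$ via the skyscraper sequence merely spells out what the paper leaves implicit.
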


\begin{proof}
Applying the Riemann Roch theorem gives on the one hand \[h^0\left(X,L\right) - h^0\left(X, \omega_{X} \otimes L^{-1}\right) = d - g + 1\] and on the other \[h^0\left(X,L(-p)\right) - h^0\left(X, (\omega_{X} \otimes L^{-1})(p)\right) = d - 1 - g + 1.\]
Hence $h^0(X, L) = h^0\left(X, L(-p)\right)$ if and only if $h^0\left(X, \omega_{X} \otimes L^{-1}\right) = h^0\left(X, (\omega_{X} \otimes L^{-1})(p)\right) - 1$.
\end{proof}

Recall that we defined the rational Abel map in Subsection~\ref{subsec:abel}.

\begin{thm} \label{thm:char deg g}
Let $X$ be a stable curve and $\md$ a multidegree of total degree $g$. Then the following are equivalent:
\begin{enumerate}
    \item The multidegree $\md$ is semistable. 
    \item There is a line bundle $L$ of multidegree $\md$ such that $h^0(X,L) = 1$ and non-zero global sections of $L$ do not vanish along a whole irreducible component $X_v$ of $X$.
    \item The multidegree $\md$ is effective and the associated rational Abel map $\alpha_{\md}$ is dominant. 
\end{enumerate}
\end{thm}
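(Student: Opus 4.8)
The plan is to prove Theorem~\ref{thm:char deg g} by establishing the chain of implications $(1) \Rightarrow (2) \Rightarrow (3) \Rightarrow (1)$, making heavy use of the residual correspondence between total degrees $g$ and $g-2$ together with Lemma~\ref{lma:char deg g-2} and Lemma~\ref{lma:RR base point}. The conceptual core is that condition~(2) in degree $g$ is exactly the residual (under $L \mapsto \omega_X \otimes L^{-1}$) of the condition appearing in Lemma~\ref{lma:char deg g-2} for degree $g-2$, and condition~(3) is a geometric reformulation of~(2) via Lemma~\ref{lma:dimension Abel map}.

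\medskip

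\noindent\textbf{The implication $(1) \Rightarrow (2)$.} First I would pass to the residual: if $\md$ is semistable of total degree $g$, then $\md' \coloneqq \mdeg(\omega_X) - \md$ is semistable of total degree $g-2$ by Lemma~\ref{lma:semistable residual}. Applying Lemma~\ref{lma:char deg g-2} to $\md'$ yields a line bundle $M$ of multidegree $\md'$ with $h^0(X,M) = 0$ and $h^0(X, M(p_v)) = 0$ for a general smooth point $p_v$ on each component $X_v$. Set $L \coloneqq \omega_X \otimes M^{-1}$, which has multidegree $\md$. By Riemann-Roch and $h^0(X,M) = 0$ we get $h^0(X,L) = g - (g-2) - 1 \cdot(\dots)$; more precisely $h^0(X,L) = \deg(L) - g + 1 + h^0(X,M) = g - g + 1 + 0 = 1$. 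For the non-vanishing-along-components property: the vanishing $h^0(X, M(p_v)) = 0$ translates, via Lemma~\ref{lma:RR base point} (or directly via Riemann-Roch as in Lemma~\ref{lma:special residual}), into the statement that $p_v$ is not a base point of $L = \omega_X \otimes M^{-1}$; since this holds for a general $p_v$ on each $X_v$, no nonzero section of $L$ can vanish identically along any $X_v$. This gives~(2).

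\medskip

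\noindent\textbf{The implications $(2) \Rightarrow (3)$ and $(3) \Rightarrow (1)$.} For $(2) \Rightarrow (3)$ I would first argue that~(2) forces $\md$ to be effective: if $\md_v < 0$ for some $v$, every global section of $L$ would have to vanish along $X_v$, contradicting the hypothesis that sections do not vanish along whole components (here one uses that $h^0(X,L)=1$, so a nonzero section exists). Once $\md$ is effective, the rational Abel map $\alpha_{\md}$ is defined; the section $s$ of $L$ from~(2) vanishes at a divisor of smooth points supported componentwise according to $\md$ (no vanishing along components and, by genericity, away from nodes), exhibiting $[L]$ in the image $A_{\md}(X)$ with $h^0(X,L)=1$, so by Lemma~\ref{lma:dimension Abel map} $\dim A_{\md}(X) = g$ and $\alpha_{\md}$ is dominant onto the $g$-dimensional $\Pic^{\md}(X)$. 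For $(3) \Rightarrow (1)$, dominance of $\alpha_{\md}$ gives by Lemma~\ref{lma:dimension Abel map} a general $[L] \in A_{\md}(X)$ with $h^0(X,L)=1$ whose unique section vanishes only at prescribed smooth points, so no section vanishes along a component; residuating back to $M = \omega_X \otimes L^{-1}$ of multidegree $\md'$ (degree $g-2$), Lemma~\ref{lma:RR base point} shows that a general point on each component is not a base point of $L$, equivalently $h^0(X, M(p_v)) = h^0(X,M) = 0$, so $M$ witnesses semistability of $\md'$ through Lemma~\ref{lma:char deg g-2}, and hence $\md$ is semistable by Lemma~\ref{lma:semistable residual}.

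\medskip

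\noindent\textbf{Main obstacle.} The delicate point is the bookkeeping in $(2) \Rightarrow (3)$: I must check that a general member of $A_{\md}(X)$, not just the specific $L$ from~(2), has $h^0 = 1$, so that Lemma~\ref{lma:dimension Abel map} applies to conclude dominance rather than merely that one fiber is finite. The cleanest route is to verify that the hypotheses in~(2) are preserved under the identification with the degree-$(g-2)$ picture and then invoke the openness of the condition $h^0(X,M)=0$ on $\Pic^{\md'}(X)$ (as used in the proof of Lemma~\ref{lma:char deg g-2} via Theorem~\ref{thm:theta divisor}), so that the base-point-freeness at general points $p_v$ holds on a dense open locus. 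Ensuring the translation between ``sections do not vanish along components'' and ``general points are not base points of the residual'' via Lemma~\ref{lma:RR base point} is exactly where care is required, and this interplay of Riemann-Roch with the Abel map is the heart of the argument.
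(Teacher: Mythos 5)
Your implications $(1)\Rightarrow(2)$ and $(3)\Rightarrow(1)$ are correct and essentially identical to the paper's argument (the paper proves $(1)\Leftrightarrow(2)$ via Lemma~\ref{lma:char deg g-2}, Riemann--Roch and Lemma~\ref{lma:RR base point}, and proves $(3)\Rightarrow(2)$ by noting that a general member of a $g$-dimensional $A_{\md}(X)$ is $\mathcal O_X(\sum p_i)$ with $h^0=1$). The problem is your step $(2)\Rightarrow(3)$, which contains a genuine gap: you claim that the section $s$ of the line bundle $L$ furnished by $(2)$ vanishes ``by genericity, away from nodes,'' and hence that $[L]\in A_{\md}(X)$. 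But there is no genericity available -- $L$ is a single fixed line bundle, and condition $(2)$ only forbids $s$ from vanishing along entire components; it does not forbid $s$ from vanishing at a node. If $s$ vanishes at a node, then (since $h^0(X,L)=1$) the divisor of the unique section is not supported on smooth points, and $L$ does \emph{not} lie in the image of the rational Abel map. So you have not produced any point of $A_{\md}(X)$ outside $W^1_{\md}(X)$, which is exactly what is needed to apply Lemma~\ref{lma:dimension Abel map}.

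Your ``main obstacle'' paragraph does not repair this. Openness of $\left\{h^0(X,M)=0\right\}$ in $\Pic^{\md'}(X)$ shows that the locus of line bundles satisfying the conditions in $(2)$ is dense open in $\Pic^{\md}(X)$, but that says nothing about whether this locus meets the (a priori possibly small) subset $A_{\md}(X)$ -- concluding that it does because the good locus is dense would be circular, since density of $A_{\md}(X)$ is precisely the statement $(3)$ you are trying to prove. That this is a real issue, not bookkeeping, is shown by Remark~\ref{rmk:abel}: for effective but non-semistable multidegrees, $A_{\md}(X)$ can be entirely contained in $W^1_{\md}(X)$. The paper closes exactly this gap with a dimension count that your proposal omits: writing $\md = (\md - v) + v$ with $\md - v$ semistable of degree $g-1$ (Lemma~\ref{lma: g-2 to g-1}), it uses Proposition~\ref{prop:coelhoesteves} and Theorem~\ref{thm:theta divisor} to get $\dim A_{\md - v}(X) = g-1$, hence $\dim A_{\md}(X) \geq g-1$ via twisting by a point; since $\dim W^1_{\md}(X) \leq g-2$ (the residual of Proposition~\ref{prop:dimension deg g - 2}), it follows that $A_{\md}(X) \not\subset W^1_{\md}(X)$, and only then does Lemma~\ref{lma:dimension Abel map} apply. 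Some argument of this strength (using the degree $g-1$ theory, and using semistability, i.e.\ condition $(1)$ rather than just $(2)$) is needed where you wrote ``by genericity.''
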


\begin{proof}
    By Lemma~\ref{lma:semistable residual} a line bundle $L$ is semistable if and only if its residual $\omega_X \otimes L^{-1}$ is so. If $L$ has total degree $g$, then   $\omega_X \otimes L^{-1}$ has total degree $g - 2$. Thus, using Lemma~\ref{lma:char deg g-2}, $\md$ is semistable if and only if there is a line bundle $L$ of multidegree $\md$, such that $h^0\left(X, \omega_X \otimes L^{-1}\right) = 0$ and for any irreducible component $X_v$ of $X$ and general point $p_v \in X_v$, $p_v$ is a base point of $\left(\omega_X \otimes L^{-1}\right)(p_v)$. By the Riemann Roch theorem and Lemma~\ref{lma:RR base point}, these two conditions translate to condition (2) on $L$. Thus equivalence of (1) and (2) follows. 
    
    Next, we show that (3) implies (2). If the image $A_{\md}(X)$ of the rational Abel map $\alpha_{\md}$ has dimension $g$, then a general $[L]$ in $A_{\md}(X)$ satisfies $h^0(X,L) = 1$ by Lemma~\ref{lma:dimension Abel map}. By definition, such an $L$ is of the form $\cO_X\left(\sum_{i=1}^d p_i\right)$, which satisfies also the second part of condition (2), since the global sections of $\cO_X$ do not vanish along whole irreducible components of $X$.
    
    Finally, suppose that $\md$ satisfies the equivalent conditions (1) and (2). If $\deg\left(L|_{X_v}\right) < 0$ for some irreducible component $X_v$, any global section of $L$ vanishes along all of $X_v$. Thus (2) immediately implies that $\md$ is effective and thus the rational Abel map $\alpha_{\md}$ is defined. By Lemma~\ref{lma:dimension Abel map}, if a general $[L] \in A_{\md}(X)$ satisfies $h^0(X,L) = 1$, then $\dim\left(A_{\md}(X)\right) = d = g$. Since this is also the dimension of $\Pic^{\md}(X)$, it suffices to show this in order to prove (3). 
    
    Since $d > 0$ and $\md$ is effective, there exists a vertex $v$, such that $\md - v$ is still effective. Hence we can consider the corresponding rational Abel map and its image $A_{\md - v}(X) \subset \Pic^{\md - v}(X)$. Fix a smooth point $p_v \in X_v$ and consider once again the isomorphism
    \[\varphi_{p_v} \colon \Pic^{\md - v}(X) \to \Pic^{\md}(X), [L] \mapsto [L(p_v)].\] Then by construction $\varphi_{p_v}\left(A_{\md - v}(X)\right) \subset A_{\md}(X)$. Since $\md - v$ is semistable by Lemma~\ref{lma: g-2 to g-1}, it follows that $A_{\md - v}(X)$ is an irreducible component of the effective locus in $\Pic^{\md - v}(X)$ by Proposition~\ref{prop:coelhoesteves} and has dimension $g - 1$ by Theorem~\ref{thm:theta divisor}. Since $\varphi_{p_v}$ is an isomorphism, we get that $A_{\md}(X)$ has dimension at least $g - 1$. In particular, $A_{\md}(X)$ cannot be contained in the special locus, which by Proposition~\ref{prop:dimension deg g - 2} is either empty or has dimension $g - 2$. In other words, a general $[L] \in A_{\md}(X)$ satisfies $h^0(X,L) = 1$, as claimed. 
\end{proof}

\section{Effective loci for general semistable multidegrees}\label{sec:general}

Finally, we consider in this section the dimension of effective loci for semistable multidegrees of total degree $d \leq g - 2$. In Subsection~\ref{subsec:combinatorics} we lay some combinatorial groundwork, and in Subsection~\ref{subsec: general effective loci} we prove Theorem~\ref{thm:general effective loci}, the main result of this section. We conclude with some counterexamples to possible stronger claims in Subsection~\ref{subsec:further counterexamples}.

\subsection{Combinatorial considerations} \label{subsec:combinatorics}
Recall that a line bundle $L$ of total degree $d$ is by definition semistable if and only if \[g(Y) - 1 + (d - g + 1) \frac{2 g(Y) - 2 + |Y \cap Y^c|}{2g - 2} \leq \deg(L|_Y),\] for every subcurve $Y$ of $X$. Recall furthermore, that if $d = g$ and $X$ is stable, this is equivalent to requiring \[g(Y) \leq \deg(L|_Y)\] for every subcurve $Y$ of $X$, since $0 < \frac{2 g(Y) - 2 + |Y \cap Y^c|}{2g - 2} < 1$ for proper subcurves $Y$.
 
\begin{lma} \label{lma:vertex to subtract from}
Let $X$ be a stable curve and $L$ a line bundle of total degree $d > g$. Suppose $g(Y) \leq \deg(L|_Y)$ for all subcurves $Y \subset X$. Then there is a vertex $v \in V(\GG_X)$ such that for all subcurves $Y$ with $X_v \subset Y$ the strict inequality $g(Y) < \deg(L|_Y)$ holds.
\end{lma}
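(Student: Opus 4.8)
The plan is to reformulate the statement in terms of the \emph{defect} function $\delta(Y) \coloneqq \deg(L|_Y) - g(Y)$. By hypothesis $\delta(Y) \geq 0$ for every subcurve $Y \subset X$, and I will call $Y$ \emph{tight} if $\delta(Y) = 0$. Since $\deg(L|_X) = d > g = g(X)$, the total curve $X$ is not tight. The conclusion we want is precisely the existence of a vertex $v$ lying in no tight subcurve: for such a $v$, every subcurve $Y$ with $X_v \subset Y$ satisfies $\delta(Y) > 0$, i.e. $g(Y) < \deg(L|_Y)$. So the goal reduces to showing that \emph{not every} vertex of $\GG_X$ is contained in some tight subcurve.

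First I would record the combinatorial backbone, namely that $\delta$ behaves submodularly on overlapping subcurves. Degree is modular, $\deg(L|_{Y_1 \cup Y_2}) + \deg(L|_{Y_1 \cap Y_2}) = \deg(L|_{Y_1}) + \deg(L|_{Y_2})$, while the genus formula $g(Y) = 1 - |V(\GG_Y)| + |E(\GG_Y)| + \sum_v g_v$ is supermodular: one checks $g(Y_1 \cup Y_2) + g(Y_1 \cap Y_2) = g(Y_1) + g(Y_2) + e$, where $e \geq 0$ counts the edges of $\GG_X$ joining $V(\GG_{Y_1}) \setminus V(\GG_{Y_2})$ to $V(\GG_{Y_2}) \setminus V(\GG_{Y_1})$. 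Subtracting gives $\delta(Y_1 \cup Y_2) + \delta(Y_1 \cap Y_2) \leq \delta(Y_1) + \delta(Y_2)$ whenever $Y_1 \cap Y_2 \neq \emptyset$. Consequently, if two tight subcurves share a component, their union (and intersection) are again tight.

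The key technical step is a \emph{merging lemma}: if $Y_1, Y_2$ are tight and there is at least one edge of $\GG_X$ between them, or they share a component, then $Y_1 \cup Y_2$ is tight. The overlapping case is exactly the submodularity just established. The disjoint case is the step to watch, and it requires a direct computation rather than submodularity: when $V(\GG_{Y_1}) \cap V(\GG_{Y_2}) = \emptyset$ the Euler characteristic contributes an extra $+1$, giving $g(Y_1 \cup Y_2) = g(Y_1) + g(Y_2) - 1 + e(Y_1, Y_2)$, whence $\delta(Y_1 \cup Y_2) = 1 - e(Y_1, Y_2)$. Using $e(Y_1, Y_2) \geq 1$ together with the standing nonnegativity $\delta \geq 0$ pins this value to $0$, so $Y_1 \cup Y_2$ is tight. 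The main obstacle is precisely this disjoint case: without the hypothesis $\delta \geq 0$ one could not rule out $e(Y_1,Y_2) \geq 2$, which would make the union have negative defect.

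Finally I would run a maximality argument. Suppose for contradiction that every vertex of $\GG_X$ lies in a tight subcurve, and choose a tight subcurve $Y$ with the maximal number of vertices. Since $X$ is not tight, $Y \neq X$, and connectedness of $X$ produces a vertex $w \notin V(\GG_Y)$ joined to $Y$ by an edge. By assumption $w$ lies in a tight subcurve $Y'$; the edge from $w$ to $Y$ witnesses either that $Y$ and $Y'$ share a component or that they are joined by an edge, so the merging lemma shows $Y \cup Y'$ is tight. But $Y \cup Y' \supsetneq Y$ contradicts the maximality of $Y$. Hence some vertex lies in no tight subcurve, and any such $v$ is the vertex required by the statement.
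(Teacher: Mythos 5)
Your proof is correct and follows essentially the same route as the paper's: both hinge on showing that unions of ``tight'' subcurves (those with $\deg(L|_Y) = g(Y)$) remain tight, via the same genus/degree inclusion--exclusion with a count of connecting edges, and then derive a contradiction with $d > g$ from the fact that the tight subcurves would otherwise cover the connected curve $X$. The only differences are organizational -- the paper first proves tight subcurves are connected and merges via a unified computation with its $k$ and $l$ bookkeeping, then takes the union over all vertices, whereas you split the merging lemma into the overlapping case (submodularity of the defect $\delta$) and the disjoint case, and close with a maximality argument -- but these are cosmetic variations of the same argument.
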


\begin{proof}
    We first claim, that if $L$ is as in the assumptions of the lemma and $Y$ is a subcurve such that $\deg\left(L|_Y\right) = g(Y)$, then $Y$ needs to be connected. Indeed, if to the contrary $Y = Y_1 \sqcup Y_2$, then $g(Y) = g(Y_1) + g(Y_2) - 1$. Since $\deg\left(L|_Y\right) = \deg\left(L|_{Y_1}\right) + \deg\left(L|_{Y_2}\right)$ we thus need to have $\deg\left(L|_{Y_1}\right) < g(Y_1)$ or $\deg\left(L|_{Y_2}\right) < g(Y_2)$. But this contradicts the assumption on $L$.
    
    Now let $Y, Z \subset X$ be two connected subcurves such that $\deg\left(L|_Y\right) = g(Y)$ and $\deg\left(L|_Z\right) = g(Z)$. Assume furthermore, that $Y \cup Z$ is connected, as well. Notice that both $Y$ and $Z$ need to be proper subcurves of $X$, since $d > g$.
     We claim, that we then need to also have $\deg\left(L|_{Y \cup Z}\right) = g(Y \cup Z)$. 
     By assumption, we have $\deg\left(L|_{Y \cup Z}\right) \geq g(Y \cup Z)$ and we need to show the other inequality.
     
    On the one hand, we have
    \begin{equation} \label{eq:degree union}
       \deg\left(L|_{Y \cup Z}\right) = g(Y) + g(Z) - \deg\left(L|_{Y \cap Z}\right),
    \end{equation}
    where we set $\deg\left(L|_{Y \cap Z}\right) = 0$ if $Y \cap Z$ consists only of nodes of $X$.
    Write $k$ for the number of edges in $\GG_X$ that are adjacent to both dual graphs $\GG_Y$ and $\GG_Z$ of $Y$ and $Z$, but contained in neither. Then one checks using the definition of genus \eqref{eq:def genus}, that we have on the other hand
    \begin{equation}\label{eq: adding genera}
        g(Y) + g(Z) = g(Y \cup Z) + 1 - k +  g(Y \cap Z) - l,
    \end{equation}
    where $l = 0$ if $Y \cap Z$ consists only of nodes, in which case we set $g(Y \cap Z) = 0$, and $l = 1$ otherwise. 
    Substituting \eqref{eq: adding genera} in \eqref{eq:degree union} gives
    \begin{eqnarray*}
     \deg\left(L|_{Y \cup Z}\right) &=&  g(Y \cup Z) + 1 - k + g(Y \cap Z) - l - \deg\left(L|_{Y \cap Z}\right)\\
     &\leq& g(Y \cup Z) + 1 - k - l,
    \end{eqnarray*}
    where we used that by assumption $\deg\left(L|_{Y \cap Z}\right) \geq g(Y \cap Z)$. Since $Y \cup Z$ is connected, not both $k$ and $l$ can be $0$, and hence  $\deg\left(L|_{Y \cup Z}\right)
     \leq g(Y \cup Z)$, as claimed.
    
    To conclude assume to the contrary, that for every vertex $v$ there is a subcurve $Y \subset X$ containing $X_v$ and such that $\deg\left(L|_{Y}\right) = g(Y)$. Then the union of all such subcurves for varying $v$ covers $X$. Applying what we showed above to this union implies $d = g$, a contradiction. 
\end{proof}

Recall that a multidegree $\un e$ is called effective, if it is non-negative on each irreducible component of $X$.

\begin{lma} \label{lma: semistable minus effective}
Let $X$ be a stable curve and $\ud$ a semistable multidegree of total degree $d \geq g$. Then there are effective multidegrees $\un e$  and $\un e'$ such that $\ud - \un e$ and $\ud - \un e'$ are semistable of total degree $g$ and $g-1$, respectively.
\end{lma}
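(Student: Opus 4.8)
The plan is to descend from $\ud$ to total degree $g$ by peeling off one vertex at a time via Lemma~\ref{lma:vertex to subtract from}, and then to drop a single further degree using Lemma~\ref{lma: g-2 to g-1}.

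First I would record that a semistable multidegree $\ud$ of total degree $d \geq g$ satisfies the clean inequality $g(Y) \leq \deg(\ud|_Y)$ for every subcurve $Y \subset X$; this is exactly the hypothesis of Lemma~\ref{lma:vertex to subtract from}. For $Y = X$ the defining inequality of Definition~\ref{def:semistable} is the equality $d \leq d$, and $Y = \emptyset$ is vacuous. For a proper nonempty $Y$, Definition~\ref{def:semistable} gives $\deg(\ud|_Y) \geq g(Y) - 1 + (d - g + 1)\,t_Y$ with $t_Y \coloneqq \frac{2g(Y) - 2 + |Y \cap Y^c|}{2g - 2} \in (0,1)$, the range coming from ampleness of $\omega_X$ on the stable curve $X$. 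Since $d \geq g$, the term $(d - g + 1)\,t_Y$ is strictly positive, so $\deg(\ud|_Y) > g(Y) - 1$, and as $\deg(\ud|_Y) \in \ZZ$ this forces $\deg(\ud|_Y) \geq g(Y)$.

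Next I would run the descent to build $\un e$. If $d = g$ I set $\un e = 0$; the reformulation of semistability in total degree $g$ recalled at the start of this section then says $\ud$ is already semistable of total degree $g$. If $d > g$, Lemma~\ref{lma:vertex to subtract from} yields a vertex $v$ with $g(Y) < \deg(\ud|_Y)$ for every $Y \supseteq X_v$. I then check that $\ud - v$ again satisfies $g(Y) \leq \deg\big((\ud - v)|_Y\big)$ for all $Y$: for $Y \supseteq X_v$ the strict inequality together with integrality gives $g(Y) \leq \deg(\ud|_Y) - 1$, while for $Y \not\supseteq X_v$ the restriction is unchanged. Hence $\ud - v$ has total degree $d - 1 \geq g$ and meets the same hypothesis, so the argument iterates. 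After $d - g$ steps I reach $\ud - \un e$ of total degree $g$ satisfying $g(Y) \leq \deg(\cdot|_Y)$ for all $Y$, hence semistable, where $\un e$ is the sum of the $d - g$ subtracted vertices and is effective as a sum of standard-basis multidegrees.

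The degree $g - 1$ claim then follows at once: since $\ud - \un e$ is semistable of total degree $g$, Lemma~\ref{lma: g-2 to g-1}~(2) shows that $(\ud - \un e) - v$ is semistable of total degree $g - 1$ for every vertex $v$, so $\un e' \coloneqq \un e + v$ works. I expect the only delicate step to be the reduction from the weighted semistability inequality to the clean bound $g(Y) \leq \deg(\ud|_Y)$, which relies on combining strict positivity of $(d - g + 1)\,t_Y$ with integrality of degrees; once this is in place, the construction is a routine induction feeding Lemma~\ref{lma:vertex to subtract from} into itself, and the final degree drop is immediate.
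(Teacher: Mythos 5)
Your proof is correct and follows essentially the same route as the paper: deduce the clean inequality $g(Y) \leq \deg(\ud|_Y)$ from semistability (via positivity of the $\omega_X$-term and integrality), iterate Lemma~\ref{lma:vertex to subtract from} to peel off one vertex at a time down to total degree $g$, and then apply Lemma~\ref{lma: g-2 to g-1} to drop one further degree. The only difference is cosmetic -- you phrase the descent purely in terms of multidegrees and integrality, while the paper phrases it via a line bundle $L$ and twisting down by a smooth point $p \in X_v$.
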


\begin{proof}
We show the existence of $\un e$, that is, an effective multidegree such that $\ud - \un e$ is semistable of total degree $g$. Then $\md - \un e - v$ is semistable of degree $g-1$ by Lemma~\ref{lma: g-2 to g-1} for any vertex $v$ and thus we obtain also $\un e' \coloneqq \un e + v$ as claimed.
If $d = g$, there is nothing to show, so assume $d > g$ and let $L$ be a line bundle of multidegree $\md$. 

Since $\md$ is semistable, we have by definition that for all subcurves $Y \subset X$
\[
g(Y) - 1 +  (d - g + 1) \frac{2 g(Y) - 2 + |Y \cap Y^c|}{2g - 2} \leq \deg(L|_Y).
\]
By assumption we have $d - g + 1 > 1$ and since $X$ is stable, we have $0 < \frac{2 g(Y) - 2 + |Y \cap Y^c|}{2g - 2}$. In particular, we have for any proper subcurve $Y \subset X$
\[
g(Y) \leq \deg(L|_Y).
\]

Thus we can apply Lemma~\ref{lma:vertex to subtract from} and there is a vertex $v \in V(\GG_X)$ such that $g(Y) < \deg(L|_Y)$ whenever $X_v \subset Y$. Thus for every proper subcurve $Y$ of $X$ and smooth point $p \in X_v$ we have
\begin{equation} \label{eq:deg minus v}
    g(Y) \leq \deg\left(L|_Y(-p)\right).
\end{equation}

If $d = g + 1$, then $\md - v$ has total degree $g$ and Equation~\eqref{eq:deg minus v} gives that $\md - v$ is semistable. Otherwise, \eqref{eq:deg minus v} ensures that $\md - v$ again satisfies the assumption of Lemma~\ref{lma:vertex to subtract from}. We obtain $w$ such that $\md - v - w$ satisfies an inequality as in \eqref{eq:deg minus v}. Repeating this procedure $g - d$ times gives an effective multidegree $\un e \coloneqq v + w + \ldots$ such that $\md - \un e$ is semistable of total degree $g$, as claimed. 
\end{proof}

Since semistability is preserved in passing to residuals by Lemma~\ref{lma:semistable residual}, Lemma~\ref{lma: semistable minus effective} gives an analogous statement in degrees $d \leq g-2$. We will later use this version, and hence state it next.

\begin{lma} \label{lma: semistable plus effective}  Let $X$ be a stable curve and $\ud$ a semistable multidegree of total degree $d \leq g - 2$. Then there are effective multidegrees $\un e$  and $\un e'$ such that $\ud + \un e$ and $\ud + \un e'$ are semistable of total degree $g - 2$ and $g-1$, respectively. 
\end{lma}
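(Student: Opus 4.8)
The plan is to obtain this statement from Lemma~\ref{lma: semistable minus effective} by passing to residuals, as indicated in the sentence preceding the statement. The engine is the residual involution on multidegrees, $\ud \mapsto \ud^{*} \coloneqq \mdeg(\omega_X) - \ud$. Two properties of this involution are all I need: by Lemma~\ref{lma:semistable residual} it preserves semistability, and since $\mdeg(\omega_X)$ has total degree $2g - 2$, it sends a multidegree of total degree $d$ to one of total degree $2g - 2 - d$. In particular it interchanges the degree ranges $d \leq g - 2$ and $d \geq g$, and it fixes the value $g - 1$.

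Given $\ud$ semistable of total degree $d \leq g - 2$, I would first form its residual $\ud^{*}$, which is semistable of total degree $2g - 2 - d \geq g$. This places $\ud^{*}$ exactly in the regime covered by Lemma~\ref{lma: semistable minus effective}. Applying that lemma to $\ud^{*}$ produces effective multidegrees $\un e$ and $\un e'$ such that $\ud^{*} - \un e$ is semistable of total degree $g$ and $\ud^{*} - \un e'$ is semistable of total degree $g - 1$.

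Finally I would take residuals a second time and read off the conclusion. Since residuation is an involution, $(\ud^{*} - \un e)^{*} = \mdeg(\omega_X) - \ud^{*} + \un e = \ud + \un e$, and likewise $(\ud^{*} - \un e')^{*} = \ud + \un e'$. By Lemma~\ref{lma:semistable residual} these residuals are again semistable, and the degree bookkeeping gives $\ud + \un e$ total degree $2g - 2 - g = g - 2$ and $\ud + \un e'$ total degree $2g - 2 - (g - 1) = g - 1$, as required. Crucially, the multidegrees $\un e$ and $\un e'$ supplied by Lemma~\ref{lma: semistable minus effective} are already effective, and the involution is applied to the whole shifted multidegree rather than to $\un e$ itself, so the same $\un e, \un e'$ serve here. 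There is essentially no genuine obstacle in this argument: it is pure bookkeeping, and the only point to verify with care is that the two residual passes and the total-degree arithmetic line up, which they do precisely because residuation is a degree-reversing involution fixing $g - 1$. All of the real work is carried out by Lemma~\ref{lma: semistable minus effective}.
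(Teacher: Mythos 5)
Your proposal is correct and is exactly the paper's own argument: the paper states this lemma as an immediate consequence of Lemma~\ref{lma: semistable minus effective} via the residual involution $\ud \mapsto \mdeg(\omega_X) - \ud$, using Lemma~\ref{lma:semistable residual} to preserve semistability, just as you do. Your write-up simply makes the degree bookkeeping explicit, and it is accurate throughout.
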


The following example shows, that a claim analogous to Lemma~\ref{lma: semistable plus effective} does not hold for arbitrary total degrees. More precisely, it shows that for total degrees $d < d' < g-2$ and a semistable multidegree $\md$ of total degree $d$, there need not be an effective multidegree  $\un e$ of total degree $d' - d$ such that $\md + \un e$ is semistable.

\begin{ex} \label{ex:semistable plus effective}
Let $\GG_X$ be the graph with three vertices $v_1, v_2, v_3$ with two edges between $v_1$ and $v_2$ and two edges between $v_2$ and $v_3$. Let the weight of the vertices be $(2,1,2)$ and consider the multidegree $\md = (0, 3, 0)$. Then the total degree is $3 = g - 4$ and one checks that $\md$ is semistable. But $\md + v$ is not semistable for any vertex $v$ of $\GG_X$. Indeed, a semistable multidegree on $\GG_X$ of total degree $4$ needs to have value $1$ or $2$ on both $v_1$ and $v_3$.
\end{ex}

\subsection{Effective loci for semistable multidegrees} \label{subsec: general effective loci}

We are now ready to prove the main statement of this section. Recall that for a multidegree $\md$ we denote by $W_{\md}(X)$ the effective locus in $\Pic^{\md}(X)$, that is, the locus of line bundles $L$ with multidegree $\md$ and $h^0(X,L) \geq 1$. Its expected dimension is $d$, but it may very well be empty or of larger dimension.
Recall furthermore, that the multidegree $\md$ itself is called effective, if it is non-negative on each irreducible component.

\begin{thm} \label{thm:general effective loci}
Let $X$ be a stable curve and $\md$ a semistable multidegree of total degree $d \leq g - 2$. Then each irreducible component of $W_{\md}(X)$ has dimension at most $g - 2$.
If $\md$ is in addition effective, then the effective locus  $W_{\md}(X)$ contains an irreducible component of dimension $d$. 
\end{thm}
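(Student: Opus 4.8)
The plan is to treat the two assertions separately, transporting each via Lemma~\ref{lma: semistable plus effective} to the already-understood total degrees $g-2$ and $g-1$.

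For the first assertion I may assume $W_{\md}(X) \neq \emptyset$, the statement being vacuous otherwise. By Lemma~\ref{lma: semistable plus effective} there is an effective multidegree $\un e$ of total degree $g-2-d$ with $\md + \un e$ semistable of total degree $g-2$. I would realize $\un e$ by a divisor $D$ of smooth points, $\un e_v$ of them on each $X_v$, and use the isomorphism $\varphi_D \colon \Pic^{\md}(X) \to \Pic^{\md + \un e}(X)$, $[L] \mapsto [L(D)]$. Since $L(D)$ is effective whenever $L$ is, $\varphi_D$ maps $W_{\md}(X)$ into $W_{\md + \un e}(X)$. By Proposition~\ref{prop:dimension deg g - 2} the target is empty or pure of dimension $g-2$; as it contains the nonempty image it is pure of dimension $g-2$, and since $\varphi_D$ is an isomorphism every irreducible component of $W_{\md}(X)$ has dimension at most $g-2$.

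For the second assertion I claim the image $A_{\md}(X)$ of the rational Abel map is an irreducible component of $W_{\md}(X)$ of dimension $d$, and I would establish this in two steps. To see $\dim A_{\md}(X) = d$, apply Lemma~\ref{lma: semistable plus effective} again to get an effective $\un e'$ of total degree $g-1-d$ with $\md + \un e'$ semistable and effective of total degree $g-1$. By Proposition~\ref{prop:coelhoesteves} with $Y = X$ (the empty cut is vacuously directed towards $\GG_X$, and the restricted multidegree $\md + \un e'$ is effective) the set $A_{\md + \un e'}(X)$ is an irreducible component of the Theta divisor, hence irreducible of dimension $g-1$. Its Abel map $\alpha_{\md + \un e'}$ is then a dominant map between irreducible varieties of equal dimension $g-1$, so it is generically finite. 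Writing its source as $\prod_{v} (X_v^{\mathrm{sm}})^{\md_v} \times \prod_{v} (X_v^{\mathrm{sm}})^{\un e'_v}$ and fixing a general second coordinate $D'$, the restriction of $\alpha_{\md + \un e'}$ equals $\alpha_{\md}$ composed with the isomorphism $[L] \mapsto [L(D')]$; a general fiber of $\alpha_{\md + \un e'}$ contains such a restricted fiber, so the corresponding fiber of $\alpha_{\md}$ is finite and $\alpha_{\md}$ is generically finite. Thus $\dim A_{\md}(X) = d$, and by Lemma~\ref{lma:dimension Abel map} a general $[L] \in A_{\md}(X)$ satisfies $h^0(X,L) = 1$.

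Second, to promote $A_{\md}(X)$ to an irreducible component I would show $W_{\md}(X)$ is smooth of dimension $d$ at such a general $[L] = [\mathcal O_X(\sum p_i)]$. Here the unique global section $s$ vanishes only at the finitely many smooth points $p_i$, hence is a non-zero-divisor. In the determinantal description of Proposition~\ref{prop:determinantal} the Zariski tangent space to $W_{\md}(X)$ at a point with $h^0 = 1$ is the kernel of the cup-product map $H^1(X, \mathcal O_X) \to H^1(X, L)$, $\eta \mapsto \eta \cup s$; by Serre duality on the Gorenstein curve $X$ this map is surjective precisely when multiplication by $s$, namely $H^0(X, \omega_X \otimes L^{-1}) \to H^0(X, \omega_X)$, is injective, which holds since $s$ is a non-zero-divisor. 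Thus the tangent space has dimension $g - (g-d) = d$, so $W_{\md}(X)$ is smooth of dimension $d$ at $[L]$; the component through $[L]$ then has dimension $d$, and, containing the irreducible $d$-dimensional set $A_{\md}(X)$, coincides with it.

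The main obstacle is this last step. The upper bound and the count $\dim A_{\md}(X) = d$ follow fairly formally from the degree $g-2$ and $g-1$ results, but upgrading ``$A_{\md}(X)$ has the expected dimension'' to ``$A_{\md}(X)$ is a genuine irreducible component'' requires ruling out that it sits inside a larger component of $W_{\md}(X)$, whose dimension could a priori be anything up to $g-2$. Carrying out the tangent-space computation carefully on the nodal curve $X$ -- in particular justifying the Serre-duality identification of the tangent space to the degeneracy locus, together with the non-zero-divisor input -- is where the real work lies. Should that prove delicate, an alternative is to analyze the open locus $\{h^0 = 1\}$ of $W_{\md}(X)$ directly, showing that every line bundle there whose section has reduced zero divisor supported at smooth points of the correct multidegree lies in $A_{\md}(X)$, and to combine this with the generic finiteness established above.
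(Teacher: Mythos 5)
Your proof of the first claim coincides with the paper's: both twist by an effective divisor supplied by Lemma~\ref{lma: semistable plus effective} to land in a semistable multidegree of total degree $g-2$, and then quote Proposition~\ref{prop:dimension deg g - 2}. For the second claim you take a genuinely different route. The paper, like you, passes to a semistable effective $\md'' = \md + \un e'$ of total degree $g-1$ and uses Proposition~\ref{prop:coelhoesteves} to see that $A_{\md''}(X)$ is an irreducible component of the Theta divisor of dimension $g-1$; but from there it argues non-infinitesimally: it computes $\dim A_{\un e'}(X) = g-1-d$ via Lemmas~\ref{lma:coelhoesteves} and~\ref{lma:dimension Abel map}, and then bounds the dimension of the component $W \supseteq A_{\md}(X)$ by intersecting, inside the $(g-1)$-dimensional $A_{\md''}(X)$, the two slices of the tensoring map $([L],[M]) \mapsto [L \otimes M]$ obtained by freezing one factor, concluding $\dim(W) \le d$ from the intersection-dimension estimate. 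You instead (i) obtain $\dim A_{\md}(X) = d$ by slicing the generically finite $\alpha_{\md''}$ -- a correct and arguably cleaner argument, which needs only the dimension statement of Proposition~\ref{prop:coelhoesteves} and not the finer information of Lemma~\ref{lma:coelhoesteves} -- and (ii) replace the paper's intersection argument by a Zariski tangent space computation at a general $[L] = \left[\mathcal O_X\left(\sum p_i\right)\right] \in A_{\md}(X)$ with $h^0(X,L)=1$. Step (ii), which you rightly flag as the technical heart, does go through on a nodal curve: $X$ is Gorenstein, so Serre duality identifies the dual of $\cup s \colon H^1(X,\mathcal O_X) \to H^1(X,L)$ with multiplication by the non-zero-divisor $s$ on $H^0\left(X, \omega_X \otimes L^{-1}\right)$; $\Pic^{\md}(X)$ is smooth with tangent space $H^1(X,\mathcal O_X)$; and the tangent space to the degeneracy locus of Proposition~\ref{prop:determinantal} at a point of corank exactly one is the preimage, under the derivative of $u$, of the tangent space to the rank stratum in matrix space, with that derivative identified with cup product by the same formal computation as for smooth curves (the auxiliary divisor $D$ in the construction is supported at smooth points). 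The trade-off: your argument imports Petri-type deformation theory but yields a slightly stronger conclusion, namely that the determinantal scheme structure on $W_{\md}(X)$ is smooth of dimension $d$ at a general point of $A_{\md}(X)$, whereas the paper's argument stays entirely at the level of dimension counts of Abel-map images and the combinatorial description of Theta divisor components.
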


\begin{proof}
    Let $\md$ be a semistable multidegree of total degree $d \leq g - 2$. If $d < 0$, then the residual $\omega_X \otimes  L^{-1}$ has total degree greater than $2g - 2$ and is still semistable by Lemma~\ref{lma:semistable residual}. Hence $\omega_X \otimes L^{-1}$ is non-special by \cite[Theorem 2.3]{caporasosemistable} and thus $h^0(X,L) = 0$ by Lemma~\ref{lma:special residual}. So in this case $W_{\md}(X) = \emptyset$. Since a multidegree of negative total degree cannot be effective, this gives the claim.
    
    Assume now $d \geq 0$.
    By Lemma~\ref{lma: semistable plus effective}, there is a semistable multidegree $\md'$ of total degree $g - 2$ such that $\un e \coloneqq \md' - \md$ is effective. Fix a line bundle $[L_{\un e}] \in A_{\un e}(X)$ of multidegree $\un e$, where $A_{\un e}(X)$ denotes the image of the rational Abel map. Recall from Subsection~\ref{subsec:abel}, that this means $L_{\un e} = \cO_X\left(p_1 + \dots + p_{g - 2 - d}\right)$ for a collection of smooth points $p_i$, with $\un e_v$ of them contained in an irreducible component $X_v$ of $X$. Consider the isomorphism
    \[
    \varphi\colon \Pic^{\md}(X) \to \Pic^{\md'}(X), \; [L] \mapsto \left[L \otimes L_{\un e}\right].
    \]
    That is, $\varphi$ maps $L$ to $L\left(p_1 + \dots + p_{g - 2 - d}\right)$. In particular, $h^0(X, L) \leq h^0\left(X, L\left(p_1 + \dots + p_{g - 2 - d}\right)\right)$ and thus $\varphi(W_{\md}(X)) \subset W_{\md'}(X)$. Since $\md'$ is semistable of total degree $g-2$, $W_{\md'}(X)$ is either empty or of pure dimension $g - 2$  by Proposition~\ref{prop:dimension deg g - 2}. Hence the first claim follows. 
     
    For the second claim, we assume $\md$ is effective and thus we can consider the rational Abel map for $\md$. Its image $A_{\md}(X) \subset \Pic^{\md}(X)$ is irreducible and contained in an irreducible component $W$ of $W_{\md}(X)$.
    By Lemma~\ref{lma: semistable plus effective}, there is a semistable multidegree $\md''$, this time of total degree $g - 1$, such that $\un e' \coloneqq \md'' - \md$ is effective. Consider the map 
    \[
    \phi \colon W \times A_{\un e'}(X) \to V, \; ([L], [L_{\un e'}]) \mapsto [L \otimes L_{\un e'}].
    \]
    Here $V$ is an irreducible component of the effective locus $W_{\md''}(X)$ containing the image of $\phi$. Since $A_{\md}(X) \subset W$, we have $A_{\md''}(X) \subset V$ (note that $\md''$ is effective since $\md$ is). Thus $A_{\md''}(X) = V$ and $\dim(V) = g - 1$ by Proposition~\ref{prop:coelhoesteves}, since $\md''$ is semistable.
    In particular, the restriction of $\phi$ to $A_{\md}(X) \times A_{\un e'}(X)$ is surjective onto $V$.
    
    We claim that $\dim\left(A_{\un e'}(X)\right) = g - 1 - d$. Indeed, a general line bundle $[L''] \in A_{\md''}(X)$ satisfies $h^0(X, L'') = 1$ by Lemma~\ref{lma:coelhoesteves}. Since $h^0(X, L_{\un e'}) \leq h^0(X, L_{\un e'} \otimes L)$ for line bundles $[L_{\un e'}] \in A_{\un e'}(X)$ and $[L] \in A_{\md}(X)$, and the restriction of $\phi$ to $A_{\md}(X) \times A_{\un e'}(X)$ is surjective onto $A_{\md''}(X)$, it follows that $h^0(X, L_{\un e'}) = 1$ for a general line bundle $[L_{\un e'}] \in A_{\un e'}(X)$. Thus $\dim\left(A_{\un e'}(X)\right)$ equals the total degree of $\un e'$, $g - 1 - d$, by Lemma~\ref{lma:dimension Abel map}. 
    
    To finish the argument, choose $[L] \in W$ and $[L_{\un e'}] \in A_{\un e'}(X)$. The restrictions of $\phi$ to $\left\{[L]\right\} \times A_{\un e'}(X)$ and $W \times \left\{[L_{\un e'}]\right\}$ are injective. Denote by $W_{L}$ and $W_{L_{\un e'}}$ the respective images. By injectivity, we have $W_{L} \cap W_{L_{\un e'}} = \left \{[L \otimes L_{\un e'}] \right \}$. Combining these observations, we can use the usual estimate for the dimension of the intersection (see, for example, \cite[Proposition I.7.1]{Hartshorne}) to obtain \[0 = \dim\left(W_{L} \cap W_{L_{\un e'}}\right) \geq \dim(W) + \dim\left(A_{\un e'}\right) - \dim\left(A_{\md''}\right) = \dim(W) - d.\] Thus $\dim(W) \leq d$. On the other hand, $W$ is not empty, and hence $\dim(W) \geq d$ by Proposition~\ref{prop:determinantal}.
\end{proof}

\subsection{Further counterexamples} \label{subsec:further counterexamples}
We conclude with two examples, that exclude some possible strengthenings of Theorem~\ref{thm:general effective loci}. 

First, we give an example, where $\md$ is semistable, but the effective locus is irreducible and of dimension greater than the expected dimension. In particular, requiring that $\md$ is effective for the second claim in Theorem~\ref{thm:general effective loci} is necessary.

\begin{ex}\label{ex: no cpt of expected dim}
Let $\GG_X$ be the graph with two vertices $v_1$ and $v_2$, of respective weights $1$ and $5$, and three edges between them. Consider the multidegree $\md = (-1, 3)$, which one checks to be semistable (in fact, stable). Denote by $X_1$ and $X_2$ the irreducible components of $X$, corresponding to $v_1$ and $v_2$, respectively. The effective locus $W_{\md}(X)$ is then given as follows: any choice for $L|_{X_1}$, any choice of gluing data, and $L|_{X_2} = \mathcal O_{X_2}(X_1 \cap X_2)$. Thus $W_{\md}(X)$ is irreducible of dimension $3$, whereas  the total degree of $\md$ is $2$.
\end{ex}

Finally, we give an example where the multidegree $\md$ is semistable and effective, but $W_{\md}(X)$ has a component of dimension greater than the expected dimension. In particular, in the second claim of Theorem~\ref{thm:general effective loci} not all components need to be of expected dimension.

\begin{ex} \label{ex: g-3}
Let $\GG_X$ be the graph with two vertices $v_1$ and $v_2$, of respective weight $3$ and  $4$, and a single edge between them. Hence $g = 7$. Consider the multidegree $\md = (2, 2)$, which is effective and semistable (in fact, stable) of total degree $4 = g - 3$. Denote by $X_1$ and $X_2$ the irreducible components of $X$, corresponding to $v_1$ and $v_2$, respectively. Set $p = X_1 \cap X_2$. The effective locus $W_{\md}(X)$ has three irreducible components in this case: the first is the closure of $A_{\md}(X)$, which has dimension $4$, equal to the expected dimension. The second is $\Pic^2(X_1) \times \left \{\mathcal O_{X_2}(p + q)| q \in X_2 \right\}$, which again has dimension $4$. Finally, the third is $\left \{\mathcal O_{X_1}(p + q)| q \in X_1 \right\} \times \Pic^2(X_2)$ which has dimension $5 = g - 2$. 
\end{ex}

\providecommand{\bysame}{\leavevmode\hbox to3em{\hrulefill}\thinspace}
\providecommand{\MR}{\relax\ifhmode\unskip\space\fi MR }
\providecommand{\MRhref}[2]{%
  \href{http://www.ams.org/mathscinet-getitem?mr=#1}{#2}
}
\providecommand{\href}[2]{#2}

\end{document}